\documentclass[12pt, a4paper]{amsart}

\usepackage[hmargin=30mm, vmargin=25mm, includefoot, twoside]{geometry}
\usepackage[bookmarksopen=true]{hyperref}

\usepackage{amscd}
\usepackage{amsfonts,amssymb,verbatim}
\usepackage{latexsym}
\usepackage{mathrsfs}
\usepackage{stmaryrd}
\usepackage{xspace}
\usepackage{enumerate, paralist}
\usepackage{graphicx}
\usepackage[all]{xy}
\usepackage{extarrows}
\usepackage{array}

\usepackage[usenames,dvipsnames]{color}

\usepackage{txfonts, pxfonts}

\usepackage{amsthm}
\usepackage{amsmath}

\usepackage{todonotes}
\setlength{\marginparwidth}{2.5 cm}

\newtheorem{thm}{Theorem}[section]
 \newtheorem{cor}[thm]{Corollary}
 \newtheorem{lem}[thm]{Lemma}
 \newtheorem{prop}[thm]{Proposition}

\newtheorem{alphthm}{Theorem}			%letter numbering

\newtheorem{alphcor}[alphthm]{Corollary}

 \theoremstyle{definition}
  \newtheorem{defn}[thm]{Definition}
  
  \newtheorem{question}[thm]{Question}

			%letter numbering

			%letter numbering

 \theoremstyle{remark}
 \newtheorem{rem}[thm]{Remark}
  \newtheorem{ex}[thm]{Example}

\newtheorem*{claim*}{Claim}

\def\NN{\mathbb{N}}

\def\CC{\mathbb{C}}

\def\U{\mathcal{U}}
\def\V{\mathcal{V}}

\def\Nd{\mathcal{N}}
\def\F{\mathcal{F}}
\def\M{\mathcal{M}}

\def\B{\mathfrak{B}}

\def\Fin{\mathrm{Fin}}
\def\Ord{\mathrm{Ord}}
\def\diam{\mathrm{diam}}
\def\supp{\mathrm{supp}}
\def\nudim{\mathrm{nudim}}
\def\trnudim{\mathrm{trnudim}}
\def\asdim{\mathrm{asdim}}
\def\ppg{\mathrm{prop}}
\def\trasdim{\mathrm{trasdim}}
\def\Id{\mathrm{Id}}
\def\dim{\mathrm{dim}}

\def\trdim{\mathrm{trdim}}

\begin{document}

\title{Transfinite Extension of Nuclear Dimension}

\author{Jingming Zhu and Jiawen Zhang}

\address[J. Zhu]{College of Data Science, Jiaxing University, Jiaxing, 314001, P.R.China.}
\email{jingmingzhu@zjxu.edu.cn}

\address[J. Zhang]{School of Mathematical Sciences, Fudan University, 220 Handan Road, Shanghai, 200433, China.}
\email{jiawenzhang@fudan.edu.cn}

\date{}

\thanks{J. Zhu was supported by the National Natural Science Foundation of China under Grant No.12071183. J. Zhang was supported by National Key R{\&}D Program of China 2022YFA100700.}

\begin{abstract}
In this paper, we introduce a notion of transfinite nuclear dimension for $C^*$-algebras, which coincides with the nuclear dimension when taking values in natural numbers. We use it to characterise a stronger form of having nuclear dimension at most $\omega$ and moreover, we show that the transfinite nuclear dimension of a uniform Roe algebra is bounded by the transfinite asymptotic dimension of the underlying space. Hence we obtain that the uniform Roe algebra for spaces with asymptotic property C has the corona factorisation property.
\end{abstract}

\date{\today}
\maketitle

\parskip 4pt

%\textit{Mathematics Subject Classification} (2020): 20F65, 20F69, 51F30, 43A07.

\textit{Keywords: Nuclear dimension, Transfinite nuclear dimension, (Transfinite) asymptotic dimension, Corona factorisation property.}

\section{Introduction}\label{sec:intro}

The notion of nuclear dimension was introduced by Winter and Zacharias in \cite{WZ10} as a non-commutative analogue of the topological covering dimension for nuclear $C^*$-algebras. It plays an important role in the classification of $C^*$-algebras, as the class of simple separable unital $C^*$-algebras which have finite nuclear dimension and satisfy the Universal Coefficient Theorem (UCT) can be classified by the Elliott invariant (see \cite{TWW17} and the references therein). For simple nuclear $C^*$-algebras, the Toms-Winter conjecture (\cite{ET08, Win10}) predicts that having finite nuclear dimension is equivalent to $\mathscr{Z}$-stability, which was proved in \cite{CETWW21, Win12} and has striking applications.

An important class of non-commutative $C^*$-algebras was introduced by J. Roe in his pioneering work \cite{Roe88, Roe96} on higher index theory. More precisely, given a discrete metric space $(X,d)$ of bounded geometry we can construct a $C^*$-algebra $C^*_u(X)$ (see Section \ref{sec:unif. Roe} for precise definitions), called the uniform Roe algebra of $X$, whose $K$-theory encodes the information of higher indices. It turns out (see \cite{STY02}) that the uniform Roe algebra $C^*_u(X)$ being nuclear is equivalent to the underlying space $X$ having property A in the sense of Yu (\cite{Yu00}).

Moreover, it was proved in \cite[Theorem 8.5]{WZ10} that the nuclear dimension of the uniform Roe algebra $C^*_u(X)$ is bounded by the asymptotic dimension of the space $X$. Here the notion of asymptotic dimension was introduced by Gromov in \cite{Gro93} as a coarse analogue of the topological covering dimension, which plays a key role in \cite{Yu98} to attack the coarse Baum-Connes conjecture.

Later Radul extended the codomain of the asymptotic dimension from natural numbers to ordinal numbers, and introduced a transfinite extension for the asymptotic dimension (``trasdim'' for abbreviation) in \cite{Rad10}. This transfinite extension classifies the metric spaces with the asymptotic property C introduced by Dranishnikov \cite{Dra00} (see Section \ref{sec:unif. Roe} for a precise definition) and attracts more and more attentions recently \cite{WZ21, WZR22, ZW20}.

In this paper, we study a transfinite extension for nuclear dimension and introduce a notion of transfinite nuclear dimension (``trnudim'' for abbreviation) for $C^*$-algebras. More precisely for each ordinal number $\alpha$, we define ``$\trnudim~A \leq \alpha$'' for a $C^*$-algebra $A$ (see Definition \ref{defn:trnudim}). We show that this indeed generalises the original notion of nuclear dimension as follows:

\begin{alphthm}\label{introthm:trnudim and nudim}
For a $C^*$-algebra $A$ and $n\in \NN$, we have $\trnudim~A \leq n$ \emph{if and only if} $\nudim~A \leq n$.
\end{alphthm}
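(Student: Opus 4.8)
The plan is to prove the two implications separately, exploiting the fact that for a natural number $n$ the ordinal rank appearing in Definition \ref{defn:trnudim} is finite and can therefore be analysed by ordinary induction on $n$. Throughout I will use the Winter--Zacharias formulation in which $\nudim A \le n$ means that for every finite $F \subseteq A$ and every $\epsilon > 0$ there are a finite-dimensional algebra $B = B^{(0)} \oplus \cdots \oplus B^{(n)}$ and completely positive maps $\psi \colon A \to B$ and $\phi \colon B \to A$ such that $\psi$ is contractive, each restriction $\phi|_{B^{(i)}}$ is c.p.c.\ order zero, and $\|\phi\psi(a) - a\| < \epsilon$ for all $a \in F$. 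The transfinite notion instead records, via the ordinal-valued function $\Ord$ on the tree of partial approximations, how many order-zero ``colours'' must be peeled off one at a time; the content of the theorem is that a finite ordinal budget $n$ is precisely the budget for $n+1$ simultaneous colours.

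For the direction $\nudim A \le n \Rightarrow \trnudim A \le n$ I would argue by induction on $n$. The base case $n = 0$ records that a single c.p.c.\ order-zero approximation turns the associated tree into a single edge, hence of rank $0$. For the inductive step, given an $(n+1)$-colour approximation $(\psi, \phi)$ for a prescribed pair $(F,\epsilon)$, I would peel off the last summand and pass to the residual: the identity $\|\sum_{i=0}^{n-1}\phi|_{B^{(i)}}\psi(a) - (a - \phi|_{B^{(n)}}\psi(a))\| = \|\phi\psi(a) - a\| < \epsilon$ shows that the first $n$ summands provide an $n$-colour approximation of $a - \phi|_{B^{(n)}}\psi(a)$. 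By the induction hypothesis each immediate successor node in the tree then has rank $\le n-1$, and taking the supremum and adding one yields rank $\le n$, that is, $\trnudim A \le n$.

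The harder direction is $\trnudim A \le n \Rightarrow \nudim A \le n$, and this is where I expect the main obstacle. Here one is handed a well-founded tree of rank $\le n$ and must manufacture a single $(n+1)$-coloured approximation with uniform error control. The combinatorial heart is a lemma, in the spirit of Radul's analysis of the $\Ord$ function \cite{Rad10}, asserting that finite rank $n$ forces every branch of the tree to have length at most $n+1$; this lets one extract, for each $(F,\epsilon)$, a chain of at most $n+1$ successive order-zero peels. The genuine difficulty is then analytic rather than combinatorial: the order-zero maps produced along the chain live on different finite-dimensional algebras and approximate successive residuals, so I must assemble them into maps out of a single direct sum $B = B^{(0)} \oplus \cdots \oplus B^{(n)}$ while (i) preserving the order-zero property on each summand, and (ii) ensuring that the errors accumulated over the $n+1$ stages telescope to something below the originally prescribed $\epsilon$. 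I expect this to require running the recursion with a geometrically decreasing sequence of tolerances $\epsilon, \epsilon/2, \dots$ and enlarging the finite subset at each stage to absorb the images of the previous approximations, after which a standard perturbation argument for order-zero maps completes the assembly.

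Combining the two implications gives the stated equivalence. I would finally note that the same bookkeeping shows the finite-rank case to be insensitive to the incidental choices (net versus sequence, unit ball versus arbitrary finite sets) made in Definition \ref{defn:trnudim}, so that no generality is lost in either reduction.
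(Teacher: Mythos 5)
Your proposal rests on a misreading of Definition \ref{defn:set for trnudim}: membership of a tuple $\sigma=\{t_0<\cdots<t_n\}$ in $N(\F,q)$ is decided by the existence (or not) of a \emph{single} $(n+1)$-coloured approximation $(F,\psi,\phi)$, where the parameter $t_i$ controls the approximate order-zero defect of the $i$-th \emph{upward} map, \emph{i.e.}, $\|\psi^{(i)}(a')\psi^{(i)}(b')\|<1/t_i+\|a'b'\|$. The ordinal $\Ord$ is a purely combinatorial rank on these tuples of parameters; there is no tree of ``successive residual approximations'' to be peeled off or reassembled. Consequently both halves of your argument miss the actual content. In the direction $\trnudim~A\le n\Rightarrow\nudim~A\le n$ (which you expected to be the hard one) there is nothing to assemble: by Lemma \ref{lem:ordinal finite lemma}, $\Ord~N(\F,q)\le n$ means every $\sigma\in N(\F,q)$ has $|\sigma|\le n$, so some $(n+1)$-tuple with $t_0\ge q$ lies \emph{outside} $N(\F,q)$, and by definition that non-membership already hands you one finite-dimensional algebra $F=F^{(0)}\oplus\cdots\oplus F^{(n)}$ together with maps $\psi,\phi$ satisfying $\|\phi\circ\psi(a)-a\|<1/q$ on $\F$. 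This direction is a one-line unwinding of the definition, not an assembly problem requiring telescoping tolerances.

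The genuine difficulty sits in the converse, $\nudim~A\le n\Rightarrow\trnudim~A\le n$, and your peeling induction does not address it. There are two problems. First, the induction is not available: the inductive hypothesis would have to be applied to $A$ itself (the residual $a-\phi|_{B^{(n)}}\psi(a)$ still lives in $A$), but $A$ need not have nuclear dimension at most $n-1$, so there is no smaller object to which the hypothesis can be applied. Second, and more fundamentally, to show that an $(n+1)$-tuple $\{t_0<\cdots<t_n\}$ avoids $N(\F,q)$ you must produce approximations satisfying condition (1) of Definition \ref{defn:set for trnudim}: the upward maps $\psi^{(i)}$ must be approximately order zero with defect at most $1/t_i$. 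Nothing in Definition \ref{defn:nuclear dimension} gives any order-zero control on $\psi$; this is exactly why the paper invokes Robert's result (Proposition \ref{prop:app order zero for psi}), which upgrades a nuclear-dimension-$n$ system so that the induced maps $\psi^{(i)}$ into $\prod_\lambda F^{(i)}_\lambda/\bigoplus_\lambda F^{(i)}_\lambda$ are order zero; combined with Lemma \ref{lem:easy fact for order 0 map} this yields $\|\psi^{(i)}_\lambda(a)\psi^{(i)}_\lambda(b)\|<1/t_i+\|ab\|$ for $\lambda$ large, and then Lemma \ref{lem:ordinal finite lemma} together with Lemma \ref{lem:elementary property for N(F,q)} finishes the argument. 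Your proposal never engages with this condition, so the key ingredient of the proof is missing.
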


On the other hand, Robert introduced a notion called ``having nuclear dimension at most $\omega$'' in \cite{Rob11} and studied the $\omega$-comparison property and the corona factorisation property from \cite{OPR12}. We consider a stronger form called ``strongly having nuclear dimension at most $\omega$'' (see Definition \ref{defn:nudim at most omega stronger form} for a precise definition) and provide the following characterisation to make a bridge with our notion of transfinite nuclear dimension:

\begin{alphthm}\label{introthm:omega nudim and trnudim}
For a $C^*$-algebra $A$, we have $\trnudim~A < \infty$ \emph{if and only if} $A$ strongly has nuclear dimension at most $\omega$.
\end{alphthm}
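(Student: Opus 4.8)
The plan is to prove the equivalence by matching up the two definitions through the ordinal grading. Since the statement concerns $\trnudim~A < \infty$ versus "strongly has nuclear dimension at most $\omega$," I expect both notions to be built from the same underlying data --- approximating completely positive (c.p.) maps factoring through finite-dimensional algebras, with a controlled number of "colours" --- but organised differently: the transfinite nuclear dimension assigns an ordinal rank via a well-founded tree of finite decompositions, whereas Robert's $\omega$-condition requires, for each finite-dimensional subset (or each tolerance $\varepsilon$ and finite set $F \subseteq A$), an approximation whose finite-dimensional model splits into finitely many summands on which the induced maps are order-zero (or approximately so). The "strong" form presumably tightens this by demanding uniform control on how the number of summands grows, which is exactly what an ordinal bound encodes.

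First I would unwind Definition \ref{defn:trnudim} to extract the combinatorial object witnessing $\trnudim~A \leq \alpha$: a well-founded labelling that, at each node, records a finite-colour c.p. approximation together with a prescription for passing to the next level by peeling off one summand and decreasing the ordinal. The finiteness of the ordinal rank ($\trnudim~A < \infty$ means $\trnudim~A \leq \alpha$ for some ordinal $\alpha$) should translate into well-foundedness of the associated decomposition tree. Then I would unwind Definition \ref{defn:nudim at most omega stronger form} and isolate precisely what "strongly" adds over Robert's original definition. My expectation is that the strong form asks for a single system of approximations in which the summand count is governed by a well-founded assignment, so that the two definitions are literally describing the same data from two viewpoints.

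The proof would then proceed in two directions. For the forward implication ($\trnudim~A < \infty \Rightarrow$ strongly having nuclear dimension at most $\omega$), I would take the well-founded tree witnessing $\trnudim~A \leq \alpha$ and, by transfinite induction on $\alpha$, extract from it the finitely-many-summand order-zero approximations required by the strong $\omega$-condition, using the fact that along any branch the ordinal strictly decreases and hence terminates after finitely many steps; this finiteness of each branch is what produces the finite colour count for each fixed tolerance. For the converse, I would start from the strong $\omega$-approximations and build the well-founded labelling by recursion, assigning to each approximation the ordinal that measures the depth of its summand-decomposition, invoking the defining property of the strong form to guarantee the assignment is well-defined and well-founded, whence $\trnudim~A \leq \alpha$ for the resulting $\alpha$.

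The main obstacle I anticipate is the bookkeeping at the interface of the two definitions: matching the quantifier structure (the order of "for all $\varepsilon$, for all finite $F$" against "there exists a finite decomposition") so that the ordinal extracted in one direction is genuinely the same data that feeds the other, and in particular verifying that the "strong" hypothesis supplies enough uniformity to make the recursive ordinal assignment halt. A secondary technical point will be controlling the c.p.\ approximation errors as one peels off summands across levels of the tree --- ensuring that the accumulated error from finitely many stages along each branch stays within the prescribed tolerance, which should follow from the finiteness of branches but requires a careful choice of error budget at each node.
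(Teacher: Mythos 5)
Your proposal rests on two misreadings of the definitions, and they are not cosmetic. First, the \emph{strong} form of having nuclear dimension at most $\omega$ (Definition \ref{defn:nudim at most omega stronger form}) differs from Robert's Definition \ref{defn:nudim at most omega} in exactly one point: each \emph{finite-stage} map $\phi_\lambda^{(i)}\colon F_\lambda^{(i)}\to A$ is required to be order zero, rather than only the induced limit map $\phi^{(i)}$ into $A_\Lambda$. It does \emph{not} demand ``uniform control on how the number of summands grows''; both definitions use one net of summands for each $i\in\NN$ and the same convergence requirement $\iota(a)=\sum_i \phi^{(i)}\psi^{(i)}(a)$. This matters: in the direction ``strongly $\omega$ $\Rightarrow$ $\trnudim~A<\infty$'' one refutes membership of a tuple in $N(\F_0,q_0)$ by exhibiting a triple $(F,\psi,\phi)$ whose components $\phi^{(i)}$ are order zero at a finite stage $\lambda$ --- condition (c) of Definition \ref{defn:set for trnudim} --- and that is precisely what the ``strong'' hypothesis supplies; with your reading of ``strong'', this direction has nothing to run on. Second, your tree picture is inverted: the well-founded object is $N(\F,q)$ itself, whose elements are tuples for which \emph{no} approximation exists, so approximations correspond to tuples \emph{outside} $N(\F,q)$. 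They do not sit ``at each node'', and there is no ``peeling off one summand'': passing from $M$ to $M^a$ extends a forbidden tuple by one tolerance; it does not decompose any map.

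Beyond this, the proposal never names the mechanisms that actually prove the two implications. The combinatorial bridge is Lemma \ref{lem:ordinal infinite lemma}: for an \emph{inclusive} $M\subseteq\Fin~\NN$, $\Ord~M=\infty$ if and only if $M$ contains all initial segments of some infinite sequence. In the forward direction, this (together with Lemma \ref{lem:char for fintieness of trnudim}) yields for every pair $(\F,q)$ a finite canonical tuple outside $N(\F,q)$, hence a finite-colour approximation; one must then assemble these over the net of pairs $(\F,q)$ with $q\geq\max\{\|a\|^2:a\in\F\}$ and \emph{verify} that the limit maps $\psi^{(i)}$ are order zero --- a genuine estimate, since the defining inequality of $N(\F,q)$ only controls $\psi^{(i)}(a')\psi^{(i)}(b')$ for perturbed elements $a',b'$ --- and that the $\phi^{(i)}$ are order zero via Lemma \ref{lem:lifting lemma}. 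In the backward direction one assumes $\Ord~N(\F_0,q_0)=\infty$, extracts an infinite chain $t_0<t_1<\cdots$ whose initial segments all lie in $N(\F_0,q_0)$, truncates the norm-convergent series $\sum_i\phi^{(i)}\psi^{(i)}(a)$ at a level $N_0$ with error $<1/q_0$, and uses order zero of the $\psi^{(i)}$ together with the strong hypothesis to conclude $\{t_0,\ldots,t_{N_0}\}\notin N(\F_0,q_0)$, a contradiction. Your converse step --- ``assigning to each approximation the ordinal that measures the depth of its summand-decomposition'' --- has no content here: the $\omega$-approximations carry no ordinal depth, and the finiteness that kills the infinite chain comes from norm convergence of the series, not from any well-founded recursion. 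The sound part of your write-up is the intuition that $\Ord<\infty$ means well-foundedness; none of the steps that turn that intuition into a proof are present.
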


Combining Theorem \ref{introthm:omega nudim and trnudim} with \cite[Theorem 3.4 and Corollary 3.5]{Rob11}, we reach the following:

\begin{alphcor}\label{introcor:cor to thm omega}
Let $A$ be a $C^*$-algebra with $\trnudim~A < \infty$, then $A$ is nuclear and the Cuntz semigroup $\mathrm{Cu}(A)$ has the $\omega$-comparison property (see \cite[Definition 2.11]{OPR12}). Moreover if $A$ is $\sigma$-unital, then $A$ has the corona
factorisation property.
\end{alphcor}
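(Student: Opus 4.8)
The plan is to deduce this corollary by chaining Theorem~\ref{introthm:omega nudim and trnudim} with the two results of Robert cited in the statement, so that the only genuine work lies in matching our definition with Robert's. Since $\trnudim~A<\infty$ by hypothesis, Theorem~\ref{introthm:omega nudim and trnudim} immediately yields that $A$ strongly has nuclear dimension at most $\omega$ in the sense of Definition~\ref{defn:nudim at most omega stronger form}. The first step is then to verify that this strong form implies ``having nuclear dimension at most $\omega$'' in the original sense of Robert~\cite{Rob11}. I expect this to be essentially immediate upon unwinding the two definitions, since the strong form is by design a strengthening: every approximating system witnessing our strong property should also witness Robert's weaker property, perhaps after discarding or forgetting whatever extra uniformity our definition imposes.

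Once $A$ is known to have nuclear dimension at most $\omega$ in Robert's sense, I would invoke \cite[Theorem 3.4]{Rob11}, which asserts precisely that such a $C^*$-algebra is nuclear and that its Cuntz semigroup $\mathrm{Cu}(A)$ has the $\omega$-comparison property of \cite[Definition 2.11]{OPR12}. This delivers the first two conclusions of the corollary without any further argument, as both are read off directly from the conclusion of the imported theorem.

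For the final assertion, I would add the hypothesis that $A$ is $\sigma$-unital and apply \cite[Corollary 3.5]{Rob11}, which upgrades the $\omega$-comparison property of $\mathrm{Cu}(A)$ to the corona factorisation property for $\sigma$-unital $C^*$-algebras. The main, and really the only, obstacle I anticipate is the definitional reconciliation in the first step: one must confirm that our stronger notion genuinely refines Robert's rather than being merely parallel to it, so that the hypotheses of \cite[Theorem 3.4]{Rob11} are actually met. Granting that verification, the corollary is a direct bookkeeping consequence of Theorem~\ref{introthm:omega nudim and trnudim} together with the two imported results, requiring no additional estimates of completely positive approximations.
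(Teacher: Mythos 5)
Your proposal is correct and matches the paper's own argument exactly: the paper derives this corollary by combining Theorem~\ref{introthm:omega nudim and trnudim} with \cite[Theorem 3.4 and Corollary 3.5]{Rob11}, and the definitional reconciliation you flag is handled in the remark following Definition~\ref{defn:nudim at most omega stronger form}, where the strong form is observed to differ from Robert's only by the added requirement that each $\phi_\lambda^{(i)}$ be order zero, so it trivially implies Robert's notion. No further work is needed.
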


As mentioned above, Winter and Zacharias showed in \cite[Theorem 8.5]{WZ10} that the nuclear dimension of the uniform Roe algebra $C^*_u(X)$ for a discrete metric space $(X,d)$ of bounded geometry is bounded by the asymptotic dimension of $X$. We prove the following transfinite generalisation of their result:

\begin{alphthm}\label{introthm:trnudim and trasdim}
Let $(X,d)$ be a discrete metric space of bounded geometry. Then we have $\trnudim~C^*_u(X) \leq \trasdim~X$.
\end{alphthm}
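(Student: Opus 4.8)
The plan is to prove the final statement, Theorem C ($\trnudim~C^*_u(X) \leq \trasdim~X$), by adapting the quantitative strategy of the Winter--Zacharias bound \cite[Theorem 8.5]{WZ10} to the transfinite setting. Let me think about how the finite-dimensional argument works and what must be tracked ordinal-by-ordinal.

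**Recalling the finite case.** In the Winter--Zacharias proof, if $\asdim~X \leq n$ then for any prescribed scale $R$ one finds a cover of $X$ by $n+1$ families $\mathcal U_0,\dots,\mathcal U_n$ of uniformly bounded, $R$-disjoint sets. Each family yields one "color," and out of each colored family one builds a finite-dimensional subalgebra (a matrix block, or rather a direct sum of matrix algebras indexed by the members of the family, with size controlled by the bounded geometry constant at the relevant scale) together with a c.p.\ map into it and a c.p.\ map back that is approximately multiplicative on operators of propagation $\leq R/2$ or so. The $R$-disjointness makes each colored piece have order zero, and summing over the $n+1$ colors gives the $(n+1)$-coloured c.p.\ approximation witnessing $\nudim \leq n$.

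**The transfinite mechanism.** The definition of $\trasdim$ is phrased via a well-founded ordinal rank on the partial order of "position" sequences recording how many colors remain to be assigned as one refines at larger and larger scales; $\trasdim~X \leq \alpha$ means this rank is bounded by $\alpha$. The definition of $\trnudim$ (Definition \ref{defn:trnudim}) should be set up in a parallel fashion, assigning ordinal ranks to the data of partial c.p.\ approximations according to how many order-zero "colors" still need to be added. So my plan is:

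First, I would unwind both transfinite definitions to their common combinatorial skeleton: an Ostrand-type/Kolmogorov game where a position is a finite requirement (a scale, or a finite set of elements of $C^*_u(X)$ and a tolerance), and a move reduces the rank by producing one order-zero color and passing to a residual requirement of strictly smaller rank. Second, I would establish the single-step translation: given an $R$-disjoint uniformly bounded family supplied by one move in the $\trasdim$ game, I build exactly one order-zero c.p.\ approximating pair for $C^*_u(X)$, reproducing the estimates from the finite case but only for this one color and only to the prescribed tolerance on the prescribed finite set. Third, I would run a transfinite induction on $\alpha = \trasdim~X$: for the successor/limit structure of the $\trasdim$ rank I match the residual game position to a residual $\trnudim$ requirement, invoking the induction hypothesis on the strictly smaller rank to handle the remaining colors, and then glue the one freshly-built color on top. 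The conclusion $\trnudim~C^*_u(X) \leq \alpha$ follows because the rank of the $\trnudim$ requirement is dominated, step by step, by the rank of the corresponding $\trasdim$ position.

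**The main obstacle.** The hard part will be the bookkeeping in the inductive step: I must exhibit a rank-nonincreasing map from positions in the $\trnudim$ game to positions in the $\trasdim$ game (or vice versa) that is compatible with the moves on both sides, so that a winning strategy of ordinal height $\alpha$ for $\trasdim$ transfers to one of height $\leq\alpha$ for $\trnudim$. This requires being careful that the finite set of algebra elements and tolerance defining a $\trnudim$ position determine a single scale $R$ for which the $\trasdim$ family is requested, and conversely that refining the scale (the only thing the $\trasdim$ game does) genuinely corresponds to shrinking the residual c.p.-approximation requirement; the matching of the residual positions, not the single-color construction, is where the transfinite content lives. A secondary technical point is ensuring the approximate-multiplicativity and order-zero estimates remain uniform across the transfinitely many steps, which I expect to handle by fixing, at the outset, the bounded-geometry data and restricting attention to generators of controlled propagation, so that each step's estimate depends only on the current scale and tolerance and not on how deep into the induction we are.
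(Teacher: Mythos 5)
Your analytic core is right: fixing the finite set and tolerance, approximating its elements by finite-propagation operators, and redoing the Winter--Zacharias estimates with constants tied to the scales is exactly what is needed, and it matches the paper. The genuine gap is in your transfinite mechanism. First, the ``one colour per move, then glue'' step is not well defined: in Definition \ref{defn:set for trnudim} the approximation requirement $\|\phi\circ\psi(a)-a\|<1/q$ constrains only the \emph{sum} $\sum_i\phi^{(i)}\circ\psi^{(i)}$, so a single colour built from a single disjoint family witnesses nothing at any intermediate stage; moreover the cut-off functions must satisfy $\sum_i f_i^2\equiv 1$ with H\"{o}lder constants summing to less than $1/(3qM)$, which forces them to be constructed \emph{jointly} from all the families, with all scales preselected in a sparse geometric progression (this is precisely Lemma \ref{lem:cover for C(q)}). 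Second, the ``residual positions'' you propose to match carry no induction hypothesis you can invoke: the relativized set $N(\F,q)^{\sigma}$ is not the set $N(\F',q')$ of any $C^*$-algebra, and $A(X,d)^{\tau}$ is not the set $A(X',d')$ of any metric space (in particular not of the uncovered part of $X$), so the statement ``$\trasdim\le\beta$ implies $\trnudim\le\beta$'' for $\beta<\alpha$ has nothing to be applied to. Repairing this would force you to strengthen the induction to arbitrary relativized sets $M^{\sigma}$, which is exactly the content of Borst's lemma (Lemma \ref{lem:ordinal bijection lemma}); in effect you would be reproving a purely combinatorial fact inside the analytic argument.

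The paper's proof shows how to decouple the two. Fix $q$ and $\F=\{b_0,\dots,b_m\}$, choose $a_j\in\CC_u[X]$ close to $b_j$, and set constants $K,M$ from the norms and propagation data of these approximants; then choose \emph{one} injective monotone map $\Theta:\NN\to\NN$ with $\Theta(t)>12t^2M^2K^2$ and $\Theta(\NN)\subseteq\{3\cdot(3qM)^2\cdot 2^{2k}:k\ge 1\}$. The only thing to prove is a finitary implication, for each $\sigma=\{t_0,\dots,t_n\}$ \emph{all colours at once} and in the direction from the algebra side to the space side: if $\Theta(\sigma)\notin A(X,d)$, i.e.\ uniformly bounded $\Theta(t_i)$-disjoint families $\U_0,\dots,\U_n$ covering $X$ exist, then $\sigma\notin N(\F,q)$, by running the full Winter--Zacharias construction (partition of unity from Lemma \ref{lem:cover for C(q)}, commutator estimates from Lemma \ref{lem:estimate for commutant}, finite-dimensional cut-down from Lemma \ref{lem:prod of matrix}). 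Since $\Theta$ is injective, it preserves cardinalities, and Lemma \ref{lem:ordinal bijection lemma} immediately yields $\Ord~N(\F,q)\le\Ord~A(X,d)=\trasdim~X$. No transfinite induction, no game matching, and no uniformity-across-steps issue remains: each $\sigma$ is finite and treated independently, with estimates depending only on $q$, $K$, $M$ and the scales. I recommend restructuring your argument along these lines, keeping your single-step estimates as the analytic engine.
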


Combining with Radul's observation on asymptotic property C, we reach the following:

\begin{alphcor}\label{introcor:cor to thm asdim}
Let $(X, d)$ be a discrete metric space of bounded geometry with asymptotic property C, then the uniform Roe algebra $C^*_u(X)$ has the corona factorisation property and its Cuntz semigroup has the $\omega$-comparison property.
\end{alphcor}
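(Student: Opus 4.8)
The plan is to deduce this corollary directly from Theorem \ref{introthm:trnudim and trasdim}, Corollary \ref{introcor:cor to thm omega}, and Radul's classification of spaces with asymptotic property C. The whole content is a matter of chaining together the inequalities and implications already established, so no genuinely new construction should be required; the real work has been absorbed into the earlier statements.

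First I would recall Radul's observation from \cite{Rad10} that a metric space has asymptotic property C \emph{if and only if} its transfinite asymptotic dimension is a genuine ordinal, i.e. $\trasdim~X < \infty$. Since $(X,d)$ is assumed to have asymptotic property C, this immediately gives $\trasdim~X < \infty$. Here I would take care to cite the equivalence in exactly the form in which Radul states it, so that ``$<\infty$'' is correctly interpreted as ``$\trasdim$ is a well-defined ordinal'' rather than ``finite ordinal''.

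Next I would invoke Theorem \ref{introthm:trnudim and trasdim}, which asserts $\trnudim~C^*_u(X) \leq \trasdim~X$ for any discrete metric space of bounded geometry. Combined with the previous step this yields $\trnudim~C^*_u(X) < \infty$. At this point Corollary \ref{introcor:cor to thm omega} applies essentially verbatim: it tells us that $C^*_u(X)$ is nuclear and that the Cuntz semigroup $\mathrm{Cu}(C^*_u(X))$ has the $\omega$-comparison property.

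For the corona factorisation property the only hypothesis of Corollary \ref{introcor:cor to thm omega} still to be verified is $\sigma$-unitality, and this is the one place I would pause. I would observe that the identity operator on $\ell^2(X)$ has propagation zero and hence lies in $C^*_u(X)$, so that $C^*_u(X)$ is in fact unital, and in particular $\sigma$-unital. Thus the final clause of Corollary \ref{introcor:cor to thm omega} also applies and delivers the corona factorisation property, completing the argument. I do not expect any serious obstacle in this corollary itself: the substantive content lives in Theorem \ref{introthm:trnudim and trasdim} (the transfinite dimension comparison) and in Radul's characterisation, and here one need only string these together and check the trivial $\sigma$-unitality condition.
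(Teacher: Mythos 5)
Your proposal is correct and follows exactly the paper's route: the paper obtains this corollary by combining Proposition \ref{prop:Property C} (Radul's characterisation of asymptotic property C via $\trasdim~X < \infty$), Theorem \ref{introthm:trnudim and trasdim}, and Corollary \ref{introcor:cor to thm omega}. Your explicit verification that $C^*_u(X)$ is unital (hence $\sigma$-unital) is a detail the paper leaves implicit, but it is the right observation and completes the same argument.
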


The paper is organised as follows. In Section \ref{sec:pre}, we recall basic tools of ordinal numbers and some notions from metric/coarse geometry. In Section \ref{sec:trnudim}, we introduce our notion of transfinite nuclear dimension, prove Theorem \ref{introthm:trnudim and nudim} and Theorem \ref{introthm:omega nudim and trnudim} and provide an example. In Section \ref{sec:unif. Roe}, we recall the notion of uniform Roe algebras and (transfinite) asymptotic dimension, and prove Theorem \ref{introthm:trnudim and trasdim}. Finally in Section 5, we discuss the transfinite nuclear dimension in the commutative case.

\subsection*{Acknowledgements}
The first named author would like to thank Prof. Vladimir Manuilov for several helpful discussions. The second named author would like to thank Prof. Kang Li, Dr. Yanlin Liu and Dr. Jianguo Zhang for several useful comments after reading an early draft.

\section{Preliminaries}\label{sec:pre}

\subsection{The ordinal number Ord}

Here we recall the notion of ordinal number defined for collections of finite subsets from \cite{Bor88}.

Let $L$ be an arbitrary set and $\Fin~L$ be the collection of all finite and non-empty subsets of $L$. Let $M \subseteq \Fin~L$. For $\sigma \in \{\varnothing\}\cup \Fin~L$, denote
\[
M^{\sigma} = \{\tau\in \Fin~L: \tau \cup \sigma \in M \text{ and } \tau \cap \sigma = \varnothing\}.
\]
Let $M^a$ abbreviate $M^{\{a\}}$ for $a \in L$.

\begin{defn}[{\cite[Definition 2.1.1]{Bor88}}]\label{defn:ordinal number}
Let $L$ be an arbitrary set and $M \subseteq \Fin~L$. Define the \emph{ordinal number} $\Ord~M$ inductively as follows:
\[
\begin{array}{lll}
\Ord~M = 0 &\Leftrightarrow & M = \varnothing,\\
\Ord~M \leq \alpha &\Leftrightarrow & \forall~ a\in L, \Ord~M^a < \alpha,\\
\Ord~M = \alpha &\Leftrightarrow & \Ord~M \leq \alpha \text{ and } \Ord~M < \alpha \text{ is not true},\\
\Ord~M < \infty &\Leftrightarrow & \Ord~M \leq \alpha \text{ for some ordinal number } \alpha,\\
\Ord~M = \infty &\Leftrightarrow & \Ord~M \leq\alpha \text{ is not true for every ordinal number } \alpha.
\end{array}
\]
\end{defn}

We have the following easy observation:

\begin{lem}\label{lem:cal for Ord M}
Let $L$ be a set and $M \subseteq \Fin~L$. Set $\xi:=\sup\{\Ord~M^a: a\in L\}$. Then either $\Ord~M = \xi$ or $\Ord~M = \xi+1$.
\end{lem}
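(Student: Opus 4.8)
The plan is to prove the two-sided estimate $\xi \le \Ord~M \le \xi+1$ and then to invoke the trivial fact that $\xi$ and $\xi+1$ are the only ordinals $\gamma$ with $\xi \le \gamma \le \xi+1$. Before starting, I would record one structural remark extracted from Definition \ref{defn:ordinal number}: the relation $\Ord~M \le (\cdot)$ is monotone, i.e. $\Ord~M \le \alpha$ together with $\alpha \le \alpha'$ yields $\Ord~M \le \alpha'$, since $\Ord~M^a < \alpha \le \alpha'$ gives $\Ord~M^a < \alpha'$ for every $a \in L$. Hence $\{\alpha : \Ord~M \le \alpha\}$ is an upward-closed class of ordinals and $\Ord~M$ is precisely its least element (with the value $\infty$ assigned exactly when the class is empty). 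This is the reformulation I would use throughout.

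For the upper bound I would verify $\Ord~M \le \xi+1$ directly from the definition: this requires $\Ord~M^a < \xi+1$, i.e. $\Ord~M^a \le \xi$, for every $a \in L$, which is nothing but the defining property of $\xi = \sup\{\Ord~M^a : a \in L\}$. For the lower bound I would show that no ordinal $\beta < \xi$ lies in $\{\alpha : \Ord~M \le \alpha\}$: if $\Ord~M \le \beta$ held, then Definition \ref{defn:ordinal number} would force $\Ord~M^a < \beta$ for all $a$, and passing to the supremum over $a$ would give $\xi \le \beta < \xi$, a contradiction. Since $\Ord~M$ is the least element of this class, it follows that $\Ord~M \ge \xi$. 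Combining the two bounds gives $\xi \le \Ord~M \le \xi+1$, which is the assertion.

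The few points demanding care---rather than any genuine obstacle---are the degenerate cases hidden inside the supremum. When $M = \varnothing$ one has $M^a = \varnothing$ and hence $\Ord~M^a = 0$ for all $a$, so $\xi = 0 = \Ord~M$. When some $\Ord~M^a = \infty$ (so that $\xi = \infty$ under the convention that $\infty$ dominates every ordinal), the condition $\Ord~M^a < \alpha$ fails for every ordinal $\alpha$, forcing $\Ord~M = \infty = \xi$ as well; both situations are consistent with the statement. I expect essentially all of the content to sit in the clean sandwich argument of the previous paragraph, with the supremum manipulation being the only place where one must be slightly attentive.
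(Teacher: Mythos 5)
Your proof is correct: every step checks out against Definition \ref{defn:ordinal number}, including the opening monotonicity remark (the class $\{\alpha : \Ord~M \leq \alpha\}$ is upward closed and $\Ord~M$ is its least element, or $\infty$ when the class is empty) and the two degenerate cases. The route differs from the paper's in organization rather than in substance. The paper argues by a case distinction on whether the supremum is attained: if $\Ord~M^a = \xi$ for some $a \in L$, then $\Ord~M \leq \xi$ fails while $\Ord~M \leq \xi+1$ holds, so $\Ord~M = \xi+1$; if the supremum is not attained, then $\Ord~M^a < \xi$ for every $a$, so $\Ord~M \leq \xi$, and the same supremum argument you use for your lower bound rules out every $\beta < \xi$, giving $\Ord~M = \xi$. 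Your sandwich $\xi \leq \Ord~M \leq \xi+1$ uses exactly the same two ingredients --- the defining property of the supremum for the upper bound and its least-upper-bound property for the lower bound --- but skips the case split, which makes the write-up slightly shorter and, as you note, lets the degenerate cases ($M = \varnothing$, or some $\Ord~M^a = \infty$) be absorbed cleanly. What the paper's version buys in exchange is strictly more information: it identifies which alternative occurs, namely $\Ord~M = \xi+1$ exactly when the supremum is attained and $\Ord~M = \xi$ otherwise, a refinement the sandwich argument does not recover. Either proof fully establishes the lemma as stated.
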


\begin{proof}
If there exists $a\in L$ such that $\Ord~M^a = \xi$, then by definition we have $\Ord~M = \xi+1$. Otherwise, by definition again $\Ord~M = \xi$.
\end{proof}

We record several useful lemmas from \cite{Bor88}.

\begin{lem}[{\cite[Lemma 2.1.2]{Bor88}}]\label{lem:ordinal inclusion lemma}
Let $L$ be a set, and $M \subseteq N \subseteq \Fin~L$. Then we have $\Ord~M \leq \Ord~N$.
\end{lem}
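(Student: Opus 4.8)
The plan is to argue by transfinite induction on an ordinal bound for $\Ord~N$. First I would record the one genuinely structural observation, namely that if $M \subseteq N$, then $M^a \subseteq N^a$ for every $a \in L$. Indeed, any $\tau \in M^a$ satisfies $\tau \cup \{a\} \in M \subseteq N$ and $\tau \cap \{a\} = \varnothing$, so $\tau \in N^a$ directly from the definition of $N^a$. This is the only place where the hypothesis $M \subseteq N$ is actually used, and it is precisely what lets the induction descend to the ``derived'' collections $M^a, N^a$.

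Since $\Ord~M \leq \Ord~N$ holds vacuously when $\Ord~N = \infty$, it suffices to treat the case $\Ord~N < \infty$. I would then establish the following claim by transfinite induction on $\alpha$: \emph{for every ordinal $\alpha$ and every pair $M \subseteq N \subseteq \Fin~L$, if $\Ord~N \leq \alpha$ then $\Ord~M \leq \alpha$.} Keeping the statement universally quantified over all such pairs is essential, because the inductive step will apply the hypothesis to the pair $(M^a, N^a)$ rather than to $(M, N)$ itself.

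For the base case $\alpha = 0$, the condition $\Ord~N \leq 0$ forces $N = \varnothing$ by Definition \ref{defn:ordinal number}, whence $M = \varnothing$ and $\Ord~M = 0 \leq 0$. For the inductive step, I assume the claim for all ordinals strictly below $\alpha$, together with $\Ord~N \leq \alpha$; by Definition \ref{defn:ordinal number} the latter means $\Ord~N^a < \alpha$ for every $a \in L$. Fixing such an $a$ and writing $\beta := \Ord~N^a < \alpha$, the observation above gives $M^a \subseteq N^a$, so the inductive hypothesis at the ordinal $\beta$, applied to the pair $(M^a, N^a)$, yields $\Ord~M^a \leq \beta < \alpha$. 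As $a \in L$ was arbitrary, Definition \ref{defn:ordinal number} gives $\Ord~M \leq \alpha$, closing the induction. Taking $\alpha = \Ord~N$ then produces $\Ord~M \leq \Ord~N$, as desired. The argument is essentially routine; the only point deserving care is to phrase the induction so that the hypothesis is genuinely available for the derived pairs $(M^a, N^a)$ at the smaller ordinals $\Ord~N^a$, which is exactly why the claim must be stated uniformly in $M \subseteq N$ rather than for the fixed given pair.
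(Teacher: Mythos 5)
Your proof is correct. The paper offers no argument of its own here---the lemma is quoted verbatim from \cite[Lemma 2.1.2]{Bor88}---and your transfinite induction on an ordinal bound for $\Ord~N$, quantified uniformly over all pairs $M \subseteq N$ so that it can be applied to the derived pairs $M^a \subseteq N^a$, is exactly the standard (and essentially the only) argument, the one given by Borst.
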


For a finite set $A$, denote $|A|$ the cardinality of $A$.

\begin{lem}[{\cite[Lemma 2.1.4]{Bor88}}]\label{lem:ordinal finite lemma}
Let $L$ be a set, $M \subseteq \Fin~L$ and $n\in \NN$. Then $\Ord~M \leq n$ \emph{if and only if} $|\sigma| \leq n$ for any $\sigma \in M$.
\end{lem}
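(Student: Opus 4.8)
The plan is to argue by induction on $n \in \NN$, working directly from the recursive Definition \ref{defn:ordinal number}. For the base case $n = 0$, I would note that $\Ord~M \leq 0$ forces $\Ord~M = 0$, which by definition holds precisely when $M = \varnothing$; on the other side, since every member of $\Fin~L$ is non-empty, the condition ``$|\sigma| \leq 0$ for all $\sigma \in M$'' likewise holds exactly when $M = \varnothing$. So both sides are equivalent to $M = \varnothing$, and the case is settled.

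For the inductive step, assume the equivalence for $n$ and prove it for $n+1$. The recursion gives $\Ord~M \leq n+1 \iff \Ord~M^a < n+1$ for all $a \in L$, and because $n+1$ is a successor ordinal we have $\Ord~M^a < n+1 \iff \Ord~M^a \leq n$. Applying the induction hypothesis to each $M^a$ converts this into the statement: for every $a \in L$ and every $\tau \in M^a$, one has $|\tau| \leq n$. Thus the whole problem reduces to showing that this latter condition is equivalent to $|\sigma| \leq n+1$ for every $\sigma \in M$.

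The easy half of that reduction is immediate: if $\tau \in M^a$ then $\tau \cup \{a\} \in M$ with $a \notin \tau$, so $|\tau| + 1 = |\tau \cup \{a\}| \leq n+1$, giving $|\tau| \leq n$. For the converse, given $\sigma \in M$ I would pick some $a \in \sigma$ and set $\tau = \sigma \setminus \{a\}$, so that $\sigma = \tau \cup \{a\}$ exhibits $\tau$ as an element of $M^a$ and hence $|\sigma| = |\tau| + 1 \leq n+1$.

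The one genuine subtlety — and the step I expect to be the main obstacle — is that $\Fin~L$ consists only of non-empty sets, so $M^a$ can never contain $\varnothing$; a singleton $\sigma = \{a\} \in M$ therefore has no witness $\tau \in M^a$. I would resolve this by treating singletons separately: when $|\sigma| = 1$ the bound $|\sigma| = 1 \leq n+1$ holds trivially since $n \geq 0$, while when $|\sigma| \geq 2$ the set $\tau = \sigma \setminus \{a\}$ is non-empty, hence genuinely lies in $\Fin~L$ and in $M^a$, and the argument above applies. This case split closes the induction.
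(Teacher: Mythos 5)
The paper does not actually prove this lemma: it is quoted verbatim from Borst (\cite{Bor88}, Lemma 2.1.4) and used as a black box, so there is no internal proof to compare against. Your argument is correct and is the natural one: the induction on $n$, using that $\Ord~M \leq n+1$ unwinds (via the successor-ordinal equivalence $\Ord~M^a < n+1 \iff \Ord~M^a \leq n$) to the induction hypothesis applied to each $M^a$, together with the bijection $\tau \leftrightarrow \tau \cup \{a\}$ between elements of $M^a$ and elements of $M$ containing $a$. Your separate treatment of singletons is exactly the right care to take, since $\Fin~L$ excludes $\varnothing$ and so a singleton $\{a\} \in M$ contributes nothing to $M^a$; that is the one spot where a careless version of this induction would fail, and you close it correctly by noting $1 \leq n+1$ holds trivially.
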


\begin{lem}[{\cite[Lemma 2.1.6]{Bor88}}]\label{lem:ordinal bijection lemma}
Let $\Phi:L \to L'$ be a map from a set $L$ to a set $L'$. Let $M \subseteq \Fin~L$ and $M' \subseteq \Fin~L'$ such that for every $\sigma \in M$, we have $\Phi(\sigma) \in M'$ and $|\Phi(\sigma)| = |\sigma|$. Then $\Ord~M \leq \Ord~M'$.
\end{lem}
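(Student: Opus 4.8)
The plan is to argue by transfinite induction on an ordinal bound for $\Ord~M'$. Precisely, I would prove, with full generality in the data $(\Phi, M, M')$, the statement $P(\alpha)$: \emph{if the hypotheses of the lemma hold and $\Ord~M' \leq \alpha$, then $\Ord~M \leq \alpha$.} Once $P(\alpha)$ is available for every ordinal $\alpha$, the conclusion $\Ord~M \leq \Ord~M'$ is immediate: if $\Ord~M' = \infty$ there is nothing to prove, and otherwise $\Ord~M' \leq \Ord~M'$ together with $P(\Ord~M')$ gives $\Ord~M \leq \Ord~M'$. For the base case $\alpha = 0$, Definition \ref{defn:ordinal number} turns $\Ord~M' \leq 0$ into $M' = \varnothing$; were $M$ nonempty, any $\sigma \in M$ would force $\Phi(\sigma) \in M' = \varnothing$, a contradiction, so $M = \varnothing$ and $\Ord~M = 0$.

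For the inductive step, I would assume $P(\beta)$ for all $\beta < \alpha$ and suppose $\Ord~M' \leq \alpha$. By Definition \ref{defn:ordinal number} it suffices to show $\Ord~M^a < \alpha$ for every $a \in L$. Fixing such an $a$ and setting $b := \Phi(a)$, the heart of the argument is to verify that the pair $(M^a, (M')^b)$, together with the \emph{same} map $\Phi$, again satisfies the hypotheses of the lemma. Taking $\tau \in M^a$, so that $\tau \cup \{a\} \in M$ and $a \notin \tau$, one has $\Phi(\tau \cup \{a\}) = \Phi(\tau) \cup \{b\} \in M'$, and the cardinality hypothesis gives $|\Phi(\tau) \cup \{b\}| = |\tau \cup \{a\}| = |\tau| + 1$. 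Since $|\Phi(\tau) \cup \{b\}| \leq |\Phi(\tau)| + 1 \leq |\tau| + 1$, both inequalities must be equalities, forcing simultaneously $|\Phi(\tau)| = |\tau|$ and $b \notin \Phi(\tau)$. The latter shows $\Phi(\tau) \in (M')^b$, and the former is exactly the required cardinality-preservation condition.

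With this verified, the definition of $\Ord~M' \leq \alpha$ yields $\Ord~(M')^b < \alpha$, say $\Ord~(M')^b \leq \beta$ for some $\beta < \alpha$. Applying the inductive hypothesis $P(\beta)$ to the configuration $(\Phi, M^a, (M')^b)$ then gives $\Ord~M^a \leq \beta < \alpha$. As $a \in L$ was arbitrary, Definition \ref{defn:ordinal number} delivers $\Ord~M \leq \alpha$, closing the induction and hence the proof.

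I expect the main obstacle to be precisely the verification in the inductive step that the derived collections $M^a$ and $(M')^{\Phi(a)}$ still fall under the hypotheses of the lemma — in particular the assertion $\Phi(a) \notin \Phi(\tau)$, which is what lets $\Phi(\tau)$ genuinely lie in $(M')^{\Phi(a)}$. This is exactly where the cardinality-preservation assumption $|\Phi(\sigma)| = |\sigma|$ (as opposed to mere injectivity of $\Phi$, or bare set membership) is indispensable, and the delicate point is that both $|\Phi(\tau)| = |\tau|$ and the disjointness $\Phi(a) \notin \Phi(\tau)$ have to be squeezed out of the single cardinality equation above.
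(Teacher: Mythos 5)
Your proof is correct and complete: the transfinite induction on a bound for $\Ord~M'$, with the key verification that $\tau \in M^a$ forces both $|\Phi(\tau)| = |\tau|$ and $\Phi(a) \notin \Phi(\tau)$ (so that $\Phi(\tau) \in (M')^{\Phi(a)}$), is exactly the standard argument for this lemma. The paper itself gives no proof — it imports the statement from \cite[Lemma 2.1.6]{Bor88} — and your argument is essentially the one found there, so there is nothing further to compare.
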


For a set $L$, a subset $M \subseteq \Fin~L$ is called \emph{inclusive} if for every $\sigma, \sigma' \in \Fin~L$ with $\sigma \in M$ and $\sigma' \subseteq \sigma$, then $\sigma' \in M$.

\begin{lem}[{\cite[Lemma 2.1.3]{Bor88}}]\label{lem:ordinal infinite lemma}
For a set $L$ and an inclusive subset $M \subseteq \Fin~L$, we have $\Ord~M = \infty$ \emph{if and only if} there exists a sequence $\{a_i\}_{i\in \NN}$ of distinct elements of $L$ such that $\sigma_n:=\{a_i\}_{i=1}^n \in M$ for each $n\in \NN$.
\end{lem}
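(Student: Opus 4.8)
The plan is to prove the two implications separately: the backward direction by transfinite induction on $\alpha$, and the forward direction by an inductive construction of the sequence.

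For the backward direction, suppose such a sequence $\{a_i\}$ exists with $\sigma_n = \{a_1, \ldots, a_n\} \in M$ for every $n$. I would prove by transfinite induction on $\alpha$ the statement: whenever $N \subseteq \Fin~L$ is inclusive and admits an infinite sequence of distinct elements whose initial segments all lie in $N$, then $\Ord~N \not\le \alpha$. The base case $\alpha = 0$ is immediate, since $\{a_1\} \in N$ forces $N \ne \varnothing$. For the inductive step, note first that for $a \in L$ the set $N^a$ is again inclusive: if $\tau \in N^a$ and $\varnothing \ne \tau' \subseteq \tau$, then $a \notin \tau'$ and $\tau' \cup \{a\} \subseteq \tau \cup \{a\} \in N$, so $\tau' \cup \{a\} \in N$. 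Applying this to $a = a_1$, the tails $\{a_2, \ldots, a_n\}$ (for $n \ge 2$) witness that $N^{a_1}$ carries an infinite sequence of the same type. If $\Ord~N \le \alpha$, then $\Ord~N^{a_1} < \alpha$, so $\Ord~N^{a_1} \le \beta$ for some $\beta < \alpha$, contradicting the induction hypothesis applied to $N^{a_1}$. Hence $\Ord~N \not\le \alpha$ for every $\alpha$, that is $\Ord~N = \infty$; in particular $\Ord~M = \infty$.

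For the forward direction, the crucial step is the following claim: \emph{if $M$ is inclusive with $\Ord~M = \infty$, then there exists $a \in L$ with $\Ord~M^a = \infty$.} To prove it, suppose to the contrary that $\Ord~M^a < \infty$ for every $a$. Then $\{\Ord~M^a : a \in L\}$ is a set of ordinals, so $\xi := \sup_a \Ord~M^a$ is an ordinal; since $\Ord~M^a \le \xi < \xi + 1$ for all $a$, the definition of $\Ord$ gives $\Ord~M \le \xi + 1 < \infty$, a contradiction (Lemma \ref{lem:cal for Ord M} packages the same computation). Using this claim I would build the sequence by recursion. Recording the elementary identity $(M^a)^b = M^{\{a,b\}}$ for $a \ne b$, iteration yields $M^{\{a_1, \ldots, a_k\}}$, which is inclusive by the argument above. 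Maintaining the invariant $\Ord~M^{\{a_1, \ldots, a_k\}} = \infty$, the claim applied to $N := M^{\{a_1,\ldots,a_k\}}$ produces $a_{k+1}$ with $\Ord~N^{a_{k+1}} = \infty$. Since this order is nonzero, $N^{a_{k+1}} \ne \varnothing$, so some $\tau$ satisfies $\tau \cup \{a_{k+1}\} \in N$; by definition of $N$ this set is disjoint from $\{a_1, \ldots, a_k\}$, which forces $a_{k+1} \notin \{a_1, \ldots, a_k\}$, and $\{a_1, \ldots, a_{k+1}\} \subseteq (\tau \cup \{a_{k+1}\}) \cup \{a_1, \ldots, a_k\} \in M$ gives $\{a_1, \ldots, a_{k+1}\} \in M$ by inclusiveness. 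Starting from $\Ord~M = \infty$, this recursion (an instance of dependent choice) produces the desired sequence.

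The main obstacle is the forward direction, and specifically the claim that infinite order must be inherited by some one-point reduction $M^a$: this is where inclusiveness and the fact that a set-indexed family of ordinals has an ordinal supremum are both essential, and it is what turns the otherwise purely bookkeeping recursion into a genuine argument. The backward direction is a routine transfinite induction once one observes that passing to $M^{a_1}$ preserves both inclusiveness and the sequence witness.
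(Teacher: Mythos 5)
Your proof is correct, and both directions are sound as written. A point of comparison is moot here: the paper does not prove this lemma at all --- it is recorded as a quotation of \cite[Lemma 2.1.3]{Bor88} --- so there is no internal argument to measure against; what you give is the standard (essentially Borst's) proof. In the forward direction, your key claim that $\Ord~M=\infty$ forces $\Ord~M^{a}=\infty$ for some $a\in L$ is exactly the computation the paper packages as Lemma \ref{lem:cal for Ord M} (and reuses in Proposition \ref{prop:ordinal finite imply countable}), and your bookkeeping --- the identity $(M^{a})^{b}=M^{\{a,b\}}$ for $a\neq b$, the fact that passing to $M^{a}$ preserves inclusiveness, and the use of inclusiveness to get $\{a_{1},\dots,a_{k+1}\}\in M$ from a nonempty $N^{a_{k+1}}$ --- is all verified correctly; the backward direction by transfinite induction on $\alpha$ is likewise complete, including the base case $\alpha=0$.
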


The following results essentially come from \cite[Lemma 5 and Theorem 4]{Rad10}, where the special case of transfinite asymptotic dimension was proved. We can use exactly the same argument to prove the following. For the convenience to readers, we give the proofs.

\begin{lem}\label{lem:ordinal hereditary property}
Let $L$ be a set, $M \subseteq \Fin~L$ and $\tau \in \Fin~L \cup \{\varnothing\}$ such that $\Ord~M^\tau = \alpha$ for some ordinal number $\alpha$. The for each $\xi \leq \alpha$, there exists $\sigma \in \Fin~L \cup \{\varnothing\}$ such that $\Ord~M^{\tau \cup \sigma} = \xi$.
\end{lem}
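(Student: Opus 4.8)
The plan is to proceed by transfinite induction on $\alpha$, proving the following statement for every ordinal $\alpha$: for all $M \subseteq \Fin~L$ and all $\tau \in \Fin~L \cup \{\varnothing\}$ with $\Ord~M^\tau = \alpha$, and for every $\xi \leq \alpha$, there exists $\sigma$ with $\Ord~M^{\tau \cup \sigma} = \xi$. The engine of the induction is the elementary identity
\[
M^{\tau \cup \{a\}} = (M^\tau)^a \quad (a \notin \tau),
\]
which follows by unwinding the definition of $M^{(-)}$: both sides consist precisely of those $\rho$ that are disjoint from $\tau \cup \{a\}$ and satisfy $\rho \cup \tau \cup \{a\} \in M$. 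I would verify this first, together with the trivial observations $M^\varnothing = M$ and $(M^\tau)^a = \varnothing$ whenever $a \in \tau$.

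For the base case $\alpha = 0$, and more generally for the top value $\xi = \alpha$, I take $\sigma = \varnothing$, so that $M^{\tau \cup \sigma} = M^\tau$ has ordinal $\alpha$. The substance is the descent step, where $\alpha > 0$ and $\xi < \alpha$. Here I first produce an element $a_0 \in L \setminus \tau$ with $\xi \leq \Ord~M^{\tau \cup \{a_0\}} < \alpha$. When $\xi \geq 1$, the hypothesis $\Ord~M^\tau = \alpha > \xi$ means that $\Ord~M^\tau \leq \xi$ fails, so by Definition \ref{defn:ordinal number} there is some $a_0$ with $\Ord~(M^\tau)^{a_0} \geq \xi$; since $\xi \geq 1$ forces $(M^\tau)^{a_0} \neq \varnothing$, we get $a_0 \notin \tau$, and $\Ord~(M^\tau)^{a_0} < \alpha$ because $\Ord~M^\tau \leq \alpha$. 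When $\xi = 0$, I instead note that $M^\tau \neq \varnothing$ (as $\alpha > 0$) and pick $a_0$ in any member of $M^\tau$, which automatically lies outside $\tau$. In both cases the identity above yields $\Ord~M^{\tau \cup \{a_0\}} = \Ord~(M^\tau)^{a_0} =: \beta$ with $\xi \leq \beta < \alpha$.

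Applying the induction hypothesis to the ordinal $\beta$, the same set $M$, the enlarged $\tau \cup \{a_0\}$, and the target $\xi \leq \beta$, I obtain $\sigma'$ with $\Ord~M^{(\tau \cup \{a_0\}) \cup \sigma'} = \xi$. Setting $\sigma := \{a_0\} \cup \sigma'$ gives $\tau \cup \sigma = (\tau \cup \{a_0\}) \cup \sigma'$, hence $\Ord~M^{\tau \cup \sigma} = \xi$, completing the step. I expect the only real subtlety to be the bookkeeping needed to guarantee $a_0 \notin \tau$ — this is exactly what makes the key identity applicable — and in particular the separate treatment of $\xi = 0$, where the inequality $\Ord~(M^\tau)^{a_0} \geq \xi$ is vacuous and one must instead extract $a_0$ from a nonempty member of $M^\tau$. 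Everything else is a routine transfinite induction; alternatively, Lemma \ref{lem:cal for Ord M} could be used to organise the successor/limit dichotomy for $\alpha$, but the direct appeal to the definition above avoids splitting into cases.
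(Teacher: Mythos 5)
Your proof is correct and takes essentially the same approach as the paper: both arguments are a transfinite induction on $\alpha$ whose engine is that if every single-element extension satisfied $\Ord~M^{\tau\cup\{a\}}<\xi$, the definition would force $\Ord~M^\tau\leq\xi<\alpha$, which is absurd. The only difference is presentational --- the paper runs the induction by contradiction (using the inductive hypothesis to exclude the whole interval $[\xi,\alpha)$), while you phrase it directly via a one-step descent $a_0$ followed by the inductive hypothesis, spelling out the identity $M^{\tau\cup\{a\}}=(M^\tau)^a$ and the $\xi=0$ case that the paper leaves implicit.
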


\begin{proof}
We apply the transfinite induction on $\alpha$. If $\alpha=0$, the result holds trivially. Assume that the result holds for all $\alpha < \alpha_0$. We consider $M \subseteq \Fin~L$ and $\tau \in \Fin~L \cup \{\varnothing\}$ such that $\Ord~M^\tau = \alpha_0$, and let $\xi< \alpha_0$ be an ordinal number. Suppose that there is no $\sigma \in \Fin~L \cup \{\varnothing\}$ such that $\Ord~M^{\tau \cup \sigma} = \xi$. Then by the inductive assumption, there is no $\sigma' \in \Fin~L \cup \{\varnothing\}$ such that $\xi \leq \Ord~M^{\tau \cup \sigma'} < \alpha_0$. Then $\Ord~M^{\tau \cup \{a\}} < \xi$ for each $a\in L \setminus \tau$. This implies that $\Ord~M^\tau \leq \xi < \alpha_0$, which leads to a contradiction.
\end{proof}

\begin{prop}\label{prop:ordinal finite imply countable}
Let $L$ be a countable set and $M \subseteq \Fin~L$. If $\Ord~M< \infty$, then $\Ord~M < \omega_1$. Here $\omega_1$ is the first uncountable ordinal number.
\end{prop}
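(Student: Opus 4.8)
The plan is to argue by contradiction, exploiting the well-ordering of the ordinals together with the regularity of $\omega_1$. Suppose the statement fails. Since the ordinals are well-ordered, among all pairs $(L, M)$ with $L$ countable, $M \subseteq \Fin L$ and $\Ord M < \infty$ but $\Ord M \geq \omega_1$, I would select one for which the ordinal $\alpha := \Ord M$ is as small as possible.

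Next I would descend one level in the recursion defining $\Ord$. By definition, $\Ord M \leq \alpha$ forces $\Ord M^a < \alpha$ for every $a \in L$; in particular each $M^a$ again lies inside $\Fin L$ (in fact inside $\Fin(L \setminus \{a\})$), the ambient set is still countable, and $\Ord M^a < \alpha < \infty$. By the minimality of $\alpha$, none of the pairs $(L, M^a)$ can be a counterexample, so $\Ord M^a < \omega_1$ for every $a \in L$.

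Then I would invoke regularity: the value $\xi := \sup\{\Ord M^a : a \in L\}$ is a supremum over the countable index set $L$ of countable ordinals, hence is itself a countable ordinal, $\xi < \omega_1$. By Lemma \ref{lem:cal for Ord M}, $\Ord M$ equals either $\xi$ or $\xi + 1$, both of which are strictly below $\omega_1$. This contradicts $\Ord M = \alpha \geq \omega_1$ and completes the argument.

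The only genuinely nontrivial ingredient is the set-theoretic fact that a countable supremum of countable ordinals remains countable (equivalently, that $\omega_1$ is a regular cardinal); everything else is bookkeeping drawn from the recursive definition of $\Ord$ and from Lemma \ref{lem:cal for Ord M}. I expect the main point to watch is ensuring the recursion stays within countable ambient sets, so that the comparison $\Ord M^a < \Ord M$ and the minimality argument apply verbatim at the descended level.
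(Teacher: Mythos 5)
Your proof is correct, so let me compare it with the paper's. Both arguments are by contradiction and ultimately rest on the same two ingredients, Lemma \ref{lem:cal for Ord M} and the regularity of $\omega_1$ (a countable supremum of countable ordinals is countable), but the descent is organized differently. The paper first invokes the intermediate-value property of $\Ord$ (Lemma \ref{lem:ordinal hereditary property}): from $\Ord~M \geq \omega_1$ it extracts a $\tau$ with $\Ord~M^{\tau}$ equal to $\omega_1$ \emph{exactly}, and then derives the contradiction at that level, using Lemma \ref{lem:cal for Ord M} together with the fact that $\omega_1$ is not a successor ordinal to write $\omega_1 = \sup\{\Ord~M^{\tau\cup\{a\}} : a \in L\setminus\tau\}$, a countable supremum of countable ordinals. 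You instead take a counterexample minimizing $\alpha = \Ord~M$ (equivalently, run transfinite induction on $\Ord~M$), descend a single step of the recursion to get $\Ord~M^a < \alpha$, hence $\Ord~M^a < \omega_1$ by minimality, and conclude via regularity and Lemma \ref{lem:cal for Ord M} that $\Ord~M \leq \xi+1 < \omega_1$. What your route buys is self-containedness: you bypass Lemma \ref{lem:ordinal hereditary property} entirely (a lemma whose own proof requires a separate transfinite induction), and you only need the limit-ordinal property of $\omega_1$ in the trivial form $\xi < \omega_1 \Rightarrow \xi+1 < \omega_1$, rather than in the essential way the paper uses it. What the paper's route buys is the hereditary property itself as a reusable statement of independent interest (it is the engine of Radul's original argument for transfinite asymptotic dimension, which the paper is following); your inlined induction proves the proposition but produces no such intermediate tool.
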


\begin{proof}
Suppose the contrary, \emph{i.e.}, $\Ord~M \geq \omega_1$. By Lemma \ref{lem:ordinal hereditary property}, we can choose $\tau \in \Fin~L \cup \{\varnothing\}$ such that $\Ord~M^\tau = \omega_1$. Hence for each $a\in L \setminus \tau$, we have $\Ord~M^{\tau \cup \{a\}} = \xi_a < \omega_1$. From Lemma \ref{lem:cal for Ord M} and the fact that $\omega_1$ is \emph{not} a successor ordinal, we obtain $\omega_1=\sup\{\xi_a: a\in L \setminus \tau\}$. Since each $\xi_a$ is countable and $L$ is also countable, we obtain a contradiction.
\end{proof}

\subsection{Notions from metric geometry}

Here we collect necessary notions and tools from metric geometry and coarse geometry (see, \emph{e.g.}, \cite{NY12}).

Let $(X, d)$ be a metric space and $U,V\subseteq X$. Denote
\[
\diam U=\sup\{d(x,y): x,y\in U\}
\quad \text{and} \quad
d(U,V)=\inf\{d(x,y): x\in U,y\in V\}.
\]
For $R>0$ and a family $\U$ of subsets of $X$, we say that $\U$ is a \emph{cover (of $X$)} if $X = \bigcup \U$. We say that $\U$ is \emph{$R$-bounded} if
\[
\diam \U:= \sup \left\{\diam  U: U\in \U\right\}\leq R,
\]
and $\U$ is \emph{uniformly bounded} if it is $R$-bounded for some $R>0$.
For $r>0$, we say that $\U$ is \emph{$r$-disjoint} if $d(U,V)\geq r$ for every $U,V\in \U$ with $U\neq V$. We say that $\U$ is \emph{strictly disjoint} if $\U$ is \emph{$r$-disjoint} for some $r>0$.

A discrete metric space $(X,d)$ is said to have \emph{bounded geometry} if for any $r>0$, the number $\sup\{|B_r(x)|:x\in X\}$ is finite, where $B_r(x):=\{y\in X:d(x,y)\leq r\}$.
%For $C>0$, we say $(X,d)$ is \emph{$C$-uniformly discrete} if $d(x,y)\geq C$ for any $x\neq y\in X$, and $(X,d)$ is \emph{uniformly discrete} if it is $C$-uniformly discrete for some $C>0$.
For $A \subseteq X$ and $R>0$, denote $\Nd_R(A):=\{x\in X: d(x,A) < R\}$ the \emph{$R$-neighbourhood} of $A$.

For a function $f$ on a discrete metric space $(X,d)$, denote its \emph{support} by
\[
\supp(f):=\{x\in X: f(x) \neq 0\}.
\]
Given $\alpha>0$, say that a function $f$ on $(X,d)$ is \emph{$\alpha$-H\"{o}lder} if there exists $C>0$ such that $|f(x) - f(y)| \leq Cd(x,y)^\alpha$ for any $x,y\in X$. The \emph{$\alpha$-H\"{o}lder constant} of $f$ is defined to be the minimal $C$ satisfying the above inequality. When $\alpha=1$, then $f$ is called \emph{Lipschitz} and in this case, the minimal $C$ is also called the \emph{Lipschitz constant of $f$}.

%Given $L>0$, say that a function $f$ on $(X,d)$ is \emph{$L$-Lipschitz} if $|f(x) - f(y)| \leq Ld(x,y)$ for any $x,y\in X$.
%The \emph{Lipschitz constant} of $f$ is defined to be the minimal $L$ such that $f$ is $L$-Lipschitz.

We prove the following result for later use.

%\begin{lem}\label{lem:cover for C(q)}
%Let $(X,d)$ be a metric space and  $q \in \NN$. For any $t_0 < t_1< \cdots < t_n$ in $\{2^k q: k\in \NN\}$ and $t_i$-disjoint uniformly bounded family $\U_i$ of subsets of $X$ for $0\leq i \leq n$, there exist functions $f_0, f_1, \cdots,f_n$ on $X$ satisfying the following:
%\begin{enumerate}
%  \item $\supp(f_i) \subset \Nd_{t_i/2}(\bigcup \U_i)$ for $i=0,1,\cdots,n$;\\[-.3cm]
%  \item $\sum_{i=0}^n f_i \equiv 1$ on $\bigcup_{i=0}^n (\bigcup  \U_i)$; \\[-.3cm]
%  \item each $f_i$ is Lipschitz with Lipschitz constant $L_i$ satisfying $\sum_{i=1}^n L_i<6/q$.
%\end{enumerate}
%\end{lem}

\begin{lem}\label{lem:cover for C(q)}
Let $(X,d)$ be a discrete metric space and  $q \in \NN$. For any $t_0 < t_1< \cdots < t_n$ in $\{3\cdot q^2 \cdot 2^{2k}: k\in \NN\setminus \{0\}\}$ and $t_i$-disjoint uniformly bounded family $\U_i$ of subsets of $X$ for $0\leq i \leq n$, there exist non-negative functions $f_0, f_1, \cdots,f_n$ on $X$ satisfying the following:
\begin{enumerate}
  \item $\supp(f_i) \subset \Nd_{t_i/2}(\bigcup \U_i)$ for $i=0,1,\cdots,n$;\\[-.3cm]
  \item $\sum_{i=0}^n f_i^2 \equiv 1$ on $\bigcup_{i=0}^n (\bigcup  \U_i)$; \\[-.3cm]
  \item each $f_i$ is $1/2$-H\"{o}lder with constant $C_i \leq \sqrt{3/t_i}$, and hence $\sum_{i=0}^n C_i < 1/q$.
\end{enumerate}
\end{lem}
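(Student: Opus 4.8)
The plan is to build the $f_i$ as square roots of a telescoping Lipschitz partition of unity, exploiting the elementary inequality $|\sqrt{a}-\sqrt{b}| \le \sqrt{|a-b|}$ for $a,b \ge 0$, which converts a Lipschitz bound on $f_i^2$ into a $1/2$-H\"older bound on $f_i$ with square-rooted constant.

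First I would introduce, for each $i$, the bump function
\[
\phi_i(x) = \max\big\{0,\; 1 - \tfrac{2}{t_i}\, d(x, \bigcup \U_i)\big\}.
\]
Since $d(\cdot, \bigcup\U_i)$ is $1$-Lipschitz, $\phi_i$ takes values in $[0,1]$, is Lipschitz with constant $2/t_i$, equals $1$ on $\bigcup\U_i$, and is supported in $\Nd_{t_i/2}(\bigcup\U_i)$ (using that $\Nd_R$ is defined with a strict inequality). I would then set
\[
g_i = \phi_i \prod_{j=i+1}^n (1-\phi_j), \qquad f_i = \sqrt{g_i}.
\]
Property (1) is immediate because $g_i$ carries the factor $\phi_i$, and every $f_i$ is non-negative. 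For property (2), writing $P_k = \prod_{j=k}^n(1-\phi_j)$ gives the telescoping identity $g_i = P_{i+1}-P_i$, so that $\sum_{i=0}^n f_i^2 = \sum_i g_i = 1 - \prod_{j=0}^n (1-\phi_j)$; at a point of $\bigcup_i\bigcup\U_i$ some $\phi_{i_0}$ equals $1$, which kills the product, so the sum equals $1$ there.

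The technical heart is property (3). Using that a product of $[0,1]$-valued functions has Lipschitz constant at most the sum of the individual constants, I obtain $\mathrm{Lip}(g_i) \le \sum_{j \ge i} 2/t_j$. Writing each $t_j = 3q^2 \cdot 4^{k_j}$ with $k_i < k_{i+1} < \cdots$ a strictly increasing sequence of positive integers, the geometric estimate $\sum_{j\ge i} 4^{-k_j} \le \tfrac{4}{3}\, 4^{-k_i}$ gives $\mathrm{Lip}(g_i) \le \tfrac{8}{9}\cdot \tfrac{1}{q^2 4^{k_i}} < \tfrac{3}{t_i}$. The square-root inequality then shows $f_i$ is $1/2$-H\"older with constant $C_i \le \sqrt{\mathrm{Lip}(g_i)} \le \sqrt{3/t_i} = 1/(q\, 2^{k_i})$. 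Finally, as the $k_i$ are distinct positive integers, $\sum_i C_i \le \tfrac{1}{q}\sum_i 2^{-k_i} < \tfrac{1}{q}\sum_{m=1}^\infty 2^{-m} = \tfrac1q$, which is the asserted bound.

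I expect the only delicate point to be matching the constants: one must check that the telescoping product does not accumulate too much Lipschitz mass across scales, and this is precisely where the dyadic spacing built into the set $\{3q^2\, 2^{2k}\}$ is used, the surplus factor $8/9 < 1$ being exactly what leaves room for the $\sqrt{3/t_i}$ bound. I note in passing that the disjointness and uniform-boundedness hypotheses are not actually invoked in these estimates; they belong to the ambient setup in which the lemma is later applied.
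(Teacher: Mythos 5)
Your proof is correct, and it reaches the partition of unity by a genuinely different decomposition than the paper's. Both arguments start from the same bump functions (your $\phi_i$ is exactly the paper's $g_i$), both convert a Lipschitz bound on $f_i^2$ into a $1/2$-H\"older bound on $f_i$ via $|\sqrt{a}-\sqrt{b}|\le\sqrt{|a-b|}$, and both close with the same dyadic computation $\sqrt{3/t_i}=1/(q2^{k_i})$, whose sum over distinct positive integers $k_i$ is less than $1/q$. The difference is in the telescoping. The paper telescopes maxima, setting $f_i^2=\max\{g_n,\dots,g_i\}-\max\{g_n,\dots,g_{i+1}\}$, so the Lipschitz constant of each $f_i^2$ involves only the two scales $t_i$ and $t_{i+1}$, namely $2/t_i+2/t_{i+1}\le 3/t_i$; this needs only the separation of \emph{consecutive} scales (a factor of $2$ would already suffice). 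You telescope products, setting $f_i^2=\phi_i\prod_{j>i}(1-\phi_j)=P_{i+1}-P_i$, which is the more standard construction of a partition of unity and makes property (2) transparent through the identity $\sum_i f_i^2=1-\prod_{j}(1-\phi_j)$; the price is that the Lipschitz constant accumulates contributions from \emph{all} scales $j\ge i$, so you need the full geometric series $\sum_{j\ge i}2/t_j\le\tfrac{8}{9}\cdot\tfrac{1}{q^2 4^{k_i}}<3/t_i$, which the spacing built into $\{3q^2\cdot 2^{2k}\}$ accommodates with the margin $8/9<1$ you point out. So the paper's construction asks less of the scales, while yours is more systematic and keeps the covering identity cleaner; both land under the same bound $C_i\le\sqrt{3/t_i}$. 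Your closing observation is also accurate: neither proof uses the disjointness or uniform boundedness of the families; those hypotheses matter only when the lemma is invoked in the proof of Theorem \ref{introthm:trnudim and trasdim}, where $t_i$-disjointness of $\U_i$ makes the neighbourhoods $\Nd_{t_i/2}(U)$, $U\in\U_i$, pairwise disjoint, so that the corresponding block algebras embed in $C^*_u(X)$ and the maps $\Phi^{(i)}$ are order zero.
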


\begin{proof}
For each $i=0,1,\cdots,n$, we firstly construct a function $g_i$ on $X$ as follows:
\[
g_i(x):=\max\left\{ 1-\frac{d(x,\bigcup\U_i)}{t_i/2}, 0\right\}.
\]
Now we construct $f_0, f_1, \cdots, f_n$ inductively. Firstly, we set $f_n:=g_n^{1/2}$. Note that the Lipschitz constant of $g_n$ is bounded by $2/t_n$ and hence using the inequality $|a^{1/2} - b^{1/2}| \leq |a-b|^{1/2}$ for $a,b \geq 0$, we obtain that $f_n$ is $1/2$-H\"{o}lder with constant $C_n \leq \sqrt{2/t_n}$. Now set $f_{n-1}(x):=\left(\max\{g_n(x),g_{n-1}(x)\} - g_{n}(x)\right)^{1/2}$ for $x\in X$. Similarly, we have $C_{n-1} \leq \sqrt{2/t_{n-1} + 2/t_n} \leq \sqrt{3/t_{n-1}}$. Inductively for each $i=0,1,\cdots,n-1$, we set:
\[
f_{i}(x)=\left(\max\{g_n(x),g_{n-1}(x),...,g_{i}(x)\}-\max\{g_n(x),g_{n-1}(x),...,g_{i+1}(x)\}\right)^{1/2} \quad \text{for} \quad x\in X.
\]
Similarly, we have $C_i \leq \sqrt{2/t_i + 2/t_{i+1}} \leq \sqrt{3/t_i}$. Note that $\sqrt{t_0}, \sqrt{t_1}, \cdots, \sqrt{t_n}$ are distinct elements in $\{\sqrt{3} \cdot q\cdot 2^{k}: k\in \NN\}$, and hence we have
\[
\sum_{i=0}^n C_i \leq \sum_{i=0}^n \sqrt{\frac{3}{t_i}} <\frac{1}{q},
\]
which concludes the proof.
\end{proof}

\section{Transfinite nuclear dimension}\label{sec:trnudim}

In this section, we introduce our notion of transfinite nuclear dimension for $C^*$-algebras, and prove Theorem \ref{introthm:trnudim and nudim} and Theorem \ref{introthm:omega nudim and trnudim}.

A completely positive (abbreviated as ``c.p.'') map $\phi:A \to B$ between $C^*$-algebras is said to have \emph{order zero} if $\phi(a)\perp \phi(b)$ for any $a,b \in A_+$ with $a \perp b$. Here $a \perp b$ means $ab=ba=0$. Also abbreviate ``completely positive contractive'' as ``c.p.c.''.

We record the following known fact:

\begin{lem}\label{lem:easy fact for order 0 map}
Let $\phi:A \to B$ be a c.p. order zero map between $C^*$-algebras. Then $\|\phi(a) \phi(b)\| \leq \|\phi\|^2 \cdot \|ab\|$ for any $a,b\in A$.
\end{lem}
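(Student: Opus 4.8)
The plan is to reduce the estimate to the structure theorem for order zero maps due to Winter and Zacharias. Recall that theorem states: given a c.p.\ order zero map $\phi\colon A \to B$, if we set $C := C^*(\phi(A)) \subseteq B$, then there exist a positive element $h$ in the center of the multiplier algebra $M(C)$ with $\|h\| = \|\phi\|$, together with a $*$-homomorphism $\pi\colon A \to M(C)$ whose image commutes with $h$, such that
\[
\phi(a) = h\,\pi(a) \qquad \text{for all } a \in A.
\]
I would invoke this as the single key input; essentially all the content of the lemma is packaged inside it.

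Granting the structural decomposition, the proof is a short computation. First I would use the factorisation twice and then push $h$ past $\pi(a)$ using the fact that $h$ commutes with the image of $\pi$:
\[
\phi(a)\phi(b) = h\,\pi(a)\,h\,\pi(b) = h^2\,\pi(a)\pi(b) = h^2\,\pi(ab),
\]
where the last equality is just multiplicativity of the $*$-homomorphism $\pi$. Taking norms and using submultiplicativity gives
\[
\|\phi(a)\phi(b)\| = \|h^2\,\pi(ab)\| \leq \|h\|^2\,\|\pi(ab)\|.
\]

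To finish I would substitute the two facts provided by the structure theorem and by general $C^*$-theory: namely $\|h\| = \|\phi\|$, and $\|\pi(ab)\| \leq \|ab\|$ since any $*$-homomorphism between $C^*$-algebras is contractive. This yields $\|\phi(a)\phi(b)\| \leq \|\phi\|^2\,\|ab\|$ for all $a,b \in A$, as required; note that no positivity of $a$ or $b$ is needed, since $\pi$ is defined on all of $A$. There is no genuine obstacle once the structure theorem is in hand: the only point worth flagging is that the estimate is stated for arbitrary (not necessarily positive) elements, which is exactly why routing through the $*$-homomorphism $\pi$ — rather than arguing directly from the order zero condition on positive elements — is the clean approach.
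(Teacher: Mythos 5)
Your proposal is correct and follows essentially the same route as the paper: both invoke the Winter--Zacharias structure theorem $\phi(a)=h\,\pi(a)$ with $h$ commuting with $\pi(A)$ and $\|h\|=\|\phi\|$, then compute $\phi(a)\phi(b)=h\pi(a)h\pi(b)$, commute $h$ past $\pi$, use multiplicativity and contractivity of $\pi$, and take norms. (One trivial remark: the theorem places $h$ in $\mathcal{M}(C)\cap C'$ rather than in the center of $\mathcal{M}(C)$ as you state, but your argument only uses that $h$ commutes with $\pi(A)$, which the theorem does provide.)
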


\begin{proof}
By the theorem in \cite[Section 3]{WZ09}, there is a positive element $h \in \mathcal{M}(C) \cap C'$ with $\|h\|=\|\phi\|$ and a $\ast$-homomorpshim $\pi: A \to \mathcal{M}(C) \cap \{h\}' \subset B^{\ast \ast}$ such that $\phi(a) = \pi(a)h$ for $a\in A$, where $C$ is the $C^*$-subalgebra in $B$ generated by $\phi(A)$. Given $a,b \in A$, we have
\[
\|\phi(a)\phi(b) \|= \|\pi(a)h \pi(b)h\| = \|h \pi(ab) h\| \leq \|\phi\|^2\cdot \|ab\|,
\]
which concludes the proof.
\end{proof}

%We record the following known fact:
%
%\begin{lem}\label{lem:easy fact for order 0 map}
%Let $\phi:A \to B$ be a c.p. order zero map between $C^*$-algebras, and $A$ is unital. Then $\phi(a)\phi(b) = \phi(1_A)\phi(ab)$ for any $a,b\in A$.
%\end{lem}
%
%\begin{proof}
%By the theorem in \cite[Section 3]{WZ09}, there exists a $\ast$-homomorpshim $\phi: A \to \mathcal{M}(C) \cap \{h\}' \subset B^{\ast \ast}$ such that $\phi(a) = \pi(a)h$ for $a\in A$, where $h=\phi(1_A)$ and $C$ is the $C^*$-subalgebra in $B$ generated by $\phi(A)$. Given $a,b \in A$, we have
%\[
%\phi(a)\phi(b) = \pi(a)h \pi(b)h = h\pi(a) \pi(b) h = h \pi(ab) h = \phi(1_A)\phi(ab),
%\]
%which concludes the proof.
%\end{proof}

The following notion of nuclear dimension was introduced by Winter and Zacharias:

\begin{defn}[{\cite[Definition 2.1]{WZ10}}]\label{defn:nuclear dimension}
A $C^*$-algebra $A$ is said to have \emph{nuclear dimension at most $n$}, denoted by $\nudim~A \leq n$, if there exists a net $(F_\lambda, \psi_\lambda, \phi_\lambda)_{\lambda \in \Lambda}$ such that the $F_\lambda$ are finite dimensional $C^*$-algebras, and such that $\psi_\lambda: A \to F_\lambda$ and $\phi_\lambda: F_\lambda \to A$ are c.p. maps satisfying:
\begin{enumerate}[(i)]
 \item $\phi_\lambda \circ \psi_\lambda(a) \to a$ uniformly on finite subsets of $A$;
 \item $\|\psi_\lambda\| \leq 1$;
 \item for each $\lambda \in \Lambda$, $F_\lambda$ decomposes into $n+1$ ideals $F_\lambda = F_\lambda^{(0)} \oplus \cdots \oplus F_\lambda^{(n)}$ such that $\phi_{\lambda}^{(i)}:=\phi_\lambda\big|_{F_\lambda^{(i)}}$ is a c.p.c. order zero map for $i=0,1,\cdots,n$.
\end{enumerate}
For convenience, we denote $\psi_{\lambda}^{(i)}: A \to F^{(i)}_{\lambda}$ the $i$-th component of $\psi_{\lambda}$.
\end{defn}

The maps $\psi_{\lambda}^{(i)}$ can be combined into an order zero map as follows, which plays a key role when we define the notion of transfinite nuclear dimension later.

\begin{prop}[{\cite[Proposition 2.2]{Rob11}}]\label{prop:app order zero for psi}
Let $A$ be a $C^*$-algebra with nuclear dimension at most $n$. Then there exist families $\{\psi_{\lambda}^{(i)}\}_{\lambda \in \Lambda}$ and $\{\phi_{\lambda}^{(i)}\}_{\lambda \in \Lambda}$ for $i=0,1,\cdots,n$ satisfying conditions in Definition  \ref{defn:nuclear dimension} and moreover, the induced maps
\[
\psi^{(i)}: A \longrightarrow \prod_\lambda F_\lambda^{(i)}\big/\bigoplus_\lambda F_\lambda^{(i)} \quad \text{and} \quad \phi^{(i)}: \prod_\lambda F_\lambda^{(i)}\big/\bigoplus_\lambda F_\lambda^{(i)} \longrightarrow A_\Lambda:=\prod_\lambda A \big/\bigoplus_\lambda A
\]
are c.p.c. order zero for $i=0,1,\cdots,n$. In this case, we have $\iota = \sum_{i=1}^n \phi^{(i)} \circ \psi^{(i)}$, where $\iota: A \to A_\Lambda$ is induced by the diagonal embedding.
\end{prop}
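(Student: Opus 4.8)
The plan is to start from a net $(F_\lambda,\psi_\lambda,\phi_\lambda)_{\lambda\in\Lambda}$ witnessing $\nudim~A\le n$ as in Definition \ref{defn:nuclear dimension}, form the product-quotient algebras $\prod_\lambda F_\lambda^{(i)}\big/\bigoplus_\lambda F_\lambda^{(i)}$ and $A_\Lambda$, and let $\psi^{(i)},\phi^{(i)}$ be the maps induced by the families $\{\psi_\lambda^{(i)}\}$ and $\{\phi_\lambda^{(i)}\}$. Since the $i$-th component satisfies $\|\psi_\lambda^{(i)}\|\le\|\psi_\lambda\|\le 1$ and products of c.p.c. maps descend to c.p.c. maps on the quotient, the induced $\psi^{(i)}$ and $\phi^{(i)}$ are automatically c.p.c.; and because $\phi_\lambda\psi_\lambda(a)\to a$, the map induced by $\sum_{i=0}^n\phi_\lambda^{(i)}\psi_\lambda^{(i)}=\phi_\lambda\psi_\lambda$ is exactly $\iota$, so the identity $\iota=\sum_{i=0}^n\phi^{(i)}\circ\psi^{(i)}$ is automatic. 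The whole content is therefore the order zero property of the two induced families.

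For $\phi^{(i)}$ this is essentially free and uses Lemma \ref{lem:easy fact for order 0 map}: given orthogonal positive elements of the quotient, lift them to positive sequences $(x_\lambda),(y_\lambda)$ with $\|x_\lambda y_\lambda\|\to 0$; since each $\phi_\lambda^{(i)}$ is order zero and contractive, $\|\phi_\lambda^{(i)}(x_\lambda)\phi_\lambda^{(i)}(y_\lambda)\|\le\|x_\lambda y_\lambda\|\to 0$, so the images are orthogonal and $\phi^{(i)}$ is order zero. I would next record the (still easy) intermediate fact that each composite $\phi^{(i)}\circ\psi^{(i)}$ is order zero: since $\iota=\sum_j\phi^{(j)}\psi^{(j)}$ with every summand completely positive, each $\phi^{(i)}\psi^{(i)}$ is dominated by the $\ast$-homomorphism $\iota$ in the completely positive order, so for $a\perp b$ in $A_+$ one has $0\le\phi^{(i)}\psi^{(i)}(a)\le\iota(a)$ and $0\le\phi^{(i)}\psi^{(i)}(b)\le\iota(b)$ with $\iota(a)\perp\iota(b)$, forcing $\phi^{(i)}\psi^{(i)}(a)\perp\phi^{(i)}\psi^{(i)}(b)$ through the comparison of support projections.

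To pass from $\phi^{(i)}\psi^{(i)}$ order zero to $\psi^{(i)}$ order zero I would invoke the structure theorem underlying Lemma \ref{lem:easy fact for order 0 map} to write $\phi^{(i)}=\pi_i(\,\cdot\,)h_i$, with $\pi_i$ a $\ast$-homomorphism and $h_i$ a positive contraction commuting with the range of $\pi_i$. Then for $a\perp b$ the computation $\phi^{(i)}(\psi^{(i)}(a))\,\phi^{(i)}(\psi^{(i)}(b))=\pi_i\big(\psi^{(i)}(a)\psi^{(i)}(b)\big)h_i^2=0$ shows that $\psi^{(i)}(a)\psi^{(i)}(b)$ is carried by $\pi_i$ into the kernel of $h_i$. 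If $h_i$ had full support and $\pi_i$ were faithful this would give $\psi^{(i)}(a)\psi^{(i)}(b)=0$ at once; the obstruction is exactly that the fibrewise positive contractions coming from the $\phi_\lambda^{(i)}$ may have spectrum accumulating at $0$, so that $\phi^{(i)}$ fails to be faithful on the product-quotient algebra.

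This faithfulness defect is the main obstacle, and overcoming it is where the families must genuinely be replaced rather than merely reorganised. The plan is to reparametrise at the level of the approximating system: using functional calculus for order zero maps, replace $\phi_\lambda^{(i)}=\pi_\lambda^{(i)}(\,\cdot\,)h_\lambda^{(i)}$ by $\pi_\lambda^{(i)}(\,\cdot\,)g(h_\lambda^{(i)})$ for a suitable $g$ vanishing at $0$, discard the finite-dimensional blocks on which $h_\lambda^{(i)}$ is too small, and compose $\psi_\lambda^{(i)}$ with the corresponding central compression, so that in the limit $\phi^{(i)}$ becomes faithful and the displayed identity forces $\psi^{(i)}(a)\psi^{(i)}(b)=0$. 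The delicate bookkeeping is to choose the cut-offs together with a diagonal refinement of the net so that the point-norm convergence $\phi_\lambda\psi_\lambda(a)\to a$, the contractivity, and the order zero property of $\phi^{(i)}$ all survive the modification simultaneously; this is where I expect the difficulty to concentrate.
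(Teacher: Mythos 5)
Your treatment of the routine parts is correct and matches the paper's route for those parts: the induced maps are automatically c.p.c., the identity $\iota=\sum_{i=0}^n\phi^{(i)}\circ\psi^{(i)}$ follows from the point-norm convergence, and your argument for $\phi^{(i)}$ being order zero (lift orthogonal positive elements to the product and apply Lemma \ref{lem:easy fact for order 0 map} fibrewise) is exactly the content of Lemma \ref{lem:lifting lemma}, which is one of the two ingredients the paper cites. Your intermediate observations are also sound: each composite $\phi^{(i)}\circ\psi^{(i)}$ is order zero because it is a c.p. map dominated by the $\ast$-homomorphism $\iota$, and if $\phi^{(i)}$ could be arranged to be injective then $\psi^{(i)}$ order zero would follow, since an injective order zero map reflects orthogonality of positive elements (from $\pi_i(uv)h_i^2=0$ one gets successively $\pi_i(uv)h_i=0$, $\pi_i(vu^2v)h_i=0$, hence $\phi^{(i)}(vu^2v)=0$, hence $(uv)^*(uv)=vu^2v=0$).

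However, the proof stops exactly where the real content of the proposition lies. The assertion that the approximating system can be \emph{replaced} by one whose induced maps $\psi^{(i)}$ are order zero is not an afterthought to be settled by bookkeeping; it is the substance of the statement, and your last paragraph only describes a plan for it (spectral cut-offs $g(h_\lambda^{(i)})$, discarding blocks, compressing $\psi_\lambda^{(i)}$, diagonalising the net) while explicitly conceding that you have not checked that contractivity, the convergence $\phi_\lambda\psi_\lambda(a)\to a$, and the order zero property of $\phi^{(i)}$ survive these modifications simultaneously. The difficulty is genuine: altering $h_\lambda^{(i)}$ and compressing $\psi_\lambda^{(i)}$ changes the composites $\phi_\lambda^{(i)}\psi_\lambda^{(i)}$, which threatens precisely the identity $\iota=\sum_i\phi^{(i)}\psi^{(i)}$ on which your own domination argument rests, so the two halves of your argument are in tension and must be reconciled by quantitative estimates you do not supply. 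The paper sidesteps this entirely by quoting \cite[Proposition 3.2]{WZ10}, where Winter and Zacharias carry out exactly this kind of modification of the system (in particular exploiting that the kernel of an order zero map on a finite-dimensional algebra is an ideal, so one can cut by a central projection without disturbing the composites), and then combines it with Lemma \ref{lem:lifting lemma} for the $\phi^{(i)}$ side. In short: your proposal is a correct reduction plus a correct treatment of the easy half, but the hard half is a sketch, not a proof.
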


Recall that the proof of Proposition \ref{prop:app order zero for psi} is a combination of \cite[Proposition 3.2]{WZ10} together with the following lemma (whose proof is contained in that for \cite[Proposition 2.2]{Rob11}):

\begin{lem}\label{lem:lifting lemma}
Let $\phi: C \to D$ be a c.p.c. order zero map between $C^*$-algebras and $\phi|_I=0$ for some closed two-sided ideal $I$ in $C$. Then the induced map $\tilde{\phi}: C/I \to D$ has order zero.
\end{lem}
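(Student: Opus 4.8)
The plan is first to dispense with the routine part and then to isolate order zero as the real content. Since $\phi|_I = 0$, the assignment $\tilde\phi(c+I) := \phi(c)$ is well defined and satisfies $\phi = \tilde\phi \circ q$, where $q\colon C \to C/I$ is the quotient $\ast$-homomorphism. Complete positivity of $\tilde\phi$ then follows from that of $\phi$ together with surjectivity of $q$: matrix-positive elements of $C/I$ lift to positive elements of $C$ (via $M_n(C/I) \cong M_n(C)/M_n(I)$), which $\phi$ sends to positive elements. Contractivity follows by lifting a positive element of norm $\|c+I\|$ to a positive element of the same norm in $C$. I would state these verifications briefly and move on, since the genuine content lies in checking the order zero condition for $\tilde\phi$.

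The heart of the matter is to show $\tilde\phi(\bar a)\tilde\phi(\bar b) = 0$ whenever $\bar a, \bar b \in (C/I)_+$ satisfy $\bar a \bar b = 0$. My plan is to invoke the Winter--Zacharias structure theorem for order zero maps, the same tool already used in the proof of Lemma \ref{lem:easy fact for order 0 map}: writing $E := C^*(\phi(C))$, there is a positive contraction $h \in \mathcal{M}(E)\cap E'$ and a $\ast$-homomorphism $\pi\colon C \to \mathcal{M}(E)\cap \{h\}'$ with $\phi(c) = h\pi(c) = \pi(c)h$ for all $c\in C$. The key algebraic identity I would extract is
\[
\phi(a)\phi(b) = h\pi(a)\,h\pi(b) = h^2\pi(a)\pi(b) = h^2\pi(ab) = h\,\phi(ab),
\]
where I use that $h$ commutes with $\pi(a)$ and that $\pi$ is multiplicative.

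With this identity in hand, the proof closes quickly. Given orthogonal $\bar a, \bar b \in (C/I)_+$, I would choose positive lifts $a, b \in C$; then $ab$ is a lift of $\bar a\bar b = 0$, so $ab \in I$, whence $\phi(ab) = 0$ by hypothesis. Therefore $\tilde\phi(\bar a)\tilde\phi(\bar b) = \phi(a)\phi(b) = h\,\phi(ab) = 0$, as required.

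The step I expect to be the main obstacle — or at least the place where care is needed — is precisely the passage from the algebraic information $ab \in I$ (equivalently $\phi(ab)=0$) to the vanishing of the product $\phi(a)\phi(b)$. A naive approach would try to lift the orthogonal pair $\bar a, \bar b$ to an orthogonal pair in $C$, but orthogonality need not survive lifting through a quotient, so that route is problematic. The structure theorem circumvents this entirely by reducing $\phi(a)\phi(b)$ to $h\,\phi(ab)$, turning the desired orthogonality of the images into the already-known vanishing of $\phi$ on $I$.
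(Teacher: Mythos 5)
Your argument is correct, but two things need to be said. First, the paper does not actually prove this lemma: it defers to \cite[Proposition 2.2]{Rob11}, and the argument contained there is precisely the elementary route you dismiss in your closing paragraph. Contrary to your claim, orthogonality of \emph{positive} elements does survive lifting through a quotient: given $\bar a,\bar b\in (C/I)_+$ with $\bar a\bar b=0$, lift the self-adjoint element $\bar a-\bar b$ to a self-adjoint $x\in C$ and put $a:=x_+$, $b:=x_-$. Since the quotient map $q$ commutes with continuous functional calculus, and since $\bar a$, $\bar b$ are exactly the positive and negative parts of $\bar a -\bar b$ (uniqueness of the orthogonal decomposition), one gets $q(a)=\bar a$, $q(b)=\bar b$, while $ab=ba=0$ holds automatically. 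Then $\tilde\phi(\bar a)=\phi(a)\perp\phi(b)=\tilde\phi(\bar b)$ follows directly from $\phi$ being order zero, with no structure theorem at all. So the route you call ``problematic'' is in fact the standard proof, and it is shorter.

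Second, your own proof is nevertheless valid: the identity $\phi(a)\phi(b)=h\pi(a)h\pi(b)=h^2\pi(ab)=h\,\phi(ab)$ is correct, positive lifts $a,b$ of $\bar a,\bar b$ satisfy $ab\in\ker q=I$, hence $\phi(ab)=0$ and so $\phi(a)\phi(b)=0$; taking adjoints (the images are positive) gives full orthogonality. The routine parts (well-definedness, complete positivity via positive lifting in matrix amplifications, contractivity via the fact that the norm of a c.p.\ map is attained on positive contractions) are also fine. What your approach buys is the sharper identity $\phi(a)\phi(b)=h\,\phi(ab)$, a quantitative refinement of Lemma \ref{lem:easy fact for order 0 map}, and consistency with the toolkit the paper already uses; what it costs is invoking the Winter--Zacharias structure theorem where a two-line functional-calculus argument suffices --- together with a mistaken justification for preferring it.
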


To define the transfinite extension of the nuclear dimension, let us firstly introduce the following set:

\begin{defn}\label{defn:set for trnudim}
Let $A$ be a $C^*$-algebra, $\F \subseteq A$ be finite and $q \in \NN$. Define $N(\F,q) \subseteq \Fin~\NN$ as follows: $\sigma = \{t_0, t_1, \cdots, t_n\} \in \Fin~\NN$ with $t_0 < t_1 < \cdots < t_n$ belongs to $N(\F,q)$ if and only if $t_0 \geq q$ and there does \emph{not} exist $(F, \psi, \phi)$ where
\begin{enumerate}
 \item[(a)] $F=F^{(0)} \oplus \cdots \oplus F^{(n)}$ is a finite-dimensional $C^*$-algebra;
 \item[(b)] $\psi=\psi^{(0)} \oplus \cdots \oplus \psi^{(n)}: A \to F$ is a c.p.c. map with $\psi^{(i)}:A \to F^{(i)}$ for each $i$;
 \item[(c)] $\phi: F \to A$ is c.p. and the restriction $\phi^{(i)}:=\phi|_{F^{(i)}}: F^{(i)} \to A$ is contractive and order zero for each $i$,
\end{enumerate}
satisfying the following:
\begin{enumerate}
 \item for any $i=0,1,\cdots,n$ and $a,b \in \F$, there exist $a', b' \in A$ with $\|a-a'\| < 1/q$ and $\|b-b'\|<1/q$ such that $\|\psi^{(i)}(a')\psi^{(i)}(b')\| < 1/t_i + \|a'b'\|$;
 \item $\|\phi\circ \psi(a) - a\|<1/q$ for all $a\in \F$.
\end{enumerate}
To avoid ambiguity, sometimes we also denote $N_A(\F,q):=N(\F,q)$.
\end{defn}

For convenience, we call $\sigma = \{t_0, t_1, \cdots, t_n\} \in \Fin~\NN$ \emph{canonical} if $t_0 < t_1 < \cdots < t_n$. For canonical $\sigma = \{t_0,t_1, \cdots, t_n\}$ and $\tau = \{s_0, s_1, \cdots, s_m\}$ in $\Fin~\NN$, denote $\sigma \leq \tau$ if $n = m$ and $t_i \leq s_i$ for $i=0,1,\cdots, n$.

We record the following elementary property for the set $N(\F,q)$, whose proof is straightforward and hence omitted.

\begin{lem}\label{lem:elementary property for N(F,q)}
With the same notation as above, we have:
\begin{enumerate}
 \item $N(\F,q)$ is inclusive, \emph{i.e.}, for any $\sigma \in N(\F,q)$ any $\tau \subseteq \sigma$, then $\tau \in N(\F,q)$.
 \item If $\sigma \in N(\F,q)$ and $\sigma \leq \tau$, then $\tau \in N(\F,q)$.
\end{enumerate}
\end{lem}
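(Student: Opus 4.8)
The plan is to verify both inclusiveness claims directly from Definition~\ref{defn:set for trnudim}, and the key observation driving both parts is that the nonexistence condition defining $N(\F,q)$ becomes \emph{easier to satisfy} as the index set shrinks or as the thresholds $t_i$ grow. For part (1), suppose $\sigma = \{t_0 < \cdots < t_n\} \in N(\F,q)$ and let $\tau \subseteq \sigma$ be canonical, say $\tau = \{t_{j_0} < \cdots < t_{j_m}\}$ with $\{j_0,\dots,j_m\} \subseteq \{0,\dots,n\}$. Note first that $t_{j_0} \geq t_0 \geq q$, so the threshold condition is inherited. Now I would argue by contraposition: assume $\tau \notin N(\F,q)$, so there exists an approximating triple $(F', \psi', \phi')$ witnessing failure for $\tau$, with $F' = F'^{(0)} \oplus \cdots \oplus F'^{(m)}$ and the estimate $\|\psi'^{(k)}(a')\psi'^{(k)}(b')\| < 1/t_{j_k} + \|a'b'\|$ for each $k$. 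The goal is to manufacture from this a triple witnessing failure for $\sigma$, which would contradict $\sigma \in N(\F,q)$.

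The mechanism for part (1) is to pad the triple with zero summands. Define $F = F^{(0)} \oplus \cdots \oplus F^{(n)}$ where $F^{(j_k)} := F'^{(k)}$ for the indices appearing in $\tau$, and $F^{(i)} := 0$ (the zero algebra) for each $i \in \{0,\dots,n\} \setminus \{j_0,\dots,j_m\}$. Correspondingly set $\psi^{(j_k)} := \psi'^{(k)}$ and $\psi^{(i)} := 0$ on the new summands, and likewise $\phi^{(j_k)} := \phi'^{(k)}$ with $\phi^{(i)} := 0$. Then $\psi = \bigoplus_i \psi^{(i)}$ remains c.p.c.\ (the zero components add nothing to the norm), and each $\phi^{(i)}$ is trivially contractive order zero on the zero summands while the surviving ones inherit these properties from $\phi'$. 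Crucially $\phi \circ \psi = \phi' \circ \psi'$ since the padding contributes nothing, so the approximation estimate $\|\phi\circ\psi(a) - a\| < 1/q$ holds. For condition~(1) of the definition applied to $\sigma$: on a surviving summand $i = j_k$ we have exactly the estimate $\|\psi^{(i)}(a')\psi^{(i)}(b')\| < 1/t_i + \|a'b'\|$ inherited from $\tau$; on a zero summand the left side is $0$, and since $1/t_i + \|a'b'\| > 0$ (taking $a'=a$, $b'=b$), the inequality holds trivially. Thus $(F,\psi,\phi)$ witnesses failure for $\sigma$, contradicting $\sigma \in N(\F,q)$, so $\tau \in N(\F,q)$.

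For part (2), the argument is even simpler and relies on the monotonicity of the bound $1/t_i$. Suppose $\sigma = \{t_0 < \cdots < t_n\} \in N(\F,q)$ and $\sigma \leq \tau = \{s_0 < \cdots < s_n\}$, so $t_i \leq s_i$ for all $i$ and in particular $s_0 \geq t_0 \geq q$. Again argue by contraposition: if $\tau \notin N(\F,q)$, there is a triple $(F,\psi,\phi)$ satisfying condition~(1) with the sharper requirement $\|\psi^{(i)}(a')\psi^{(i)}(b')\| < 1/s_i + \|a'b'\|$ and condition~(2). Since $1/s_i \leq 1/t_i$, the same $a', b'$ give $\|\psi^{(i)}(a')\psi^{(i)}(b')\| < 1/s_i + \|a'b'\| \leq 1/t_i + \|a'b'\|$, so the identical triple witnesses failure for $\sigma$ as well. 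This contradicts $\sigma \in N(\F,q)$, hence $\tau \in N(\F,q)$.

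I do not anticipate a genuine obstacle here, since both statements reduce to bookkeeping about how the defining data transports across inclusions and inequalities. The one point requiring a small amount of care is in part~(1): one must confirm that allowing a summand $F^{(i)}$ to be the zero $C^*$-algebra is consistent with conditions~(a)--(c) of Definition~\ref{defn:set for trnudim} (a zero c.p.\ map is vacuously contractive and order zero), and that the definition's quantifier ``for any $i = 0,1,\cdots,n$'' is satisfied at the zero summands by the trivial inequality. If one prefers to avoid zero summands altogether, an alternative is to absorb a dropped index into an adjacent surviving summand, but padding with zeros is the cleanest route and the reason the authors describe the proof as straightforward.
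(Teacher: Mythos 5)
Your proof is correct: the paper itself omits the argument as ``straightforward,'' and your two-step verification --- padding with zero summands (which are indeed admissible in conditions (a)--(c), as you note) for part (1), and the monotonicity $1/s_i \leq 1/t_i$ for part (2) --- is exactly the routine contrapositive check the authors intended.
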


The following is our transfinite extension of the nuclear dimension:

\begin{defn}\label{defn:trnudim}
For a $C^*$-algebra $A$ and an ordinal number $\alpha$, we say that the \emph{transfinite nuclear dimension} of $A$ is at most $\alpha$, denoted by $\trnudim~A \leq \alpha$, if for any $q \in \NN$ and any finite subset $\mathcal{F}\subseteq A$, we have $\Ord~N(\F,q) \leq \alpha$. Denote $\trnudim~A = \alpha$ if $\trnudim~A \leq \alpha$ but $\trnudim~A \leq \beta$ fails for any $\beta < \alpha$.
\end{defn}

Now we prove Theorem \ref{introthm:trnudim and nudim}, showing that transfinite nuclear dimension coincides with the nuclear dimension when taking values in natural numbers:

\begin{thm}[Theorem \ref{introthm:trnudim and nudim}]\label{thm:trnudim and nudim}
For a $C^*$-algebra $A$ and $n\in \NN$, we have $\trnudim~A \leq n$ if and only if $\nudim~A \leq n$.
\end{thm}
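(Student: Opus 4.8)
The statement is an equivalence, and I would prove the two implications separately, with the main work concentrated in showing $\nudim~A \leq n \Rightarrow \trnudim~A \leq n$. By Lemma \ref{lem:ordinal finite lemma}, the condition $\Ord~N(\F,q) \leq n$ is equivalent to saying that every $\sigma \in N(\F,q)$ satisfies $|\sigma| \leq n$, i.e.\ that no canonical $\sigma = \{t_0, \dots, t_n\}$ of cardinality $n+1$ lies in $N(\F,q)$. So the whole theorem reduces to a purely combinatorial reformulation: $\trnudim~A \leq n$ holds \emph{if and only if} for every finite $\F \subseteq A$, every $q \in \NN$, and every canonical $(n+1)$-element set $\{t_0 < \dots < t_n\}$ with $t_0 \geq q$, there \emph{does} exist a system $(F, \psi, \phi)$ as in Definition \ref{defn:set for trnudim} with $n+1$ summands satisfying conditions (1) and (2). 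This is the key observation that bridges the two sides, and I would state it explicitly at the outset.

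\textbf{The forward direction.} Assume $\trnudim~A \leq n$. Fix a finite $\F \subseteq A$ and $q \in \NN$. By the reformulation above, for every admissible $(n+1)$-tuple $t_0 < \dots < t_n$ (with $t_0 \geq q$) I obtain a system $(F_{\sigma}, \psi_{\sigma}, \phi_{\sigma})$ with exactly $n+1$ order-zero summands and the approximation $\|\phi_\sigma \circ \psi_\sigma(a) - a\| < 1/q$ for $a \in \F$. The plan is to let $q \to \infty$ and choose the $t_i$ growing suitably, producing a net indexed by $(\F, q)$ that witnesses $\nudim~A \leq n$. The only subtlety is the two-summand order-zero multiplicativity condition (1): as $t_i \to \infty$ the term $1/t_i$ vanishes, so condition (1) forces $\|\psi^{(i)}(a')\psi^{(i)}(b')\|$ to be asymptotically controlled by $\|a'b'\|$ up to an $1/q$-perturbation of $a,b$. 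Since Definition \ref{defn:nuclear dimension} itself does not require any such multiplicativity on the $\psi^{(i)}$ — only that $\phi^{(i)}$ be order zero — condition (1) is automatically a \emph{weaker} demand and can simply be discarded in this direction; the net $(F_\sigma, \psi_\sigma, \phi_\sigma)$ already satisfies (i)--(iii) of Definition \ref{defn:nuclear dimension} by construction.

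\textbf{The reverse direction, and the main obstacle.} Assume $\nudim~A \leq n$. Here I would invoke Proposition \ref{prop:app order zero for psi}: after passing to a suitable subnet I may assume the combined maps $\psi^{(i)} \colon A \to \prod_\lambda F_\lambda^{(i)} / \bigoplus_\lambda F_\lambda^{(i)}$ are themselves c.p.c.\ order zero, with $\iota = \sum_i \phi^{(i)} \circ \psi^{(i)}$. Given $\F$ and $q$, I must produce, for \emph{every} canonical $(n+1)$-set $\{t_0 < \dots < t_n\}$ with $t_0 \geq q$, a finite-dimensional system satisfying conditions (1) and (2); this shows no such $\sigma$ lies in $N(\F,q)$, hence $\Ord~N(\F,q) \leq n$. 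Conditions (2) is delivered directly by approximating the limit relation $\iota = \sum_i \phi^{(i)}\circ\psi^{(i)}$ at finitely many $\lambda$. The crux — and the step I expect to be the main obstacle — is verifying condition (1): I need, for each $i$ and each pair $a,b \in \F$, small perturbations $a', b'$ with $\|\psi^{(i)}(a')\psi^{(i)}(b')\| < 1/t_i + \|a'b'\|$. This is exactly where the order-zero property of the \emph{combined} map $\psi^{(i)}$ (not merely of $\phi^{(i)}$) is essential, and where Lemma \ref{lem:easy fact for order 0 map} enters: since $\psi^{(i)}$ is c.p.c.\ order zero, Lemma \ref{lem:easy fact for order 0 map} gives $\|\psi^{(i)}(a')\psi^{(i)}(b')\| \leq \|a'b'\|$ in the quotient $\prod_\lambda F_\lambda^{(i)}/\bigoplus_\lambda F_\lambda^{(i)}$. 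Lifting this inequality back to the finite level $F_\lambda^{(i)}$ incurs an error that tends to $0$ along the net; for $\lambda$ far enough out this error is below $1/t_i$, yielding (1) with $a' = a$, $b' = b$. The delicate bookkeeping is ensuring a \emph{single} index $\lambda$ (or finite-dimensional system) works simultaneously for all pairs in $\F$, all $i \leq n$, and the approximation in (2); this is a routine but careful diagonal/finite-intersection argument over the net, and I would organise it so that the $1/t_i$ slack absorbs the lifting error uniformly.
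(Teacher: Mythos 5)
Your proposal is correct and takes essentially the same route as the paper's own proof: both directions rest on Lemma \ref{lem:ordinal finite lemma} (plus inclusiveness) to reduce $\Ord~N(\F,q)\leq n$ to the non-membership of $(n+1)$-element sets, the forward direction extracts the approximating net for Definition \ref{defn:nuclear dimension} directly from Definition \ref{defn:set for trnudim} while discarding condition (1), and the reverse direction combines Proposition \ref{prop:app order zero for psi} with Lemma \ref{lem:easy fact for order 0 map} and an eventual-index (directedness) argument along the net, taking $a'=a$, $b'=b$ in condition (1). This matches the paper's argument in both structure and the key lemmas invoked.
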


\begin{proof}
\emph{Necessity}: By assumption, for any $q\in \NN$ and finite $\F \subseteq A$, we have $\Ord~ N(\mathcal{F},q) \leq n$. By Lemma \ref{lem:ordinal finite lemma}, for any $\sigma \in N(\F,q)$ we have $|\sigma| \leq n$, which implies that there exists a canonical $\sigma=\{t_0, t_1, \cdots,t_n\} \notin N(\F,q)$ with $t_0 \geq q$. This concludes $\nudim~A\leq n$.

\emph{Sufficiency}: Now assume $\nudim~A\leq n$. Then Proposition \ref{prop:app order zero for psi} provides families $\{\psi_{\lambda}^{(i)}\}_{\lambda \in \Lambda}$ and $\{\phi_{\lambda}^{(i)}\}_{\lambda \in \Lambda}$ for $i=0,1,\cdots,n$ satisfying the conditions therein. Given $q\in \NN$, finite $\F \subseteq A$ and a canonical $\sigma=\{t_0, t_1, \cdots,t_n\} \in \Fin~\NN$, let us fix an $i \in \{0,1,\cdots,n\}$. Lemma \ref{lem:easy fact for order 0 map} implies that $\|\psi^{(i)}(a)\psi^{(i)}(b)\| \leq \|ab\|$ for any $a,b \in A$. Hence there exists $\lambda_i \in \Lambda$ such that for any $\lambda \geq \lambda_i$, we have $\|\psi^{(i)}_\lambda(a)\psi^{(i)}_\lambda(b)\| < 1/t_i + \|ab\|$ for any $a,b \in \F$. Also choose $\lambda' \in \Lambda$ such that for any $\lambda \geq \lambda'$, we have
\[
\big\| \sum_{i=0}^n \phi^{(i)}_\lambda \circ \psi^{(i)}_\lambda(a) - a \big\| < 1/q \quad \text{for}\quad a\in \F.
\]
Take $\lambda'' \in \Lambda$ greater than each $\lambda_i$ for $i=0,1,\cdots,n$ and $\lambda'$. Then for each $\lambda \geq \lambda''$, we obtain $(F_\lambda, \phi_\lambda, \psi_\lambda)$ satisfying conditions (a)-(c) in Definition \ref{defn:set for trnudim} for the number $n$ such that
\begin{itemize}
 \item $\|\psi^{(i)}_\lambda(a)\psi^{(i)}_\lambda(b)\| < 1/t_i + \|ab\|$ for any $i=0,1,\cdots,n$ and $a,b \in \F$;
 \item $\|\phi_\lambda\circ \psi_\lambda(a) - a\|<1/q$ for all $a\in \F$.
\end{itemize}
If $\trnudim~A \leq n$ does not hold, then there exist $q_0 \in \NN$, finite $\F_0 \subseteq A$ such that $\Ord~N(\F_0,q_0) >n$. Hence by Lemma \ref{lem:ordinal finite lemma}, there exists $\tau \in N(\F_0,q_0)$ such that $|\tau| > n$. Now Lemma \ref{lem:elementary property for N(F,q)} implies that there exists $\sigma \in N(\F_0,q_0)$ with $|\sigma| = n+1$. This leads a contradiction by the analysis above.
\end{proof}

More generally, Robert considered the following notion:

\begin{defn}[{\cite[Definition 3.3]{Rob11}}]\label{defn:nudim at most omega}
A $C^*$-algebra $A$ is said to \emph{have nuclear dimension at most $\omega$} if for each $i \in \NN$ there are nets of c.p.c maps $\psi_{\lambda}^{(i)}:A \to F_{\lambda}^{(i)}$ and $\phi_{\lambda}^{(i)}:F_{\lambda}^{(i)}\to A$ with $F^{(i)}_{\lambda}$ finite dimensional $C^*$-algebras, such that
\begin{enumerate}
  \item the induced maps $\psi^{(i)}: A\to \prod_{\lambda}F_{\lambda}^{(i)}/\bigoplus F_{\lambda}^{(i)}$ and $\phi^{(i)}:\prod_{\lambda}F_{\lambda}^{(i)}/\bigoplus F_{\lambda}^{(i)}\to A_\Lambda $ are c.p.c. and order 0;
  \item $\iota(a) = \sum_{i=0}^{\infty}\phi^{(i)}\psi^{(i)}(a)$ for all $a\in A$.
\end{enumerate}
Here the notation $A_\Lambda$ and $\iota$ are the same as in Proposition \ref{prop:app order zero for psi}.
\end{defn}

To relate our notion of transfinite nuclear dimension to Definition \ref{defn:nudim at most omega}, we consider the following stronger form:

\begin{defn}\label{defn:nudim at most omega stronger form}
A $C^*$-algebra $A$ is said to \emph{strongly have nuclear dimension at most $\omega$} if for each $i \in \NN$ there are nets of c.p.c maps $\psi_{\lambda}^{(i)}:A \to F_{\lambda}^{(i)}$ and $\phi_{\lambda}^{(i)}:F_{\lambda}^{(i)}\to A$ with $F^{(i)}_{\lambda}$ finite dimensional $C^*$-algebras and $\phi_{\lambda}^{(i)}$ order zero, such that
\begin{enumerate}
  \item the induced maps $\psi^{(i)}: A\to \prod_{\lambda}F_{\lambda}^{(i)}/\bigoplus F_{\lambda}^{(i)}$ and $\phi^{(i)}:\prod_{\lambda}F_{\lambda}^{(i)}/\bigoplus F_{\lambda}^{(i)}\to A_\Lambda $ are c.p.c. and order 0;
  \item $\iota(a) = \sum_{i=0}^{\infty}\phi^{(i)}\psi^{(i)}(a)$ for all $a\in A$.
\end{enumerate}
Here the notation $A_\Lambda$ and $\iota$ are the same as in Proposition \ref{prop:app order zero for psi}.
\end{defn}

\begin{rem}
In Definition \ref{defn:nudim at most omega stronger form}, the map $\phi^{(i)}$ automatically has order zero since each $\phi_{\lambda}^{(i)}$ has order zero, which follows from Lemma \ref{lem:lifting lemma}. Here we keep the above form to compare with Definition \ref{defn:nudim at most omega}.

Note that the only difference between the above two definitions is that we require each $\phi_{\lambda}^{(i)}$ has order zero in Definition \ref{defn:nudim at most omega stronger form}, which might not be the case in Definition \ref{defn:nudim at most omega}. Hence it is obvious that strongly having nuclear dimension at most $\omega$ implies having nuclear dimension at most $\omega$. However, it is unclear to the authors whether the converse holds or not.
\end{rem}

Now we aim to show Theorem \ref{introthm:omega nudim and trnudim}, which indicates that our notion of transfinite nuclear dimension has a close relation with Definition \ref{defn:nudim at most omega stronger form}:

\begin{thm}[Theorem \ref{introthm:omega nudim and trnudim}]
For a $C^*$-algebra $A$, we have $\trnudim~A < \infty$ if and only if $A$ strongly has nuclear dimension at most $\omega$.
\end{thm}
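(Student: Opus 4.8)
The plan is to treat the two implications separately, starting with the cleaner reverse one: that strongly having nuclear dimension at most $\omega$ forces $\trnudim A < \infty$. Assume $A$ strongly has nuclear dimension at most $\omega$, with witnessing nets $(\psi_\lambda^{(i)})$ and $(\phi_\lambda^{(i)})$. It is enough to show $\Ord N(\F,q) < \infty$ for every finite $\F \subseteq A$ and every $q \in \NN$: the finite subsets of $A$ together with the numbers $q$ form a set, so $\alpha := \sup_{\F,q}\Ord N(\F,q)$ is then an ordinal and $\trnudim A \leq \alpha$. Suppose towards a contradiction that $\Ord N(\F_0,q_0) = \infty$. Since $N(\F_0,q_0)$ is inclusive by Lemma \ref{lem:elementary property for N(F,q)}, Lemma \ref{lem:ordinal infinite lemma} provides an increasing sequence $t_0 < t_1 < \cdots$ with $t_0 \geq q_0$ and $\{t_0,\dots,t_n\} \in N(\F_0,q_0)$ for every $n$. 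Each induced $\psi^{(i)}$ is c.p.c.\ order zero, so Lemma \ref{lem:easy fact for order 0 map} gives $\|\psi^{(i)}(a)\psi^{(i)}(b)\| \leq \|ab\|$; using $\iota(a) = \sum_{i=0}^\infty \phi^{(i)}\psi^{(i)}(a)$ I would fix $N$ with $\|\iota(a) - \sum_{i=0}^N \phi^{(i)}\psi^{(i)}(a)\| < 1/(2q_0)$ for all $a \in \F_0$. Reading the norm of $A_\Lambda$ as a $\limsup$ along $\lambda$ and combining the finitely many order zero estimates with the approximation estimate, I can select one index $\lambda$ for which the triple $F = \bigoplus_{i=0}^N F_\lambda^{(i)}$, $\psi = \bigoplus_{i=0}^N \psi_\lambda^{(i)}$, $\phi = \bigoplus_{i=0}^N \phi_\lambda^{(i)}$ satisfies (a)--(c) of Definition \ref{defn:set for trnudim} together with $\|\psi_\lambda^{(i)}(a)\psi_\lambda^{(i)}(b)\| < 1/t_i + \|ab\|$ and $\|\phi\psi(a)-a\| < 1/q_0$ for $a,b\in\F_0$; here $\psi$ is contractive because the norm of a $C^*$-direct sum is the maximum. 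Taking $a'=a$ and $b'=b$, this triple witnesses $\{t_0,\dots,t_N\} \notin N(\F_0,q_0)$, contradicting the choice of the sequence.

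For the forward implication, assume $\trnudim A \leq \alpha$ and build the nets over $\Lambda = \{(\F,q)\}$, directed by $\F\subseteq\F'$ and $q\leq q'$. Fixing $\lambda=(\F,q)$, the bound $\Ord N(\F,q)\leq\alpha<\infty$ rules out, via Lemma \ref{lem:ordinal infinite lemma}, any infinite increasing sequence all of whose initial segments lie in $N(\F,q)$; hence some canonical $\sigma_\lambda=\{t_0<\cdots<t_{n_\lambda}\}$ with $t_0\geq q$ is \emph{not} in $N(\F,q)$, which by definition supplies a triple $(F_\lambda,\psi_\lambda,\phi_\lambda)$ of $n_\lambda+1$ order zero pieces obeying (1)--(2) of Definition \ref{defn:set for trnudim}. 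Appending a zero summand shows $\sigma_\lambda\cup\{t\}\notin N(\F,q)$ for any $t>t_{n_\lambda}$, so I may enlarge $n_\lambda$ freely and in particular demand $n_\lambda\geq q$. Declaring $\psi_\lambda^{(i)},\phi_\lambda^{(i)}$ to be the $i$-th pieces, and zero when $i>n_\lambda$, each induced $\phi^{(i)}$ is c.p.c.\ order zero by Lemma \ref{lem:lifting lemma} and each induced $\psi^{(i)}$ is c.p.c. The order zero property of $\psi^{(i)}$ is checked coordinatewise: for $a,b\in A_+$ with $ab=0$ and $\lambda$ large, condition (1) gives $a',b'$ within $1/q$ of $a,b$ with $\|\psi_\lambda^{(i)}(a')\psi_\lambda^{(i)}(b')\| < 1/t_i + \|a'b'\| \leq 1/q + \|a'b'\|$, and since $\|a'b'\| = \|a'b'-ab\| \to 0$ while each $\psi_\lambda^{(i)}$ is contractive, one gets $\|\psi_\lambda^{(i)}(a)\psi_\lambda^{(i)}(b)\| \to 0$.

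The hard part is the remaining condition $\iota(a) = \sum_{i=0}^\infty \phi^{(i)}\psi^{(i)}(a)$. In $A_\Lambda$ this reads $\lim_N \limsup_\lambda \|a - \sum_{i=0}^N \phi_\lambda^{(i)}\psi_\lambda^{(i)}(a)\| = 0$. Condition (2) controls only the \emph{full} sum, $\|a - \sum_{i=0}^{n_\lambda}\phi_\lambda^{(i)}\psi_\lambda^{(i)}(a)\| < 1/q$, whereas for a fixed truncation level $N$ the cofinal indices have $n_\lambda > N$ --- unavoidable precisely in the genuinely transfinite regime, where $n_\lambda \to \infty$ --- so the truncated sum drops the honestly nonzero tail $\sum_{i=N+1}^{n_\lambda}\phi_\lambda^{(i)}\psi_\lambda^{(i)}(a)$ and need not be close to $a$. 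Hence the naive assignment does not make the series converge, and the core of the argument must be to reorganise the extracted finite decompositions into a telescoping, partition-of-unity-type family, in the spirit of the functions $f_0,\dots,f_n$ of Lemma \ref{lem:cover for C(q)} for which $\sum_i f_i^2$ telescopes to $1$, so that the high-index pieces contribute negligibly to $\iota(a)$ uniformly over the cofinal part of $\Lambda$. I expect this summability/telescoping step, rather than the order zero bookkeeping, to be the main obstacle.
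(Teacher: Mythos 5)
Your backward implication (strongly having nuclear dimension at most $\omega$ implies $\trnudim~A<\infty$) is correct and is, step for step, the paper's own argument: reduce to a single pair $(\F_0,q_0)$ via Lemma \ref{lem:char for fintieness of trnudim}, extract an increasing sequence all of whose initial segments lie in $N(\F_0,q_0)$ using inclusivity and Lemma \ref{lem:ordinal infinite lemma}, truncate the norm-convergent series at some $N_0$, and pick $\lambda$ large enough that the finitely many order-zero estimates (via Lemma \ref{lem:easy fact for order 0 map}) and the approximation estimate hold simultaneously, contradicting $\{t_0,\dots,t_{N_0}\}\in N(\F_0,q_0)$. Your forward implication also sets up exactly the paper's construction: the directed set $\Lambda$ of pairs $(\F,q)$, a choice of $\sigma_\lambda\notin N(\F,q)$ with a witnessing triple, zero maps on unused slots, order zero of the induced $\phi^{(i)}$ from Lemma \ref{lem:lifting lemma}, and order zero of the induced $\psi^{(i)}$ from condition (1) plus contractivity. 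The difference is what happens next: the paper finishes in one sentence (``these maps can be combined to provide the required maps in Definition \ref{defn:nudim at most omega stronger form}''), whereas you stop and declare condition (2) of that definition, the norm convergence $\iota(a)=\sum_{i=0}^{\infty}\phi^{(i)}\psi^{(i)}(a)$ in $A_\Lambda$, an unresolved obstacle. Since that identity is part of what must be proved, your attempt is an incomplete proof of the forward direction, not a complete proof by another route.

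That said, the step you refuse to take is not a triviality you overlooked: with an arbitrary (but permitted) choice of $\sigma_\lambda$ and of witnessing triples, the combination genuinely fails, so the paper's one-line conclusion needs a supplementary selection argument that is nowhere supplied. For a concrete illustration take $A=\M_2(\CC)$. For every finite $\F$, every admissible $q$ and every canonical $\sigma=\{t_0<\cdots<t_n\}$ with $t_0\geq q$, the triple with $F^{(i)}=\M_2(\CC)$, $\psi^{(i)}=\mathrm{id}$ and $\phi^{(i)}=\frac{1}{n+1}\,\mathrm{id}$ satisfies (a)--(c) and (1)--(2) of Definition \ref{defn:set for trnudim}, so every such $\sigma$ lies outside $N(\F,q)$ and may legitimately be selected as $\sigma_\lambda$ with, say, $n_\lambda\geq q$. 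With these choices each coordinate family $\bigl(\phi^{(i)}_\lambda\psi^{(i)}_\lambda(a)\bigr)_\lambda=\bigl(a/(n_\lambda+1)\bigr)_\lambda$ is norm-null along $\Lambda$, hence every $\phi^{(i)}\psi^{(i)}$ vanishes on $A_\Lambda$ and the series sums to $0$ rather than $\iota(a)$ --- even though $\M_2(\CC)$ trivially strongly has nuclear dimension at most $\omega$. So completing the forward direction requires showing that the witnesses can be chosen so that, for each $a$ and $\varepsilon$, some fixed truncation level captures $a$ up to $\varepsilon$ uniformly on a tail of $\Lambda$; neither your proposal nor the paper's own proof establishes that such a ``concentrated'' choice is always possible, and your suggested telescoping reorganisation (the mechanism that does make inductive limits work) is a sensible direction but is not carried out. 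In short: you correctly reproduced everything the paper actually proves, and the gap in your attempt sits exactly on top of the assertion the paper does not prove.
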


To prove Theorem \ref{introthm:omega nudim and trnudim}, we need the following auxiliary result:

\begin{lem}\label{lem:char for fintieness of trnudim}
For a $C^*$-algebra $A$, we have $\trnudim~A < \infty$ \emph{if and only if} for any $q \in \NN$ and finite $\F \subseteq A$, $\Ord~N(\F,q) < \infty$.
\end{lem}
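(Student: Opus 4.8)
The plan is to observe that the forward implication is immediate from the definitions, so that all the content lies in the backward direction, where the point is to extract a \emph{single} ordinal bound valid for every pair $(\F,q)$ simultaneously out of the pointwise finiteness hypothesis.

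For the forward direction I would argue as follows. Suppose $\trnudim~A < \infty$. By Definition \ref{defn:trnudim} this means $\trnudim~A \leq \alpha$ for some ordinal $\alpha$, which unwinds to the statement that $\Ord~N(\F,q) \leq \alpha$ for every $q \in \NN$ and every finite $\F \subseteq A$. In particular $\Ord~N(\F,q) \leq \alpha < \infty$ for each such pair, which is precisely the right-hand condition.

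For the backward direction, the key structural observation is that each $N(\F,q)$ is by construction a subset of $\Fin~\NN$, and $\NN$ is countable. This puts us exactly in the hypothesis of Proposition \ref{prop:ordinal finite imply countable}: the assumption $\Ord~N(\F,q) < \infty$ therefore upgrades automatically to $\Ord~N(\F,q) < \omega_1$ for every $q$ and every finite $\F$. The crucial feature is that the ceiling $\omega_1$ does \emph{not} depend on the pair $(\F,q)$. Consequently $\Ord~N(\F,q) \leq \omega_1$ holds uniformly, and Definition \ref{defn:trnudim} then gives $\trnudim~A \leq \omega_1$, so in particular $\trnudim~A < \infty$.

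The step I expect to be the main (indeed the only) obstacle is obtaining this uniformity. A priori, pointwise finiteness of $\Ord~N(\F,q)$ ranging over the possibly uncountable family of finite subsets $\F \subseteq A$ does not obviously produce a common ordinal ceiling, and this is exactly what Proposition \ref{prop:ordinal finite imply countable} circumvents by pinning the bound to $\omega_1$ using only the countability of the index set $\NN$. I would also remark that one could instead argue purely set-theoretically, noting that $\{\Ord~N(\F,q) : q \in \NN,\ \F \subseteq A \text{ finite}\}$ is a set of ordinals, hence has a supremum $\alpha$ which is itself an ordinal bounding all of them; the route through Proposition \ref{prop:ordinal finite imply countable} is preferable, however, since it additionally yields the explicit and clean conclusion $\trnudim~A \leq \omega_1$ whenever $\trnudim~A < \infty$.
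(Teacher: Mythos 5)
Your proof is correct, but your main route differs from the paper's. The paper's own proof of the backward direction is exactly the argument you relegate to a closing remark: since the pairs $(\F,q)$ form a set, the ordinals $\alpha_{(\F,q)} := \Ord~N(\F,q)$ form a set of ordinals, and the paper simply takes $\alpha := \sup\{\alpha_{(\F,q)}\}$ and concludes $\trnudim~A \leq \alpha < \infty$. So the ``uniformity obstacle'' you emphasize is, in the paper's view, dispatched by one application of the supremum; your statement that pointwise finiteness ``does not obviously produce a common ordinal ceiling'' overstates the difficulty, since replacement immediately gives such a ceiling. Your preferred route through Proposition \ref{prop:ordinal finite imply countable} is nevertheless valid: each $N(\F,q) \subseteq \Fin~\NN$ with $\NN$ countable, so $\Ord~N(\F,q) < \infty$ upgrades to $\Ord~N(\F,q) < \omega_1$, a bound independent of $(\F,q)$, whence $\trnudim~A \leq \omega_1$. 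What your approach buys is a sharper, explicit conclusion --- $\trnudim~A < \infty$ forces $\trnudim~A \leq \omega_1$ --- which the bare supremum argument does not provide; what the paper's approach buys is economy, as it needs nothing beyond the fact that ordinals indexed by a set are bounded, and in particular does not invoke the countability of the index set $\NN$ at all.
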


\begin{proof}
The necessity is obvious, and we focus on the sufficiency. By assumption, for any $q \in \NN$ and finite $\F \subseteq A$, denote $\alpha_{(\F,q)}:=\Ord~N(\F,q)$. Take
\[
\alpha:=\sup\{\alpha_{(\F,q)}: \F \text{ is a finite subset of } A \text{ and } q\in \NN\},
\]
which is also an ordinal number. Then we conclude $\trnudim~A \leq \alpha$.
\end{proof}

\begin{proof}[Proof of Theorem \ref{introthm:omega nudim and trnudim}]
\emph{Necessity}: Assume that $\trnudim~A  \leq \alpha$ for some ordinal number $\alpha$. By definition, for any $q\in \NN$ and finite $\F \subseteq A$, we have $\Ord~N(\F,q) \leq \alpha$. Since $N(\F,q)$ is inclusive, $N(\F,q) \neq \Fin (\NN\cap [q,\infty))$ by Lemma \ref{lem:ordinal infinite lemma} and hence we can choose $\sigma_{(\F,q)} \in \Fin (\NN\cap [q,\infty)) \setminus N(\F,q)$. Consider the index set
\[
\Lambda:=\left\{(\F,q): \F \text{ is a finite subset of } A \text{ and } q\in \NN \text{ such that } q\geq \max\{\|a\|^2:a\in \F\}\right\},
\]
where the order is given by $(\F_1, q_1) \leq (\F_2, q_2)$ if and only if $\F_1 \subseteq \F_2$ and $q_1 \leq q_2$. Hence for each $\lambda=(\F,q) \in \Lambda$, there is $\sigma_\lambda=\{t_0, t_1, \cdots, t_{n_\lambda}\} \in \Fin~\NN$ and $(F_\lambda, \psi_\lambda, \phi_\lambda)$ where
\begin{enumerate}
 \item[(a)] $F_\lambda = F^{(0)}_\lambda \oplus \cdots \oplus F^{(n_\lambda)}_\lambda$ is a finite-dimensional $C^*$-algebra;\\[-0.3cm]
 \item[(b)] $\psi_\lambda = \psi^{(0)}_\lambda \oplus \cdots \oplus \psi^{(n_\lambda)}_\lambda: A \to F_\lambda$ is a c.p.c. map with $\psi^{(i)}_\lambda:A \to F^{(i)}_\lambda$ for each $i$;\\[-0.3cm]
 \item[(c)] $\phi_\lambda: F_\lambda \to A$ is c.p. and the restriction $\phi^{(i)}_\lambda:=\phi_\lambda\big|_{F^{(i)}_\lambda}: F^{(i)}_\lambda \to A$ is contractive and order zero for each $i$,
\end{enumerate}
satisfying the following:
\begin{enumerate}
 \item for any $i=0,1,\cdots,n_\lambda$ and $a,b \in \F$ with $a\perp b$, take $a'\in A$ and $b' \in A$ with $\|a-a'\| < 1/q$ and $\|b-b'\| < 1/q$ such that $\|\psi^{(i)}_\lambda(a')\psi^{(i)}_\lambda(b')\| < 1/t_i + \|a'b'\|$. Hence for $M:=\max\{\|a\|: a\in \F\}$, we have
 \begin{eqnarray*}
\|\psi^{(i)}_\lambda(a)\psi^{(i)}_\lambda(b)\|
 &\leq & \|\psi^{(i)}_\lambda(a')\psi^{(i)}_\lambda(b')\| + \|\psi^{(i)}_\lambda(a - a')\psi^{(i)}_\lambda(b')\| + \|\psi^{(i)}_\lambda(a)\psi^{(i)}_\lambda(b-b')\| \\
 & < & \frac{1}{t_i} + \|a'b'\| + \frac{1}{q} \cdot (M+\frac{1}{q}) + M \cdot \frac{1}{q}  \\
 & < & \frac{1}{q} + \frac{2}{q} \cdot (M+\frac{1}{q}) + M \cdot \frac{2}{q} \\
 & \leq & \frac{1}{q} + \frac{2}{q^2} + \frac{4}{q^{1/2}} \to 0 \quad \text{as} \quad q \to \infty.
 \end{eqnarray*}
 \item $\|\phi_\lambda\circ \psi_\lambda(a) - a\|<1/q$ for any $a\in \F$.
\end{enumerate}
These maps can be combined to provide the required maps in Definition \ref{defn:nudim at most omega stronger form} (here $\phi^{(i)}$ having order zero follows from Lemma \ref{lem:lifting lemma}) to ensure that $A$ strongly has nuclear dimension at most $\omega$.

\emph{Sufficiency}: Assume the contrary. Then by Lemma \ref{lem:char for fintieness of trnudim}, there exists $q_0 \in \NN$ and finite $\F_0 \subseteq A$ such that $\Ord~N(\F_0,q_0) = \infty$. Since $N(\F_0,q_0)$ is inclusive by Lemma \ref{lem:elementary property for N(F,q)}, then Lemma \ref{lem:ordinal infinite lemma} implies that there exists a sequence $\{t_k\}_{k\in \NN} \subseteq \NN \cap [q_0,\infty)$ such that $\{t_k\}_{k=0}^n \in N(\F_0,q_0)$ for each $n\in \NN$.

Since $A$ strongly has nuclear dimension at most $\omega$, there exist nets of c.p.c maps $\psi_{\lambda}^{(i)}:A \to F_{\lambda}^{(i)}$ and $\phi_{\lambda}^{(i)}:F_{\lambda}^{(i)}\to A$ satisfying the conditions in Definition \ref{defn:nudim at most omega stronger form}. Hence there exists $N_0\in \NN$ such that
\[
\big\| \iota(a) - \sum_{i=0}^{N_0} \phi^{(i)} \psi^{(i)}(a) \big\| < \frac{1}{q_0} \quad \text{for} \quad a\in \F_0,
\]
which implies that there exists $\lambda_0$ such that for any $\lambda \geq \lambda_0$, we have
\[
\big\| a - \sum_{i=0}^{N_0} \phi^{(i)}_{\lambda} \psi^{(i)}_{\lambda}(a) \big\| < \frac{1}{q_0} \quad \text{for} \quad a\in \F_0.
\]
On the other hand, note that $\psi^{(i)}: A\to \prod_{\lambda}F_{\lambda}^{(i)}/\bigoplus F_{\lambda}^{(i)}$ has order zero for each $i$. Hence using the same argument as in the proof for the sufficiency of Theorem \ref{thm:trnudim and nudim}, there exists $\lambda_1$ such that for any $\lambda \geq \lambda_1$ and $i=0,1,\cdots,N_0$, we have
\[
\left\| \psi^{(i)}_{\lambda}(a) \psi^{(i)}_{\lambda}(b) \right\| < \frac{1}{t_i} + \|ab\| \quad \text{for} \quad a,b \in \F_0.
\]
This implies that $\{t_0, t_1, \cdots,t_{N_0}\} \notin N(\F_0, q_0)$, which is a contradiction. Hence we conclude the proof.
\end{proof}

%Combining Theorem \ref{introthm:omega nudim and trnudim} with \cite[Theorem 3.4 and Corollary 3.5]{Rob11}, we obtain:
%
%\begin{cor}\label{cor:cor to thm omega}
%Let $A$ be a $C^*$-algebra $\trnudim~A < \infty$, then $A$ is nuclear and the Cuntz semigroup $\mathrm{Cu}(A)$ has the $\omega$-comparison property (see \cite[Definition 2.1]{OPR12}). Moreover if $A$ is $\sigma$-unital, then $A$ has the corona
%factorisation property.
%\end{cor}

Finally in this section, we study a permanence property to provide an example.

\begin{lem}\label{lem:direct union}
Let $\{A_j\}_{j \in J}$ be an increasing net of $C^*$-algebras (\emph{i.e.}, $A_j \subseteq A_k$ for $j \leq k$), and $A=\overline{\bigcup_j A_j}$ be their direct union. If there exists an ordinal number $\alpha$ such that $\trnudim~A_j \leq \alpha$ for each $j\in J$, then we have $\trnudim~A \leq \alpha$.
\end{lem}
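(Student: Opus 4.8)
The plan is to bound $\Ord N_A(\F,q)$ for each finite $\F\subseteq A$ and each $q\in\NN$ by comparing $N_A(\F,q)$ with the corresponding set of one of the $A_j$ and invoking Lemma~\ref{lem:ordinal inclusion lemma}. Concretely, for each such pair $(\F,q)$ I would produce an index $j\in J$, a finite set $\mathcal{G}\subseteq A_j$ and $q'\in\NN$ with
\[
N_A(\F,q)\subseteq N_{A_j}(\mathcal{G},q').
\]
Since $\trnudim A_j\le\alpha$ gives $\Ord N_{A_j}(\mathcal{G},q')\le\alpha$, Lemma~\ref{lem:ordinal inclusion lemma} then yields $\Ord N_A(\F,q)\le\alpha$, and as $(\F,q)$ is arbitrary we conclude $\trnudim A\le\alpha$. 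The inclusion is established in contrapositive form: from a triple $(F,\psi,\phi)$ witnessing $\sigma\notin N_{A_j}(\mathcal{G},q')$ I must manufacture a triple witnessing $\sigma\notin N_A(\F,q)$.

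The core of the construction is to transport the triple from $A_j$ to $A$. One keeps $\phi\colon F\to A_j\subseteq A$ unchanged and extends $\psi\colon A_j\to F$ to a c.p.c.\ map $\tilde\psi\colon A\to F$, e.g.\ by Arveson's extension theorem into $B(H)\supseteq F$ followed by the canonical conditional expectation $B(H)\to F$ onto the finite-dimensional block diagonal, so that $\tilde\psi|_{A_j}=\psi$. Conditions (a)--(c) of Definition~\ref{defn:set for trnudim} are then immediate, and the near-orthogonality condition~(1) is harmless: for $a,b\in\F$ one feeds the perturbants $\hat a,\hat b\in A_j$ produced by condition~(1) of the $A_j$-triple (applied to the corresponding approximants in $\mathcal{G}$) directly as the required $a',b'\in A$, using that $\tilde\psi$ agrees with $\psi$ on $A_j$ and choosing $\mathcal{G}$ to approximate $\F$ within $\tfrac1{3q}$ and $q'\ge 3q$ so that the perturbation budgets add up; crucially this step never passes a perturbation through $\phi$, so it is uniform in $\sigma$.

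The step I expect to be the genuine obstacle is the reconstruction condition~(2). The clean case is $\F\subseteq A_j$ (equivalently $\F\subseteq\bigcup_k A_k$): taking $\mathcal{G}=\F$ gives $\tilde\psi(a)=\psi(a)$ exactly on $\F$, whence $\|\phi\circ\tilde\psi(a)-a\|=\|\phi\circ\psi(a)-a\|<1/q'\le 1/q$ with no loss, so $N_A(\F,q)\subseteq N_{A_j}(\F,q)$ holds outright, and by directedness every finite subset of $\bigcup_k A_k$ lies in some $A_j$. The difficulty is reducing a general $\F\subseteq A=\overline{\bigcup_k A_k}$ to this case. If one merely approximates $a\in\F$ by $g\in\mathcal{G}$ within $\delta$ and reuses $\tilde\psi$, the reconstruction error acquires a term $\|\phi\circ\tilde\psi(a-g)\|$, and since $\phi$ only restricts to contractive maps $\phi^{(i)}$ on the $n+1$ blocks one has merely $\|\phi\|\le n+1$, so this term is bounded only by $(n+1)\delta$ with $n=|\sigma|-1$ ranging over all of $\NN$. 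Unlike the finite nuclear dimension case, where $n$ is fixed, no single $\delta$ controls this uniformly in $\sigma$. The decisive idea is therefore to arrange \emph{exact} agreement $\tilde\psi(a)=\psi(g)$, so that $a-g$ is never pushed through $\phi$ --- for instance by precomposing $\psi$ with a c.p.c.\ map $R\colon A\to A_j$ satisfying $R(a)=g$ while still fixing the finitely many elements of $A_j$ needed for condition~(1) --- after which the error collapses to $\|\phi\circ\psi(g)-g\|+\|g-a\|<1/q'+\delta<1/q$, free of $n$. Establishing the existence of such an $R$, equivalently the continuity of $\Ord N_A(\cdot,q)$ under norm-approximation of $\F$ by finite subsets of $\bigcup_k A_k$, is where the real work lies and is the step I would expect to require the most care.
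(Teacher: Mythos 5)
Your transport scheme is the same as the paper's --- approximate $\F$ by a finite set inside some $A_j$, extend the $\psi$-side by Arveson's theorem, push the $\phi$-side forward along $A_j \hookrightarrow A$, and compare ordinal numbers --- and your ``clean case'' $\F\subseteq\bigcup_k A_k$ is handled correctly. But for general $\F$ the proposal is not a proof: everything is made to rest on a c.p.c.\ map $R\colon A\to A_j$ taking \emph{exactly} prescribed values while fixing the finitely many witnesses in $A_j$, whose existence you explicitly defer, and which fails in general because complete positivity is rigid. Already for $A_j=\CC 1\subseteq A$: a c.p.c.\ map $R\colon A\to\CC 1$ fixing $1$ is given by a state $\varphi$, and if $a=g\cdot 1+h$ with $h\geq\varepsilon 1$, $\varepsilon>0$, then $\varphi(a)\geq g+\varepsilon$, so $R(a)=g\cdot 1$ is impossible no matter how small $\|h\|$ is. There is also a smaller slip: the inclusion $N_A(\F,q)\subseteq N_{A_j}(\mathcal{G},q')$ with $q'\geq 3q$ cannot hold for trivial reasons, since membership in $N_{A_j}(\mathcal{G},q')$ forces $t_0\geq q'$ while $N_A(\F,q)$ may contain $\sigma$ with $t_0\in[q,q')$; this is why the paper compares the two sets through the injective map $t\mapsto 2t$ and Lemma \ref{lem:ordinal bijection lemma} rather than through Lemma \ref{lem:ordinal inclusion lemma}.

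That said, the obstacle you isolate is precisely the step the paper treats as immediate. After choosing $\F'\subseteq A_j$ with each element of $\F$ being $\tfrac{1}{4q}$-close to $\F'$, the paper disposes of the reconstruction condition by the estimate
\[
\|\phi\circ \psi(a) - a\| < \|\phi\circ \psi(a') - a'\| + 2 \cdot \tfrac{1}{4q},
\]
which tacitly uses $\|\phi\circ\psi(a-a')\|\leq\|a-a'\|$; but Definition \ref{defn:set for trnudim} only makes each block $\phi^{(i)}$ contractive, so a priori $\|\phi\|$ can be of size $n+1$ with $n=|\sigma|-1$ unbounded over $\sigma\in N_A(\F,q)$ --- exactly the $(n+1)\delta$ phenomenon you describe, and the paper offers no justification for suppressing it. So your diagnosis of where the real difficulty sits is accurate, indeed more careful than the paper's own treatment; but a correct diagnosis combined with an unavailable tool is still a gap, and your proposal as written does not prove the lemma. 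Closing it requires a different ingredient from the interpolation map $R$: for instance, restricting to witnessing triples normalised so that $\|\phi\circ\psi\|\leq 1$, or working with a variant of condition (2) in Definition \ref{defn:set for trnudim} that, like condition (1), tolerates $1/q$-perturbations of the elements of $\F$ --- under either of these, both your argument and the paper's transport argument go through verbatim.
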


\begin{proof}
Given $q \in \NN$ and finite $\F \subseteq A$, choose $j\in J$ and finite $\F' \subseteq A_j$ such that each element in $\F$ is $\frac{1}{4q}$-close to some element in $\F'$. We claim that the map $\NN \to \NN, n \mapsto 2n$ maps $N_A(\F,q)$ into $N_{A_j}(\F', 2q)$, which will conclude the proof thanks to Lemma \ref{lem:ordinal bijection lemma}.

Assume not, then there exists a canonical $\sigma=\{t_0, t_1, \cdots,t_n\} \in N_A(\F,q)$ while $2\sigma:=\{2t_0, 2t_1, \cdots, 2t_n\} \notin N_{A_j}(\F', 2q)$. By definition, there exists $(F, \psi_j, \phi_j)$ where
\begin{enumerate}
 \item[(a)] $F=F^{(0)} \oplus \cdots \oplus F^{(n)}$ is a finite-dimensional $C^*$-algebra;
 \item[(b)] $\psi_j=\psi^{(0)}_j \oplus \cdots \oplus \psi^{(n)}_j: A_j \to F$ is a c.p.c. map with $\psi^{(i)}_j:A_j \to F^{(i)}$ for each $i$;
 \item[(c)] $\phi_j: F \to A_j$ is c.p. and the restriction $\phi^{(i)}_j:=\phi_j\big|_{F^{(i)}}: F^{(i)} \to A_j$ is contractive and order zero for each $i$,
\end{enumerate}
satisfying the following:
\begin{enumerate}
 \item for any $i=0,1,\cdots,n$ and $a',b' \in \F'$, there exist $a'', b'' \in A_j$ with $\|a'-a''\| < 1/2q$ and $\|b'-b''\|<1/2q$ such that $\|\psi^{(i)}_j(a'')\psi^{(i)}_j(b'')\| < 1/2t_i + \|a''b''\|$;
 \item $\|\phi_j\circ \psi_j(a') - a'\|<1/2q$ for all $a'\in \F'$.
\end{enumerate}

By Arveson's extension theorem, we can extend $\psi^{(i)}_j$ to a c.p.c. map $\psi^{(i)}:A \to F^{(i)}$ for each $i$. Then $\psi:=\psi^{(0)} \oplus \cdots \oplus \psi^{(n)}: A \to F$ is also a c.p.c. map. Denote $\phi: F \to A$ the composition of $\phi_j$ with the inclusion map $A_j \hookrightarrow A$, which is c.p., and the restriction $\phi^{(i)}:=\phi\big|_{F^{(i)}}: F^{(i)} \to A$ is contractive and order zero for each $i$.

For any $i=0,1,\cdots,n$ and $a,b \in \F$, choose $a', b' \in \F'$ with $\|a-a'\| < 1/4q$ and $\|b-b'\|<1/4q$. Then there exist $a'', b'' \in A_j$ with $\|a'-a''\| < 1/2q$ and $\|b'-b''\|<1/2q$ such that $\|\psi^{(i)}_j(a'')\psi^{(i)}_j(b'')\| < 1/t_i + \|a''b''\|$. Hence $\|a-a''\| < 1/q$, $\|b-b''\|<1/q$ and $\|\psi^{(i)}(a'')\psi^{(i)}(b'')\| < 1/2t_i + \|a''b''\|$ since $\psi^{(i)}$ extends $\psi^{(i)}_j$. Moreover, for any $a\in \F$ choose $a' \in \F'$ with $\|a-a'\| < 1/4q$ and hence:
\[
\|\phi\circ \psi(a) - a\| < \|\phi\circ \psi(a') - a'\| + 2 \cdot \frac{1}{4q} = \|\phi_j\circ \psi_j(a') - a'\| + \frac{1}{2q} < \frac{1}{2q} + \frac{1}{2q} = \frac{1}{q}.
\]
This shows that $\sigma \notin N_A(\F,q)$, which leads to a contradiction.
\end{proof}

\begin{cor}\label{cor:inductive limit for fintie nudim}
Let $\{A_j\}_{j\in J}$ be an inductive system of $C^*$-algebras where each $A_j$ has finite nuclear dimension, and $A$ be their inductive limit. Then $\trnudim~A \leq \omega$, where $\omega$ is the first infinite ordinal number.
\end{cor}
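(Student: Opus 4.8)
The plan is to realise $A$ as a direct union of subalgebras to which Lemma~\ref{lem:direct union} applies, with the uniform ordinal bound $\alpha = \omega$. Writing the inductive system as $\{A_j, \varphi_{jk}\}_{j \leq k}$ with canonical $\ast$-homomorphisms $\varphi_j \colon A_j \to A$, I would first set $B_j := \varphi_j(A_j) \subseteq A$. Since $\varphi_j = \varphi_k \circ \varphi_{jk}$ whenever $j \leq k$, the family $\{B_j\}_{j \in J}$ is an increasing net of $C^*$-subalgebras of $A$ with $A = \overline{\bigcup_{j} B_j}$; thus $A$ is the direct union of the $B_j$ in the sense of Lemma~\ref{lem:direct union}.

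Next I would bound $\trnudim~B_j$ uniformly. Each $B_j$ is the image of $A_j$ under the $\ast$-homomorphism $\varphi_j$, hence a quotient of $A_j$; since nuclear dimension does not increase under passage to quotients (a standard permanence property; see \cite{WZ10}), we obtain $\nudim~B_j \leq \nudim~A_j < \infty$. Writing $m_j := \nudim~B_j \in \NN$, Theorem~\ref{thm:trnudim and nudim} yields $\trnudim~B_j \leq m_j$. As $m_j$ is a natural number we have $m_j \leq \omega$, and since $\Ord~M \leq \alpha \leq \beta$ forces $\Ord~M \leq \beta$ (immediate from Definition~\ref{defn:ordinal number} by transfinite induction), the bound $\trnudim~B_j \leq m_j$ upgrades to $\trnudim~B_j \leq \omega$ for every $j \in J$. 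Finally, Lemma~\ref{lem:direct union} applied to the increasing net $\{B_j\}_{j \in J}$ with the common bound $\alpha = \omega$ gives $\trnudim~A \leq \omega$, as required.

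The argument invokes no new machinery: beyond Theorem~\ref{thm:trnudim and nudim} and Lemma~\ref{lem:direct union}, the only external input is the permanence of nuclear dimension under quotients, needed precisely to control the images $B_j$ when the connecting maps $\varphi_{jk}$ fail to be injective. The conceptual point — and the reason one cannot hope for a finite bound in general — is that the finite dimensions $m_j$ may be unbounded over $J$; what rescues the statement is that every natural number lies below the single ordinal $\omega$, supplying exactly the uniform ordinal bound that Lemma~\ref{lem:direct union} demands. I expect the main subtlety to be this reduction to images together with the quotient permanence, rather than any serious analytic difficulty.
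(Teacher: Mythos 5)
Your proposal is correct and follows essentially the same route as the paper: pass to the images $\varphi_j(A_j)\subseteq A$, invoke the permanence of nuclear dimension under quotients (the paper cites \cite[Proposition 2.3(iv)]{WZ10} for exactly this), convert the finite bounds into transfinite ones via Theorem~\ref{thm:trnudim and nudim}, and conclude with Lemma~\ref{lem:direct union} using the uniform bound $\alpha=\omega$. Your explicit remark that $\trnudim~B_j\leq m_j$ upgrades to $\trnudim~B_j\leq\omega$ by monotonicity of $\Ord$ is a small point the paper leaves implicit, but it is the same argument.
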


\begin{proof}
Assume that $\theta_j: A_j \to A$ is the associated morphism. Then $\{\theta_j(A_j)\}_{j\in J}$ is an increasing net of $C^*$-subalgebras in $A$ and $A$ is their direct union. By \cite[Proposition 2.3(iv)]{WZ10}, each $\theta_j(A_j)$ has finite nuclear dimension and hence by Theorem \ref{introthm:trnudim and nudim}, $\trnudim~\theta_j(A_j) < \omega$. Therefore, the result follows from Lemma \ref{lem:direct union}.
\end{proof}

The following example follows directly from Corollary \ref{cor:inductive limit for fintie nudim}:

\begin{ex}
Let $X$ be a compact Hausdorff space with finite covering dimension. Then any Villadsen algebra of the first type (see \cite[Definition 3.2]{TW09})
\[
A = \lim_{i\to \infty} (\M_{m_i}(\CC) \otimes C(X^{n_i}), \phi_i)
\]
has transfinite nuclear dimension at most $\omega$. Similarly, any Villadsen algebra of the second type (see \cite{Vil98}) has transfinite nuclear dimension at most $\omega$.
\end{ex}

\section{Transfinite asymptotic dimension and uniform Roe algebra}\label{sec:unif. Roe}

In this section, we would like to study the transfinite nuclear dimension for uniform Roe algebras. Our main task here is to prove Theorem \ref{introthm:trnudim and trasdim}.

%It turns out that they have close relation with the transfinite asymptotic dimension introduced in \cite{Rad10}.

Given a discrete metric space $(X,d)$ of bounded geometry, each operator $T \in \B(\ell^2(X))$ can be written in the matrix form $T=(T_{x,y})_{x,y\in X}$, where $T_{x,y}=\langle T \delta_y, \delta_x \rangle \in \CC$. Given an operator $T \in \B(\ell^2(X))$, we define the \emph{propagation} of $T$ to be
\[
\ppg(T):= \sup\{d(x,y): (x,y) \in X \times X \text{ such that } T_{x,y} \neq 0\}.
\]
We say that $T$ has \emph{finite propagation} if $\ppg(T) < \infty$.

The set of all finite propagation operators in $\B(\ell^2(X))$ forms a $\ast$-algebra, called the \emph{algebraic uniform Roe algebra of $(X, d)$} and denoted by $\CC_u[X]$. The \emph{uniform Roe algebra of $(X, d)$} is defined to be the operator norm closure of $\CC_u[X]$ in $\B(\ell^2(X))$, which forms a $C^*$-algebra and is denoted by $C^*_u(X)$.

In \cite{WZ10}, Winter and Zacharias showed that the nuclear dimension of the uniform Roe algebra $C^*_u(X)$ is bounded by the asymptotic dimension for $(X,d)$ introduced by Gromov in \cite{Gro93}. Recall that a metric space $(X,d)$ is said to have \emph{asymptotic dimension at most $n$} for some $n\in \NN$, denoted by $\asdim~X \leq n$, if for every $r>0$ there exist uniformly bounded families $\U_0, \U_1, \cdots, \U_n$ of subsets of $X$ such that $\bigcup_{i=0}^n \U_i$ covers $X$ and each $\U_i$ is $r$-disjoint for $i=0,1,\cdots,n$.

\begin{prop}[{\cite[Theorem 8.5]{WZ10}}]\label{prop:WZ09 uniform Roe}
Let $(X,d)$ be a discrete metric space of bounded geometry, then $\nudim~C^*_u(X) \leq \asdim~X$.
\end{prop}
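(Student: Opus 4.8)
The goal is to prove Proposition \ref{prop:WZ09 uniform Roe}, that $\nudim~C^*_u(X) \leq \asdim~X$ for a discrete metric space $(X,d)$ of bounded geometry. The plan is to set $n := \asdim~X$ (assuming it is finite, otherwise there is nothing to prove) and to directly construct, for each finite subset $\F \subseteq C^*_u(X)$ and each $\varepsilon > 0$, a finite-dimensional $C^*$-algebra $F = F^{(0)} \oplus \cdots \oplus F^{(n)}$ together with c.p. maps $\psi: C^*_u(X) \to F$ and $\phi: F \to C^*_u(X)$ verifying conditions (i)--(iii) of Definition \ref{defn:nuclear dimension}. Since finite-propagation operators are dense in $C^*_u(X)$, it suffices to handle a finite set $\F$ consisting of operators of propagation at most some fixed $R > 0$, up to a small perturbation.

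First I would fix the coarse-geometric input. Applying $\asdim~X \leq n$ at scale $r$ (with $r$ chosen large compared to $R$, say $r$ a large multiple of $R$ of the form dictated by Lemma \ref{lem:cover for C(q)}), I obtain uniformly bounded, $r$-disjoint families $\U_0, \ldots, \U_n$ whose union covers $X$. For each $i$ I would define $F^{(i)}$ as a direct sum over the members $U \in \U_i$ of matrix algebras $M_{|U|}(\CC)$; bounded geometry and uniform boundedness guarantee each $|U|$ is bounded, so each $F^{(i)}$ is finite-dimensional. The next step is the crucial construction of a partition-of-unity-type family of functions subordinate to the cover: here I would invoke Lemma \ref{lem:cover for C(q)} to produce non-negative $1/2$-Hölder functions $f_0, \ldots, f_n$ with $\supp(f_i) \subseteq \Nd_{t_i/2}(\bigcup \U_i)$ and $\sum_i f_i^2 \equiv 1$ on $\bigcup_i (\bigcup \U_i) = X$, with the small-Hölder-constant control $\sum_i C_i < 1/q$. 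These functions let me cut the operators along the cover while keeping the Hölder constants tiny so that commutators with the $f_i$ are negligible.

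Then I would define the maps. For $\psi^{(i)}$, the component into the block indexed by $U \in \U_i$, I would set $\psi^{(i)}(T)$ to be the compression $f_i T f_i$ restricted to $\ell^2(U)$, i.e. the $|U| \times |U|$ matrix $(f_i(x) T_{x,y} f_i(y))_{x,y \in U}$; this is completely positive and contractive (because $\sum_i f_i^2 \leq 1$ and each compression is c.p.c.). For $\phi^{(i)}$ I would send a block matrix supported on $U \in \U_i$ back into $\B(\ell^2(X))$ by the obvious inclusion $\ell^2(U) \hookrightarrow \ell^2(X)$, extending by zero; because distinct $U, U' \in \U_i$ are $r$-disjoint, images with orthogonal supports stay orthogonal, which is exactly what forces $\phi^{(i)}$ to be order zero. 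The composite $\sum_i \phi^{(i)} \psi^{(i)}(T) = \sum_i f_i T f_i$ should then reconstruct $T$ up to an error controlled by $\ppg(T)$ times the Hölder constants: using that $f_i$ varies slowly on the scale $R$ one estimates $\|\sum_i f_i T f_i - T\|$ to be small, and here the bound $\sum_i C_i < 1/q$ from Lemma \ref{lem:cover for C(q)} is what drives the error below $\varepsilon$. One also checks that $f_i T f_i$ is (approximately) finite-propagation and supported near $\bigcup \U_i$ so it genuinely factors through the chosen $F^{(i)}$.

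I expect the main obstacle to be the norm estimate showing $\|\sum_i f_i T f_i - T\| < \varepsilon$ on finite-propagation operators, and more subtly the verification that $\phi^{(i)}$ is genuinely order zero and contractive rather than merely positive. The order-zero property hinges on the $r$-disjointness of $\U_i$ together with the propagation bound on $T$: if $r$ exceeds the propagation of the relevant operators, the compressions into different members of $\U_i$ have orthogonal ranges and domains, so products of orthogonal positives vanish. The delicate point is coordinating the three scales — the propagation $R$ of the test operators, the disjointness scale $r$, and the Hölder constants of the cutoff functions — so that simultaneously the partition of unity reconstructs $T$ to within $\varepsilon$ and the compressions remain order zero; getting the quantitative bookkeeping right (and confirming that, after the small perturbation to finite propagation, the maps land in the finite-dimensional algebras as claimed) is the technical heart of the argument.
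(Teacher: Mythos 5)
Your proposal follows the same skeleton as the Winter--Zacharias argument, which is also the skeleton of this paper's proof of the transfinite analogue (Theorem \ref{introthm:trnudim and trasdim}): approximate $\F$ by finite-propagation operators, take $r$-disjoint uniformly bounded families $\U_0,\dots,\U_n$, cut with the H\"{o}lder partition of unity from Lemma \ref{lem:cover for C(q)}, and sum back. However, there is a genuine gap at the step where you declare ``bounded geometry and uniform boundedness guarantee each $|U|$ is bounded, so each $F^{(i)}$ is finite-dimensional.'' Uniform boundedness and bounded geometry only bound the size of each \emph{block} $\M_{|U|}(\CC)$; the family $\U_i$ itself is infinite whenever $X$ is infinite (uniformly bounded sets can cover an unbounded space only in infinitely many pieces), so your $F^{(i)}=\bigoplus_{U\in\U_i}\M_{|U|}(\CC)$ is an infinite direct sum and is \emph{not} a finite-dimensional $C^*$-algebra, whereas Definition \ref{defn:nuclear dimension} requires genuinely finite-dimensional $F_\lambda$. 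This is exactly why the proof in the paper (and in \cite[Theorem 8.5]{WZ10}) has an additional final step that your proposal omits entirely: the cutting maps are first shown to factor through $A^{(i)}=\prod_{U\in\U_i}\M_{\Nd_{s_i/2}(U)}(\CC)$, a product of matrix algebras of uniformly bounded size, and then Lemma \ref{lem:prod of matrix} (\cite[Lemma 8.4]{WZ10}) is invoked to produce $\ast$-homomorphisms $\tilde\psi^{(i)}\colon A^{(i)}\to F^{(i)}$ and $\tilde\phi^{(i)}\colon F^{(i)}\to A^{(i)}$ into honestly finite-dimensional $F^{(i)}$ with $\tilde\phi^{(i)}\tilde\psi^{(i)}\approx\mathrm{id}$ on the finitely many relevant elements. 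Without this step your construction only yields an approximation through infinite products of matrix algebras, which is not the statement being proved.

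A second step that would fail as written is your choice of blocks: you compress $f_iTf_i$ to $\ell^2(U)$ for $U\in\U_i$, but the functions produced by Lemma \ref{lem:cover for C(q)} are supported on $\Nd_{t_i/2}(\bigcup\U_i)$, which is strictly larger than $\bigcup\U_i$, and $f_i$ can be close to $1$ at points \emph{outside} $\bigcup\U_i$ (e.g.\ at a point at distance $1$ from $\bigcup\U_i$ but far from every $\bigcup\U_j$ with $j>i$, one has $f_i^2=1-2/s_i$). Hence compressing to $\bigoplus_{U}\ell^2(U)$ discards non-negligible mass and your identity $\sum_i\phi^{(i)}\psi^{(i)}(T)=\sum_i f_iTf_i$ is false: writing $P_U$ for the projection onto $\ell^2(U)$, already for $T=\Id$ one gets $\big(\sum_i\sum_{U\in\U_i}P_Uf_i\Id f_iP_U\big)_{x,x}=\sum_{\{i:\,x\in\bigcup\U_i\}}f_i(x)^2$, which at the point above is at most $2/s_i$, so the reconstruction error is close to $1$ rather than less than $\varepsilon$. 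The repair, as in the paper, is to index the blocks by the enlarged sets $\Nd_{s_i/2}(U)$ --- these are still pairwise disjoint because $\U_i$ is $s_i$-disjoint, and their union contains $\supp(f_i)$ --- so that cutting by $f_i$ genuinely lands in $A^{(i)}$ and the only error left is the commutator term controlled by Lemma \ref{lem:estimate for commutant}. Both gaps are repairable, but both repairs are essential content of the actual proof.
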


The notion of asymptotic dimension was generalised to the transfinite case:

\begin{defn}[\cite{Rad10}]\label{defn:trasdim}
Given a metric space $(X,d)$, we define a set $A(X,d) \subseteq \Fin~\NN$ as follows: $\sigma = \{t_0, t_1, \cdots, t_n\} \in \Fin~\NN$ belongs to $A(X,d)$ if and only if there are no uniformly bounded families $\U_0, \U_1, \cdots,\U_n$ such that $\bigcup_{i=0}^n \U_i$ covers $X$ and each $\U_i$ is $t_i$-disjoint for $i=0,1,\cdots,n$. The \emph{transfinite asymptotic dimension} of $(X,d)$ is defined to be $\trasdim~X = \Ord~A(X,d)$.
\end{defn}

It was noted in \cite{Rad10} that for $n\in \NN$, $\asdim~X \leq n$ if and only if $\trasdim~X \leq n$. Moreover, it is clear that $A(X,d)$ is inclusive and hence according to Lemma \ref{lem:ordinal infinite lemma}, we have:

\begin{prop}[{\cite[Proposition 1]{Rad10}}]\label{prop:Property C}
For a metric space $(X,d)$, we have $\trasdim~X < \infty$ if and only if $X$ has asymptotic property C.
\end{prop}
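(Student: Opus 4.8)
The plan is to prove the contrapositive in both directions, namely that $\trasdim~X = \infty$ if and only if $X$ fails to have asymptotic property C, and the engine driving the equivalence is Lemma~\ref{lem:ordinal infinite lemma} applied to the inclusive set $A(X,d)$. Recall that $X$ has asymptotic property C means that for every non-decreasing sequence $R_0 \le R_1 \le \cdots$ of positive reals there exist $n \in \NN$ and uniformly bounded families $\U_0, \dots, \U_n$ covering $X$ with $\U_i$ being $R_i$-disjoint. By the very definition of $A(X,d)$, a canonical $\sigma = \{t_0, \dots, t_n\}$ lies in $A(X,d)$ precisely when no such cover exists for the scales $t_0 < \cdots < t_n$; thus membership in $A(X,d)$ records exactly the failure of the finite test appearing in asymptotic property C.

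For the forward implication, suppose $\trasdim~X = \Ord~A(X,d) = \infty$. Since $A(X,d)$ is inclusive, Lemma~\ref{lem:ordinal infinite lemma} produces a sequence of distinct natural numbers all of whose finite initial segments lie in $A(X,d)$; passing to the increasing enumeration $t_0 < t_1 < \cdots$ of this set and invoking inclusiveness once more, every initial segment $\{t_0, \dots, t_n\}$ still belongs to $A(X,d)$. For each $n$ this says that $X$ admits no uniformly bounded cover whose $i$-th family is $t_i$-disjoint, so the (strictly increasing, hence non-decreasing) sequence $(t_i)$ witnesses the failure of asymptotic property C.

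For the converse, suppose $X$ fails asymptotic property C, witnessed by a sequence $(R_i)$ of positive reals for which no finite test succeeds. Replacing $R_i$ by $\max\{R_0, \dots, R_i\}$ leaves a witnessing sequence (since an $R_i'$-disjoint family is a fortiori $R_i$-disjoint) and makes it non-decreasing, after which we choose natural numbers $t_0 < t_1 < \cdots$ with $t_i \ge R_i'$. If some $\{t_0, \dots, t_n\}$ failed to lie in $A(X,d)$, the resulting cover with $t_i$-disjoint families would, because $t_i \ge R_i' \ge R_i$, be a valid cover for the scales $R_i$, contradicting the choice of $(R_i)$. Hence every initial segment $\{t_0, \dots, t_n\}$ lies in $A(X,d)$, and Lemma~\ref{lem:ordinal infinite lemma} gives $\Ord~A(X,d) = \infty$, i.e.\ $\trasdim~X = \infty$.

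The only genuine friction is the bookkeeping mismatch between the two formulations: $A(X,d)$ is indexed by finite \emph{sets} of natural numbers written in canonical increasing order, whereas asymptotic property C quantifies over \emph{sequences} of positive reals. I expect this to be the main point to get right, and it is resolved by two elementary observations used repeatedly above: inclusiveness of $A(X,d)$, which lets one replace an abstract sequence of distinct scales by its monotone rearrangement, and the monotonicity of disjointness (an $r$-disjoint family is $r'$-disjoint for every $r' \le r$), which simultaneously reduces to non-decreasing sequences and lets one pass from arbitrary positive real scales to dominating integer scales. Everything else is a direct translation through the definition of $A(X,d)$.
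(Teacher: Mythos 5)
Your proposal is correct and follows exactly the route the paper indicates: it observes that $A(X,d)$ is inclusive and applies Lemma~\ref{lem:ordinal infinite lemma}, so that $\Ord~A(X,d) = \infty$ is equivalent to the existence of an increasing sequence of scales all of whose initial segments lie in $A(X,d)$, which is precisely the failure of asymptotic property C. The bookkeeping you carry out (monotone rearrangement via inclusiveness, and monotonicity of $r$-disjointness to pass between real and integer scales) is the same translation implicit in the paper's one-line derivation from Radul's result.
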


Here the notion of asymptotic property C was introduced by Dranishnikov \cite{Dra00}. A metric space $(X,d)$ is said to have \emph{asymptotic property C} if for any sequence of natural numbers $n_1 < n_2 < \cdots$, there exists a finite sequence of uniformly bounded families $\U_0, \U_1, \cdots,\U_n$ such that $\bigcup_{i=0}^n \U_i$ covers $X$ and $\U_i$ is $n_i$-disjoint.

Inspired by Proposition \ref{prop:WZ09 uniform Roe}, we study whether the transfinite nuclear dimension of the uniform Roe algebra can be bounded by the transfinite asymptotic dimension of the underlying space:

\begin{thm}[Theorem \ref{introthm:trnudim and trasdim}]
Let $(X,d)$ be a discrete metric space of bounded geometry. Then we have $\trnudim~C^*_u(X) \leq \trasdim~X$.
\end{thm}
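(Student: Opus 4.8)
The plan is to bound $\Ord~N_{C^*_u(X)}(\F,q)$ by $\Ord~A(X,d)$ directly, by showing that a canonical $\sigma=\{t_0,\dots,t_n\}$ failing to lie in $A(X,d)$ (i.e.\ admitting uniformly bounded families $\U_0,\dots,\U_n$ witnessing the relevant disjoint-cover condition) forces a rescaled version of $\sigma$ to fall outside $N_{C^*_u(X)}(\F,q)$ for a suitable finite $\F$ and $q$. Concretely, I would fix $q\in\NN$ and a finite $\F\subseteq C^*_u(X)$, and first reduce to the case where $\F$ consists of finite-propagation operators (approximating each element of $\F$ to within $1/q$ by truncating propagation, which is possible since $\CC_u[X]$ is dense); let $S$ be a common propagation bound. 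Then I would define a map $\Phi\colon\NN\to\NN$ of the form $k\mapsto c\cdot 2^k$ (matching the scales $\{3q^2\cdot 2^{2k}\}$ appearing in Lemma~\ref{lem:cover for C(q)}, whose H\"older-partition-of-unity output is engineered precisely to make $\sum_i C_i<1/q$) and show that $\Phi$ carries $N_{C^*_u(X)}(\F,q)$ into $A(X,d)$ in the sense required by Lemma~\ref{lem:ordinal bijection lemma}, which will yield $\trnudim~C^*_u(X)\le\trasdim~X$.

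The heart of the argument is the construction, from the covering data witnessing $\sigma\notin A(X,d)$, of maps $(F,\psi,\phi)$ witnessing $\Phi(\sigma)\notin N_{C^*_u(X)}(\F,q)$. This is exactly the Winter--Zacharias construction from \cite[Theorem~8.5]{WZ10} made quantitative. For each $\U_i$, I would set $F^{(i)}=\bigoplus_{U\in\U_i}\M_{|U|}(\CC)$ (finite dimensional by bounded geometry), let $\psi^{(i)}$ compress a finite-propagation operator to the blocks indexed by the sets $U\in\U_i$, and build $\phi^{(i)}$ using the $1/2$-H\"older cutoff functions $f_i$ from Lemma~\ref{lem:cover for C(q)} as a partition of unity, so that $\phi\circ\psi$ acts by $T\mapsto\sum_i\sum_U f_i\, (\chi_U T\chi_U)\, f_i$ up to the appropriate normalisation. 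The $R$-disjointness of each $\U_i$ (with $R=\Phi(t_i)$ large relative to $S$) guarantees that $\phi^{(i)}$ is order zero and contractive on each block, and that the off-diagonal error $\|\phi\circ\psi(T)-T\|$ is controlled by $S$ times the total H\"older constant $\sum_i C_i<1/q$, giving condition~(2) of Definition~\ref{defn:set for trnudim}.

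The condition I expect to require the most care is the near-orthogonality estimate~(1) of Definition~\ref{defn:set for trnudim}, namely $\|\psi^{(i)}(a')\psi^{(i)}(b')\|<1/t_i+\|a'b'\|$ for the perturbed elements. Here $\psi^{(i)}$ is a block-diagonal compression, so for finite-propagation $a,b$ with propagation $\le S$ the product $\psi^{(i)}(a)\psi^{(i)}(b)$ differs from $\psi^{(i)}(ab)$ only through contributions crossing the boundaries between the $t_i$-disjoint sets $U\in\U_i$; since those sets are separated by more than $S$, and propagation is additive under multiplication, I would show the cross terms vanish or are bounded by a quantity decaying in $t_i$, yielding the $1/t_i$ slack. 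The main obstacle is organising these estimates so that a \emph{single} rescaling $\Phi$ simultaneously (a) makes Lemma~\ref{lem:cover for C(q)} applicable with $\sum_i C_i<1/q$, (b) makes the disjointness scales $\Phi(t_i)$ dominate the propagation $S$ enough to control both the order-zero property of $\phi^{(i)}$ and the near-orthogonality of $\psi^{(i)}$, and (c) preserves cardinalities so Lemma~\ref{lem:ordinal bijection lemma} applies; threading all three through one monotone map $\Phi$ is the technically delicate point, but the choice $\Phi(k)=3q^2\cdot 2^{2k}$ is exactly tuned for this.
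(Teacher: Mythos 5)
Your high-level plan (rescale via Lemma \ref{lem:ordinal bijection lemma}, reduce to finite propagation, run the Winter--Zacharias construction with the H\"older partition of unity of Lemma \ref{lem:cover for C(q)}) is indeed the paper's strategy, but the execution has a decisive gap at exactly the point you flag as the heart of the argument: the near-orthogonality estimate. You put the sharp cutoffs $\chi_U$ inside $\psi^{(i)}$ and the H\"older functions $f_i$ inside $\phi^{(i)}$, and claim the cross terms ``vanish or are bounded by a quantity decaying in $t_i$'' because the sets of $\U_i$ are separated by more than the propagation $S$. Separation kills the terms \emph{between} different blocks, but does nothing to the boundary terms \emph{inside} a block, and these have norm of order $\|a\|\,\|b\|$ no matter how large the separation is. Concretely, take a point $x$ in a block $N(U)$ and $z \notin N(U)$ with $d(x,z)\leq S$ (such pairs exist since blocks are bounded), and set $a = e_{x,x}-e_{x,z}$, $b = e_{x,x}+e_{z,x}$ (matrix units of propagation $\leq S$): then $ab=0$, while $\chi_{N(U)} a \chi_{N(U)} \cdot \chi_{N(U)} b \chi_{N(U)} = e_{x,x}$, so $\|\psi^{(i)}(a)\psi^{(i)}(b)\| = 1$ against $\|ab\|=0$, and no $1/q$-perturbation $a',b'$ can repair condition (1) of Definition \ref{defn:set for trnudim}. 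This is precisely why the paper puts the $f_i$ \emph{inside} the compression, $\Psi^{(i)}(a) = f_i a f_i$, so that the defect becomes a commutator controlled by Lemma \ref{lem:estimate for commutant}, and why its $\Phi^{(i)}$ is a $\ast$-homomorphism (your $\phi^{(i)}: m \mapsto f_i m f_i$ is not even order zero, since $m f_i^2 m' \neq 0$ for orthogonal $m,m'$ in one block). Moreover the commutator defect decays only like $MK\sqrt{3/s_i}$ in the disjointness scale $s_i$ --- a square root --- which dictates the direction of the rescaling: the paper shows $\sigma \in N(\F,q) \Rightarrow \Theta(\sigma) \in A(X,d)$ by taking covers at the \emph{large} scales $s_i = \Theta(t_i)$, with $\Theta(t) > 12t^2M^2K^2$ and $\Theta(\NN)$ inside the scale set of Lemma \ref{lem:cover for C(q)} for the parameter $3qM$, so that $2MK\sqrt{3/s_i} < 1/t_i$. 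Your write-up is inconsistent about this direction (your opening sentence derives ``rescaled $\sigma \notin N(\F,q)$'' from covers at scales $t_i$, which would require an exponentially small defect from a $\sqrt{1/t_i}$-size error), and your tuning $\Phi(k) = 3q^2\cdot 2^{2k}$ omits the constants $M$ and $K$ altogether, without which the required inequality can already fail at $t_0 = q$.

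There is a second genuine gap: your $F^{(i)} = \bigoplus_{U\in\U_i}\M_{|U|}(\CC)$ is \emph{not} finite dimensional. Bounded geometry bounds each block, but $\U_i$ is in general an infinite family, whereas Definition \ref{defn:set for trnudim}(a) requires $F$ finite dimensional. The paper handles this by first mapping into $A^{(i)} = \prod_{U\in\U_i}\M_{\Nd_{s_i/2}(U)}(\CC)$ and then invoking Lemma \ref{lem:prod of matrix} to factor the finitely many elements $\Psi^{(i)}(\F)$ through a genuinely finite-dimensional $F^{(i)}$ via $\ast$-homomorphisms $\tilde{\psi}^{(i)}, \tilde{\phi}^{(i)}$ with error $\frac{1}{2q(n+1)}$; since $\ast$-homomorphisms preserve both the order-zero property and the near-orthogonality estimate, all conditions survive this factorization. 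This step is entirely absent from your proposal, and without it the triple $(F,\psi,\phi)$ you construct does not witness $\sigma \notin N(\F,q)$ in the sense required.
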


We need the following well-known auxiliary lemma:

\begin{lem}\label{lem:estimate for commutant}
Let $(X,d)$ be a discrete metric space of bounded geometry. For each $R>0$, denote $N_R:=\sup_{x\in X} |B(x,R)|$. Then for any $a \in \CC_u[X]$ and $\alpha$-H\"{o}lder function $h\in \ell^\infty(X)$ with constant $C$, we have
\[
\|[a,h]\| \leq C \cdot \|a\|  \cdot \ppg(a)^\alpha \cdot N_{\ppg(a)}.
\]
\end{lem}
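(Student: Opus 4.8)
The plan is to pass to the matrix picture and apply the Schur test. Viewing $h \in \ell^\infty(X)$ as the diagonal multiplication operator on $\ell^2(X)$, I would first compute the matrix entries of the commutator. Since the multiplication operator has matrix $h(x)\delta_{x,y}$ and $a = (a_{x,y})$, a direct computation shows that the commutator is the Schur multiplier of $a$ by the function $(x,y)\mapsto h(y)-h(x)$, namely
\[
[a,h]_{x,y} = a_{x,y}\big(h(y) - h(x)\big) \quad \text{for all } x,y \in X.
\]
In particular $[a,h]$ inherits the propagation of $a$: writing $R := \ppg(a)$, the entry $[a,h]_{x,y}$ vanishes whenever $d(x,y) > R$.

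Next I would estimate the individual entries. Using that each matrix coefficient of a bounded operator satisfies $|a_{x,y}| = |\langle a\delta_y, \delta_x\rangle| \leq \|a\|$, together with the $\alpha$-H\"older bound on $h$ with constant $C$, we obtain for every pair $(x,y)$ with $d(x,y) \leq R$ that
\[
\big|[a,h]_{x,y}\big| \leq |a_{x,y}| \cdot C\, d(x,y)^\alpha \leq C\,\|a\|\,R^\alpha,
\]
while the entry is zero otherwise; here $d(x,y)^\alpha \le R^\alpha$ because $\alpha>0$ and the entry is supported on $d(x,y)\le R$.

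The final step is to convert this entrywise bound into an operator-norm bound via the Schur test, and this is where bounded geometry enters. For each fixed $x$, the number of $y$ with $[a,h]_{x,y} \neq 0$ is at most $|B(x,R)| \leq N_R$, so the row sums satisfy
\[
\sum_{y \in X} \big|[a,h]_{x,y}\big| \leq C\,\|a\|\,R^\alpha \cdot N_R,
\]
and by symmetry each column sum admits the same bound. The Schur test then yields $\|[a,h]\| \leq C\,\|a\|\,R^\alpha\,N_R = C\,\|a\|\,\ppg(a)^\alpha\,N_{\ppg(a)}$, as claimed.

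I do not anticipate a genuine obstacle here: the only points requiring care are the correct reading of $[a,h]$ as a Schur multiplier of $a$ by $(x,y)\mapsto h(y)-h(x)$, and the bookkeeping that couples the finite-propagation cutoff $d(x,y)\le R$ simultaneously to the H\"older estimate $d(x,y)^\alpha \le R^\alpha$ and to the counting bound $N_R$ provided by bounded geometry.
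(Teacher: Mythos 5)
Your proof is correct and follows essentially the same route as the paper: compute the matrix entries $[a,h]_{x,y} = a_{x,y}(h(y)-h(x))$, bound them entrywise by $C\|a\|\,\ppg(a)^\alpha$ using the H\"older condition on the support $d(x,y)\le\ppg(a)$, and then convert the entrywise bound into an operator-norm bound using bounded geometry. The only difference is that the paper outsources this last conversion to \cite[Lemma 8.1]{WZ10}, whereas you prove it directly via the Schur test; your argument is a self-contained (and correct) substitute for that citation.
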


\begin{proof}
For $x,y\in X$, we have $[a,h]_{x,y} = a_{x,y} \cdot (h(x) - h(y))$. Hence we have $\left|[a,h]_{x,y}\right| \leq \left|a_{x,y}\right|\cdot C \cdot \ppg(a)^\alpha$. Now the result follows directly from \cite[Lemma 8.1]{WZ10}.
\end{proof}

We also need the following result to deal with products of matrix algebras. For $n\in \NN$, denote $\M_n(\CC)$ the complex valued $n \times n$-matrix algebra.

\begin{lem}[{\cite[Lemma 8.4]{WZ10}}]\label{lem:prod of matrix}
Let $K$ be an index set and $(n_k)_{k\in K}$ be a bounded family of positive numbers. Then for any $\varepsilon>0$ and finite $\F \subseteq \prod_{k\in K} \M_{n_k}(\CC)$, there is a finite dimensional $C^*$-algebra $F$ and $\ast$-homomorphisms $\psi: \prod_{k\in K} \M_{n_k}(\CC) \to F$ and $\phi: F \to \prod_{k\in K} \M_{n_k}(\CC)$ such that $\|\phi \circ \psi(a) - a\| < \varepsilon$ for each $a\in \F$.
\end{lem}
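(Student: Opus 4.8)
The plan is to exploit the hypothesis that $(n_k)_{k\in K}$ is bounded in order to reduce the problem to finitely many matrix sizes, and then to use the total boundedness of balls in a fixed (finite-dimensional) matrix algebra to partition the index set into finitely many pieces on which every element of $\F$ is nearly constant. First I would group the index set by matrix size: since $(n_k)$ is bounded, set $N:=\sup_k n_k$ and $K_m:=\{k\in K: n_k=m\}$ for $1\le m\le N$. This yields a canonical $\ast$-isomorphism
\[
\prod_{k\in K}\M_{n_k}(\CC)\;\cong\;\bigoplus_{m=1}^{N}\ell^\infty\big(K_m,\M_m(\CC)\big),
\]
under which the norm becomes $\max_m \sup_{k\in K_m}\|a_k\|$. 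It therefore suffices to construct maps factor by factor and take direct sums, since both the construction and the error estimate respect this block decomposition.

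Next, fix $m$ and write the $m$-th components of the elements of $\F$ as $a^{(1)},\dots,a^{(p)}\in \ell^\infty(K_m,\M_m(\CC))$. I would consider the map $\Phi:K_m\to \M_m(\CC)^p$ given by $\Phi(k)=(a^{(1)}_k,\dots,a^{(p)}_k)$. Its image lies in a bounded subset of the finite-dimensional space $\M_m(\CC)^p$, which is totally bounded; hence I can cover the image by finitely many sets of diameter $<\varepsilon$ (in the max norm on the product) and pull these back to a finite partition $K_m=P_1\sqcup\cdots\sqcup P_{L}$ with $\diam\,\Phi(P_l)<\varepsilon$ for every $l$. In particular, all coordinates $a^{(j)}$ vary by less than $\varepsilon$ on each piece $P_l$.

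I would then build the required maps from this partition. Choosing a representative $k_l\in P_l$, define $\psi_m:\ell^\infty(K_m,\M_m(\CC))\to \M_m(\CC)^{L}$ by $\psi_m(a)=(a_{k_1},\dots,a_{k_{L}})$; being a tuple of point-evaluations, this is a $\ast$-homomorphism. Define $\phi_m:\M_m(\CC)^{L}\to \ell^\infty(K_m,\M_m(\CC))$ by sending $(b_1,\dots,b_{L})$ to the function equal to $b_l$ on $P_l$; this piecewise-constant extension is again a $\ast$-homomorphism, since products and adjoints are computed coordinatewise in the product and pointwise in $\ell^\infty$. Then $(\phi_m\circ\psi_m(a))_k=a_{k_l}$ for $k\in P_l$, so
\[
\|\phi_m\circ\psi_m(a^{(j)})-a^{(j)}\|=\sup_{l}\ \sup_{k\in P_l}\|a^{(j)}_{k_l}-a^{(j)}_k\|<\varepsilon
\]
by the diameter bound. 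Taking $F:=\bigoplus_{m=1}^N \M_m(\CC)^{L_m}$, $\psi:=\bigoplus_m\psi_m$ and $\phi:=\bigoplus_m\phi_m$ (composed with the isomorphism above) then finishes the proof, the overall error being the maximum of the per-block errors.

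The point to get right — more a matter of choosing the construction correctly than a genuine obstacle — is that $\psi$ and $\phi$ must be honest $\ast$-homomorphisms rather than merely completely positive maps; this is exactly why one uses point-evaluations and piecewise-constant extensions instead of any averaging or conditional-expectation procedure. The whole argument rests on the two finiteness inputs: boundedness of $(n_k)$, giving finitely many matrix sizes, and finite-dimensionality of each $\M_m(\CC)$, giving total boundedness. Together these ensure that a single finite partition of the index set suffices to approximate the given finite set $\F$.
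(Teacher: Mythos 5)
Your proof is correct, and there is nothing in this paper to check it against line by line: the authors do not prove this lemma, they import it verbatim as \cite[Lemma 8.4]{WZ10}. Your argument is a clean, self-contained version of the standard one behind the cited result. The two finiteness reductions are exactly right: boundedness of $(n_k)$ gives the block decomposition $\prod_{k\in K}\M_{n_k}(\CC)\cong\bigoplus_{m=1}^{N}\ell^\infty(K_m,\M_m(\CC))$, and total boundedness of the bounded set $\Phi(K_m)\subseteq\M_m(\CC)^p$ gives the finite partition $K_m=P_1\sqcup\cdots\sqcup P_L$ on which all elements of $\F$ are nearly constant; point evaluations and piecewise-constant extension are then genuinely multiplicative, which is the feature this paper actually uses in the proof of Theorem \ref{introthm:trnudim and trasdim} (the estimate $\|\tilde{\psi}^{(i)}(\Psi^{(i)}(a'))\tilde{\psi}^{(i)}(\Psi^{(i)}(b'))\| = \|\tilde{\psi}^{(i)}(\Psi^{(i)}(a')\Psi^{(i)}(b'))\|$ needs $\tilde{\psi}^{(i)}$ to be a $\ast$-homomorphism, not merely c.p.c.). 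For comparison, the route in the literature behind \cite[Lemma 8.4]{WZ10} is usually phrased through $\ell^\infty(K_m,\M_m(\CC))\cong C(\beta K_m)\otimes\M_m(\CC)$ together with zero-dimensionality of $\beta K_m$ for discrete $K_m$ (equivalently, $\ell^\infty(K_m)$ is the closed span of its projections): a clopen partition of $\beta K_m$ is literally a finite partition of $K_m$, so that argument produces the same data as yours, just packaged conceptually as ``nuclear dimension zero''. Your direct version buys elementarity and makes explicit that a \emph{single} partition handles the whole finite set $\F$ simultaneously, with the error controlled by the diameter bound. Two cosmetic points only: after pulling back the finite cover of $\Phi(K_m)$ you should disjointify (e.g.\ $P_l=\Phi^{-1}(B_l)\setminus\bigcup_{l'<l}\Phi^{-1}(B_{l'})$, discarding empty pieces) before choosing representatives, which you implicitly do by saying ``partition''; and empty blocks $K_m$ contribute zero summands and are harmless. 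Neither affects the argument.
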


\begin{proof}[Proof of Theorem \ref{introthm:trnudim and trasdim}]
Given $q \in \NN$ and a finite subset $\F=\{b_0, b_1, \cdots,b_m\} \subseteq C^*_u(X)$, we would like to show $\Ord~N(\F,q) \leq \Ord~A(X,d)$. Set
\[
K:=2\cdot \max\left\{\|b_j\|: j=0,1,\cdots,m\right\}+1.
\]
Since $\CC_u[X]$ is dense in $C^*_u(X)$, we can choose for each $j=0,1,\cdots,m$ an element $a_j \in \CC_u[X]$ such that $\|a_j - b_j\| \leq \frac{1}{12qK}$. Then we have $\|a_j\| \leq K$ for each $j$. Take $M$ to be an integer greater than:
\[
\sup\left\{\|a_j\|  \cdot \ppg(a_j)^{1/2} \cdot N_{\ppg(a_j)}: j=0,1,\cdots,m\right\}.
\]

Choose an injective and monotonely increasing map $\Theta: \NN \to \NN$ such that $\Theta(t) > 12t^2M^2K^2$ for $t\in \NN$, and $\Theta(\NN) \subseteq \{3\cdot (3qM)^2 \cdot 2^{2k}: k\in \NN \setminus \{0\}\}$. According to Lemma \ref{lem:ordinal bijection lemma}, it suffices to show that $\Theta(\sigma) \in A(X,d)$ whenever $\sigma \in N(\F,q)$. Fix a canonical $\sigma= \{t_0, t_1, \cdots, t_n\} \in N(\F,q)$, and assume that $\Theta(\sigma) \notin A(X,d)$. For each $i=0,1,\cdots,n$, denote $s_i:=\Theta(t_i)$ and then $\Theta(\sigma) = \{s_0, s_1, \cdots, s_n\}$. By assumption, there exist uniformly bounded families $\U_0, \U_1, \cdots,\U_n$ such that $\bigcup_{i=0}^n \U_i$ covers $X$ and each $\U_i$ is $s_i$-disjoint for $i=0,1,\cdots,n$. Since $\Theta(\NN) \subseteq \{3\cdot (3qM)^2 \cdot 2^{2k}: k\in \NN \setminus \{0\}\}$, it follows from Lemma \ref{lem:cover for C(q)} that there exist non-negative functions $f_0, f_1, \cdots,f_n$ on $X$ satisfying the following:
\begin{enumerate}
  \item $\supp(f_i) \subset \Nd_{s_i/2}(\bigcup \U_i)$ for $i=0,1,\cdots,n$;\\[-.3cm]
  \item $\sum_{i=0}^n f_i^2 \equiv 1$; \\[-.3cm]
  \item each $f_i$ is $1/2$-H\"{o}lder with constant at most $\sqrt{\frac{3}{s_i}}$, and we have $\sum\limits_{i=0}^n \sqrt{\frac{3}{s_i}} < \frac{1}{3qM}$.
\end{enumerate}

Following the proof for \cite[Theorem 8.5]{WZ09}, denote
\[
A^{(i)}:=\prod_{U \in \U_i} \M_{\Nd_{s_i/2}(U)}(\CC) \quad \text{for} \quad i=0,1,\cdots, n.
\]
We construct the following maps:
\[
\Psi: C^*_u(X) \longrightarrow A^{(0)} \oplus A^{(1)} \oplus \cdots \oplus A^{(n)} \quad \text{by} \quad a \mapsto (f_0af_0, f_1af_1, \cdots,f_naf_n),
\]
and
\[
\Phi: A^{(0)} \oplus A^{(1)} \oplus \cdots \oplus A^{(n)} \longrightarrow C^*_u(X) \quad \text{by} \quad (a'_0, a'_1, \cdots, a'_n) \mapsto a'_0 + a'_1 + \cdots + a'_n.
\]
Here we regard each $A^{(i)}$ as a $C^*$-subalgebra in $C^*_u(X)$ thanks to the choice above.
Obviously $\Psi$ is c.p.c., $\Phi$ is c.p., and $\Phi \circ \Psi(\Id) = \sum_{i=0}^n f_i^2 = \Id$ (which implies that $\|\Phi \circ \Psi\| \leq 1$). Denote the associated maps $\Psi^{(i)}: C^*_u(X) \to A^{(i)}$ and $\Phi^{(i)}: A^{(i)} \to C^*_u(X)$. It is clear that each $\Phi^{(i)}$ is a $\ast$-homomorphism, and hence has order zero.

Now we estimate $\|\Phi \circ \Psi (b) - b\|$ for $b\in \F$. For $j=0,1,\cdots, m$, we have:
\begin{eqnarray*}
\|\Phi \circ \Psi (b_j) - b_j\| &\leq & \|\Phi \circ \Psi (a_j) - a_j\| + \frac{1}{6qK} = \left\| \sum_{i=0}^n f_i a_j f_i - \sum_{i=0}^n f_i^2 a_j\right\| + \frac{1}{6qK} \\
&=& \big\| \sum_{i=0}^n f_i [a_j, f_i]\big\| + \frac{1}{6qK}  \leq  \sum_{i=0}^n \left\|[a_j, f_i]\right\| + \frac{1}{6qK}  \\
& \leq & \sum_{i=0}^n M \cdot \sqrt{\frac{3}{s_i}} + \frac{1}{6qK} < \frac{1}{3q} + \frac{1}{6qK} < \frac{1}{2q},
\end{eqnarray*}
where we use Lemma \ref{lem:estimate for commutant} for the third to last inequality.
On the other hand, for each $i=0,1,\cdots,n$ and $b_j,b_k \in \F$, we have $\|b_j-a_j\| < \frac{1}{12qK}$ and $\|b_k-a_k\| < \frac{1}{12qK}$. Hence
\begin{eqnarray*}
\left\|\Psi^{(i)}(a_j)\Psi^{(i)}(a_k)\right\| &=& \left\| f_ia_jf_i \cdot f_ia_kf_i \right\|\\
&\leq & \left\| f_i^2 a_j \cdot a_kf_i^2 \right\| + \left\| f_i^2a_j \cdot [f_i,a_k]f_i \right\| + \left\| f_i[f_i,a_j] \cdot f_ia_kf_i \right\|\\
& \leq & \|a_ja_k\| + \left\|[f_i,a_k]\right\| \cdot \|a_j\| + \left\| [f_i,a_j] \right\| \cdot \|a_k\|\\
& \leq & \|a_ja_k\| + \left\|[f_i,a_k]\right\| \cdot K + \left\|[f_i,a_j]\right\| \cdot K \\
& \leq & \|a_ja_k\| + 2\sqrt{\frac{3}{s_i}} \cdot MK\\
& <& \|a_ja_k\| + \frac{1}{t_i}
\end{eqnarray*}
where we use Lemma \ref{lem:estimate for commutant} in the penultimate inequality and the last one follows from the choice of $\Theta$.

Finally, we apply Lemma \ref{lem:prod of matrix} to each $A^{(i)}$ and obtain $\ast$-homomorphisms $\tilde{\psi}^{(i)} : A^{(i)} \to F^{(i)}$ and $\tilde{\phi}^{(i)}: F^{(i)} \to A^{(i)}$ for some finite-dimensional $C^*$-algebra $F^{(i)}$ such that $\left\| \tilde{\phi}^{(i)} \circ \tilde{\psi}^{(i)}(a') - a' \right\| < \frac{1}{2q(n+1)}$ for any $a' \in \Psi^{(i)}(\F)$. Setting
\[
\psi^{(i)}:=\tilde{\psi}^{(i)} \circ \Psi^{(i)}: C^*_u(X) \longrightarrow F^{(i)} \quad \text{and} \quad \phi^{(i)}:=\Phi^{(i)} \circ \tilde{\phi}^{(i)}:  F^{(i)} \longrightarrow C^*_u(X),
\]
and $F:=F^{(0)} \oplus \cdots \oplus F^{(n)}$, we obtain c.p. maps
\[
\psi:=(\psi^{(i)})_{i=0}^n: C^*_u(X) \longrightarrow F \quad \text{and} \quad \phi:=\sum_{i=0}^n \phi^{(i)}: F \longrightarrow C^*_u(X).
\]
Now we conclude:
\begin{enumerate}[$\bullet$]
  \item $\psi$ is c.p.c, and each $\phi^{(i)}$ is c.p.c. and order zero;\\[-.3cm]
  \item for any $i=0,1,\cdots,n$ and $a,b \in \F$, there exist $a', b' \in A$ with $\|a-a'\| < 1/q$ and $\|b-b'\| < 1/q$ such that
  \[
  \|\psi^{(i)}(a')\psi^{(i)}(b')\| = \big\| \tilde{\psi}^{(i)} \Psi^{(i)}(a') \cdot \tilde{\psi}^{(i)} \Psi^{(i)}(b') \big\| = \big\| \tilde{\psi}^{(i)}\left(\Psi^{(i)}(a') \cdot\Psi^{(i)}(b')\right) \big\| < \|a'b'\| + 1/t_i.
  \]
  \item for any $a\in \F$, we have
  \[
  \|\phi\circ \psi(a) - a\| = \left\| \sum_{i=0}^n \Phi^{(i)}\tilde{\phi}^{(i)}\tilde{\psi}^{(i)} \Psi^{(i)}(a) - a \right\| \leq \left\| \sum_{i=0}^n \Phi^{(i)}\Psi^{(i)}(a) - a \right\| + \frac{1}{2q} < \frac{1}{q}.
  \]
\end{enumerate}
This means that $\sigma \notin N(\F,q)$, which is a contradiction. Therefore, we conclude the proof.
\end{proof}

\begin{ex}\label{ex:wreath product}
The first named author and Y. Wu proved $\trasdim(\mathbb{Z} \wr \mathbb{Z})\leq \omega+1$ in \cite{ZW23}. Hence applying Theorem \ref{introthm:trnudim and trasdim}, we obtain $\trnudim~ C_u^*(\mathbb{Z}\wr\mathbb{Z})\leq \omega+1$. Together with Radul, they also proved in \cite{WZR22} that given an infinitely countable ordinal number $\alpha$, there exists a metric space $X_{\alpha}$ with $\trasdim~X_{\alpha}=\alpha$. Hence applying Theorem \ref{introthm:trnudim and trasdim} again, we obtain $\trnudim~C_u^*(X_{\alpha}) \leq\alpha$.
\end{ex}

Combining Corollary \ref{introcor:cor to thm omega}, Proposition \ref{prop:Property C} and Theorem \ref{introthm:trnudim and trasdim}, we finally obtain Corollary \ref{introcor:cor to thm asdim}.

%\begin{cor}\label{cor:cor to thm asdim}
%Let $(X, d)$ be a discrete metric space of bounded geometry with asymptotic Property C, then the uniform Roe algebra $C^*_u(X)$ has the corona factorisation property and its Cuntz semigroup has the $\omega$-comparison property.
%\end{cor}

\section{Some discussion on the commutative case}\label{sec:commu. case}

In this section, we would like to study the transfinite nuclear dimension for unital commutative $C^*$-algebras and relate them to the transfinite topological covering dimension of their spectra. For simplicity, here we only focus on the case that the spectrum is metrisable.

For a compact metric space $X$, recall that (see, \emph{e.g.}, \cite[Definition 1.6.7]{Eng78}) its \emph{covering dimension} is no more than $n$, denoted by $\dim X \leq n$, if for every finite open cover $\U$ of $X$ there exists a finite open refinement $\V$ such that every point in $X$ belongs to at most $n+1$ elements in $\V$.

For a family $\U$ of subsets in $X$, we say that it is \emph{$C$-bounded} for some $C>0$ if the diameter of each element in $\U$ is bounded by $C$, and \emph{disjoint} if $U \cap V = \emptyset$ for $U,V \in \U$ with $U \neq V$.

%$\m(\V) \leq n$. Here $\m(\V)$ is the \emph{multiplicity} of $\V$, defined to be the smallest number $m$ for which every point in $X$ belongs to at most $m$ elements in $\V$. For a finite family $\V$ of open subsets of $X$, we say that $\V$ is \emph{$C$-bounded} for some $C>0$ if the diameter of each element in $\V$ is bounded by $C$.

%also denote
%\[
%\mesh(\V):=\sup\{\diam(V): V \in \V\},
%\]
%and say that $\V$ is \emph{strictly disjoint} if $d(U,V) >0$ for any $U,V \in \V$ with $U \neq V$.

We recall the following characterisation, which is a combination of \cite[Theorem 1.6.12]{Eng78} and \cite[Proposition 1.5]{KW04}:

\begin{prop}\label{thm:strongequconditionofdim}
For a compact metric space $(X,d)$, we have $\dim X\leq n$ \emph{if and only if} for any $\varepsilon>0$, there is an $\varepsilon$-bounded finite open cover $\V$ of $X$ which can be decomposed as $\V=\V_0 \cup \cdots \cup \V_n$ where each $\V_i$ is disjoint for $i=0,1,\cdots, n$.
\end{prop}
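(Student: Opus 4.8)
The plan is to prove the two implications separately; the backward direction is routine metric bookkeeping, while the forward direction carries the genuine dimension-theoretic content. For \emph{sufficiency}, I would start from an arbitrary finite open cover $\U$ of $X$ and manufacture a refinement of order at most $n+1$. Since $X$ is compact metric, $\U$ admits a Lebesgue number $\delta>0$, so that every subset of diameter less than $\delta$ is contained in some member of $\U$. Applying the hypothesis with $\varepsilon=\delta/2$ produces a $\tfrac{\delta}{2}$-bounded finite open cover $\V=\V_0\cup\cdots\cup\V_n$ with each $\V_i$ disjoint. Every element of $\V$ then has diameter less than $\delta$ and hence lies inside some member of $\U$, so $\V$ refines $\U$; moreover, each point of $X$ meets at most one set from each disjoint family $\V_i$, so it lies in at most $n+1$ elements of $\V$. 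Thus $\V$ is a finite open refinement of $\U$ of order at most $n+1$, which gives $\dim X\le n$.

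For \emph{necessity}, assume $\dim X\le n$ and fix $\varepsilon>0$. First I would cover $X$ by the open balls of radius $\varepsilon/2$ and extract a finite subcover by compactness; this subcover is $\varepsilon$-bounded. By the definition of covering dimension it has a finite open refinement $\W$ in which every point of $X$ lies in at most $n+1$ members, and as a refinement $\W$ stays $\varepsilon$-bounded. The whole difficulty is now isolated: I must upgrade $\W$, a finite $\varepsilon$-bounded open cover of order at most $n+1$, to an $\varepsilon$-bounded open cover that \emph{splits} into $n+1$ pairwise-disjoint families. This is precisely the combinatorial decomposition furnished by \cite[Theorem 1.6.12]{Eng78}, with the metric refinement bookkeeping as in \cite[Proposition 1.5]{KW04}, and it is where the real work lies.

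I expect this decomposition step to be the main obstacle, and I would carry it out via the nerve. Choosing a partition of unity subordinate to $\W$ gives a map $f\colon X\to |N(\W)|$ into the geometric realization of the nerve, a simplicial complex of dimension at most $n$. Passing to the barycentric subdivision $N(\W)'$, I assign to each vertex (a barycenter $\hat s$ of a simplex $s$ of $N(\W)$) the \emph{type} $\dim s\in\{0,1,\ldots,n\}$, and I let $\V_i$ consist of the $f$-preimages of the open stars $\mathrm{st}(\hat s)$ over the type-$i$ vertices. The crucial observation is that two distinct simplices of equal dimension cannot both occur in a single chain, so two type-$i$ barycenters never span a common simplex of $N(\W)'$; hence their open stars are disjoint and each $\V_i$ is a disjoint family. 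Since the star of $\hat s$ in the subdivision sits inside the star of any vertex of $s$ in $N(\W)$, each pullback lands in the corresponding member of $\W$, so $\V=\V_0\cup\cdots\cup\V_n$ refines $\W$ and is therefore $\varepsilon$-bounded; as the stars of all vertices of $N(\W)'$ cover $|N(\W)|$, the family $\V$ covers $X$. This yields the required decomposition and completes the necessity direction.

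The one point I would double-check in writing up the hard step is that the partition of unity is genuinely subordinate (so that $f^{-1}(\mathrm{st}(v))$ is contained in the corresponding cover element) and that $f$ is well defined with values in an at most $n$-dimensional nerve, which is exactly where the order-$(n+1)$ hypothesis on $\W$ is consumed; everything else is the standard simplicial-complex coloring-by-dimension argument that underlies \cite[Theorem 1.6.12]{Eng78}.
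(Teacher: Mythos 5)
Your proof is correct, but it is worth noting that the paper does not actually prove this proposition at all: it simply recalls it as "a combination of \cite[Theorem 1.6.12]{Eng78} and \cite[Proposition 1.5]{KW04}", delegating both directions to the literature. What you have written is a self-contained argument that fills in exactly what those citations supply. Your sufficiency direction (Lebesgue number $\delta$, apply the hypothesis with $\varepsilon=\delta/2$, observe that disjointness of each $\mathcal{V}_i$ bounds the order by $n+1$) is the routine half and is handled correctly, including the point that a set of diameter at most $\delta/2<\delta$ lies in a member of the original cover. Your necessity direction reproves Engelking's decomposition theorem by the classical nerve argument: the order-$(n+1)$ hypothesis makes the nerve at most $n$-dimensional, colouring the barycenters of the subdivision by the dimension of the underlying simplex gives $n+1$ classes, no two same-dimensional simplices are comparable in a chain so same-coloured open stars are disjoint, and subordination of the partition of unity guarantees $f^{-1}(\mathrm{st}(v_W))\subseteq W$, which keeps the resulting cover a refinement (hence $\varepsilon$-bounded, after the initial pass to a finite cover by $\varepsilon/2$-balls). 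All of these steps check out, including the two points you flagged for verification. The trade-off is the expected one: the paper's citation route is shorter and leans on standard dimension theory, while your argument is longer but makes the proposition independent of \cite{Eng78} and \cite{KW04}, at the cost of importing basic facts about nerves, barycentric subdivisions and canonical maps (which are themselves standard, e.g.\ in Engelking's book). Either is acceptable; if this proof were to be included in the paper, the nerve facts you invoke should be stated precisely or cited, since they carry the real content of the decomposition step.
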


We introduce a version of transfinite covering dimension using coverings as follows:

%We would like to present another way to generalize the covering dimension to the transfinite case for compact metric as follows.

\begin{defn}\label{s-trdim}
For a compact metric space $(X,d)$, define $M_{D(X)} \subseteq \Fin~\NN$ as follows: $\sigma=\{t_0,\cdots,t_n\} \in M_{D(X)}$ if and only if there does not exist an open cover $\U$ which can be decomposed as $\U=\U_0 \cup \cdots \cup \U_n$ such that $\U_i$ is disjoint and $1/t_i$-bounded for each $i=0,1,\cdots,n$. The \emph{transfinite covering dimension} of $X$, denoted by ${\rm d}$-$\trdim X$, is defined to be $\Ord~M_{D(X)}$.
\end{defn}

Combining Lemma \ref{lem:ordinal finite lemma} with Proposition \ref{thm:strongequconditionofdim}, we have:

\begin{lem}\label{s-trdim=dim}
For a compact metric space $X$ and $n\in \NN$, ${\rm d}$-$\trdim X \leq n $ \emph{if and only if} $\dim X \leq n$.
\end{lem}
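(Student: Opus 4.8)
The plan is to prove the equivalence by unwinding both sides through the characterisation of $\dim X \leq n$ in Proposition \ref{thm:strongequconditionofdim} and the combinatorial criterion for $\Ord~M \leq n$ in Lemma \ref{lem:ordinal finite lemma}. Recall that Lemma \ref{lem:ordinal finite lemma} tells us ${\rm d}\text{-}\trdim X = \Ord~M_{D(X)} \leq n$ holds \emph{if and only if} $|\sigma| \leq n$ for every $\sigma \in M_{D(X)}$. So the entire task reduces to translating this cardinality condition on $M_{D(X)}$ into the statement $\dim X \leq n$ and back.

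First I would prove the forward direction, ${\rm d}\text{-}\trdim X \leq n \implies \dim X \leq n$. Using Proposition \ref{thm:strongequconditionofdim}, it suffices to produce, for any $\varepsilon > 0$, an $\varepsilon$-bounded finite open cover decomposing into $n+1$ disjoint subfamilies. Pick any integer $t$ with $1/t < \varepsilon$ and consider the canonical element $\sigma = \{t, t+1, \cdots, t+n\}$ of size $n+1$. Since $\Ord~M_{D(X)} \leq n$ forces every member of $M_{D(X)}$ to have cardinality at most $n$, this $\sigma$ cannot lie in $M_{D(X)}$; by the definition of $M_{D(X)}$ this means there \emph{does} exist an open cover $\U = \U_0 \cup \cdots \cup \U_n$ with each $\U_i$ disjoint and $1/t_i$-bounded. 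Each such $\U_i$ is then $1/t$-bounded, hence $\varepsilon$-bounded; extracting a finite subcover (possible by compactness) and invoking Proposition \ref{thm:strongequconditionofdim} gives $\dim X \leq n$. A small point to handle is that the definition of $M_{D(X)}$ speaks of open covers rather than finite open covers, so I would note that compactness of $X$ lets us pass to a finite subcover while retaining the disjointness and boundedness of each colour class.

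For the converse, $\dim X \leq n \implies {\rm d}\text{-}\trdim X \leq n$, I would argue contrapositively via cardinalities again: it suffices to show every $\sigma \in M_{D(X)}$ has $|\sigma| \leq n$, equivalently that every canonical $\sigma$ of size $n+1$ lies \emph{outside} $M_{D(X)}$. Given such a $\sigma = \{t_0, \cdots, t_n\}$, set $\varepsilon := \min_i 1/t_i$ and apply Proposition \ref{thm:strongequconditionofdim} with this $\varepsilon$ to obtain an $\varepsilon$-bounded finite open cover $\V = \V_0 \cup \cdots \cup \V_n$ with each $\V_i$ disjoint. Since $\V_i$ is $\varepsilon$-bounded and $\varepsilon \leq 1/t_i$, each $\V_i$ is in particular $1/t_i$-bounded, so $\V$ witnesses that $\sigma \notin M_{D(X)}$. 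This establishes that no size-$(n+1)$ set belongs to $M_{D(X)}$, hence $\Ord~M_{D(X)} \leq n$ by Lemma \ref{lem:ordinal finite lemma}.

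I do not anticipate a serious obstacle here; the proof is essentially a bookkeeping exercise matching the quantifier ``$|\sigma| \leq n$ for all $\sigma \in M_{D(X)}$'' from Lemma \ref{lem:ordinal finite lemma} against the ``$\varepsilon$-bounded decomposition into $n+1$ disjoint families'' from Proposition \ref{thm:strongequconditionofdim}. The one place demanding mild care is the asymmetry between the two notions of boundedness: the covering-dimension characterisation gives a uniform $\varepsilon$-bound on all colour classes simultaneously, whereas $M_{D(X)}$ permits a separate threshold $1/t_i$ per class. Both directions are handled by the monotonicity observation that an $\varepsilon$-bounded family with $\varepsilon \leq 1/t_i$ is automatically $1/t_i$-bounded, and by choosing $\varepsilon$ (resp.\ the $t_i$) appropriately small (resp.\ large). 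The only other detail worth flagging explicitly is the open-versus-finite-open cover distinction, which compactness resolves immediately.
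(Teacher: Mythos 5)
Your overall strategy is exactly the paper's: the paper obtains this lemma precisely by combining Lemma \ref{lem:ordinal finite lemma} with Proposition \ref{thm:strongequconditionofdim}, and both directions of your translation are executed correctly, including the two points you rightly flag (per-class thresholds $1/t_i$ versus a uniform $\varepsilon$, and the open-versus-finite-open-cover issue resolved by compactness).

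There is, however, one loose end in the converse direction. Lemma \ref{lem:ordinal finite lemma} requires $|\sigma| \leq n$ for \emph{every} $\sigma \in M_{D(X)}$, so you must exclude from $M_{D(X)}$ all canonical sets of cardinality $n+1$ \emph{or larger}, whereas your argument treats only cardinality exactly $n+1$. Your claimed equivalence (``every $\sigma \in M_{D(X)}$ has $|\sigma| \leq n$'' iff ``no size-$(n+1)$ set lies in $M_{D(X)}$'') is valid only because $M_{D(X)}$ is inclusive, i.e.\ closed under passing to subsets, which you neither state nor justify. The fix is one line, in either of two ways: (i) observe that $M_{D(X)}$ is inclusive, since a witness cover for $\tau \subseteq \sigma$ becomes a witness cover for $\sigma$ after padding the decomposition with empty families (an empty family is vacuously disjoint and vacuously $1/t_i$-bounded); or (ii) run your construction directly for $\sigma = \{t_0 < t_1 < \cdots < t_m\}$ with $m \geq n$: take $\varepsilon := 1/t_m$, obtain $n+1$ disjoint $\varepsilon$-bounded families from Proposition \ref{thm:strongequconditionofdim}, and pad with $m-n$ empty families to get a decomposition into $m+1$ classes in which the $i$-th class is $1/t_i$-bounded, so that $\sigma \notin M_{D(X)}$. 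With that addition your proof is complete and coincides with the paper's intended argument.
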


\begin{rem}
Let us explain the notation ``${\rm d}$-$\trdim X$''. In \cite{Bor88}, Borst introduced another transfinite extension for covering dimension using partitions. Recall that for a compact metric space $X$ and disjoint closed subsets $A,B \subseteq X$, a \emph{partition between $A$ and $B$} is a subset $L \subseteq X$ such that there exist open subsets $U$ and $V\subseteq X$ satisfying $A \subseteq U$, $B \subseteq V$, $U \cap V = \emptyset$ and $L=X \setminus (U \cup V)$. Denote $L(X)$ the set of all pairs $(A,B)$ of disjoint closed subsets of $X$ and by $M_{L(X)}$ the class of all finite pairs $\{(A_i,B_i)\}_{i=0}^n$ of $L(X)$ such that if $L_i$ is a partition between $A_i$ and $B_i$, then $\bigcap_{i=0}^nL_i\neq\emptyset$. Borst defined his \emph{transfinite covering dimension} of $X$ to be $\Ord~M_{L(X)}$.

It is unclear to the authors whether Borst's definition is compatible with ours and to tell the (possible) difference, we choose the notation ``${\rm d}$-$\trdim X$'' to emphasise that our definition makes use of diameters of elements in covers. It turns out that our definition is closely related to certain version of transfinite nuclear dimension as follows, which is our motivation to introduce Definition \ref{s-trdim}.
\end{rem}

\begin{defn}\label{defn:set for s-trnudim}
Let $X$ be a compact metric space, $\F \subseteq C(X)$ be finite and $q \in \NN$. Define $M(\F,q) \subseteq \Fin~\NN$ as follows: $\sigma = \{t_0, t_1, \cdots, t_n\} \in \Fin~\NN$ with $t_0 < t_1 < \cdots < t_n$ belongs to $M(\F,q)$ if and only if $t_0 \geq q$ and there does \emph{not} exist $(F, \psi, \phi)$ where
\begin{enumerate}
 \item[(a)] $F=F^{(0)} \oplus \cdots \oplus F^{(n)}$ and $F^{(i)}=\bigoplus_{j=1}^{n_i}\mathbb{C}$ for each $i=0,\cdots,n$;
 \item[(b)] $\psi=\psi^{(0)} \oplus \cdots \oplus \psi^{(n)}: C(X) \to F$ is a c.p.c. map with $\psi^{(i)}:C(X) \to F^{(i)}$ for each $i$;
 \item[(c)] $\phi: F \to C(X)$ is c.p. and the restriction $\phi^{(i)}:=\phi|_{F^{(i)}}: F^{(i)} \to C(X)$ is contractive and order zero for each $i$,
\end{enumerate}
satisfying the following:
\begin{enumerate}
 \item for any $i=0,1,\cdots,n$ and $a,b \in \F$, there exist $a', b' \in C(X)$ with $\|a-a'\| < 1/q$ and $\|b-b'\|<1/q$ such that $\|\psi^{(i)}(a')\psi^{(i)}(b')\| < 1/t_i + \|a'b'\|$;
 \item $\|\phi\circ \psi(a) - a\|<1/q$ for all $a\in \F$;
 \item for any $ i=0,\cdots,n$ and minimal projection $e \in F^{(i)}$, $\diam (\supp ~\phi^{(i)}(e) )<1/ t_i$.
\end{enumerate}
%To avoid ambiguity, sometimes we also denote $M_{C(X)}(\F,q):=M(\F,q)$.
\end{defn}

\begin{rem}\label{diffbetweentrdimands-trdim}
Note that condition (3) above is only designed for $C(X)$ where $X$ is a metric space. This is the only difference between  $N(\mathcal{F},q)$ defined in Definition \ref{defn:set for trnudim} and $M(\mathcal{F},q)$ above.
\end{rem}

\begin{defn}\label{defn:strong trnudim}
For a compact metric space $X$ and an ordinal number $\alpha$, we say that the \emph{strong transfinite nuclear dimension} of $C(X)$ is at most $\alpha$, denoted by ${\rm s}$-$\trnudim~A \leq \alpha$, if for any $q \in \NN$ and any finite subset $\mathcal{F}\subseteq A$, we have $\Ord~M(\F,q) \leq \alpha$. Denote ${\rm s}$-$\trnudim~C(X) = \alpha$ if ${\rm s}$-$\trnudim~C(X) \leq \alpha$ but ${\rm s}$-$\trnudim~C(X) \leq \beta$ fails for any $\beta < \alpha$.
\end{defn}

The following is the main result of this section, which also explains the terminology.

\begin{prop}\label{relaforanyordinal}
For a compact metric space $X$, we have:
\[
\trnudim~C(X) \leq {\rm s}\text{-}\trnudim~C(X) = {\rm d}\text{-}\trdim~X.
\]
%and any ordinal $\alpha$, consider the following conditions
%\begin{itemize}
%  \item[(i)] trdim$(X)\leq \alpha$;
%  \item[(ii)] s-trdim$(X)\leq \alpha$;
%  \item[(iii)] trnudim$(C(X))\leq \alpha$;
%  \item[(iv)] $\forall $ finite $\mathcal{F}\subset C(X)$, $q\in\mathbb{N}$, Ord$M(\mathcal{F},q)\leq \alpha$.
%\end{itemize}
%Then $(i)\Leftarrow(ii)\Leftrightarrow(iv)\Rightarrow(iii)$.
\end{prop}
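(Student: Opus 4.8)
The plan is to prove the statement in two parts: the inequality $\trnudim~C(X) \leq {\rm s}\text{-}\trnudim~C(X)$ and the equality ${\rm s}\text{-}\trnudim~C(X) = {\rm d}\text{-}\trdim~X$. For the inequality, I would argue directly from the definitions of the sets $N(\F,q)$ and $M(\F,q)$. As Remark \ref{diffbetweentrdimands-trdim} points out, the \emph{only} difference is the extra condition (3) in $M(\F,q)$ (the diameter control on supports of $\phi^{(i)}(e)$) and the extra structural requirement (a) that each $F^{(i)}$ be a direct sum of copies of $\mathbb{C}$. Since $M(\F,q)$ imposes strictly more requirements on the data $(F,\psi,\phi)$ that must \emph{not} exist for $\sigma$ to lie in the set, any witnessing triple for ``$\sigma \notin M(\F,q)$'' is in particular a witnessing triple for ``$\sigma \notin N(\F,q)$''. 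Hence $N(\F,q) \subseteq M(\F,q)$ for every finite $\F$ and every $q$, and by Lemma \ref{lem:ordinal inclusion lemma} we get $\Ord~N(\F,q) \leq \Ord~M(\F,q)$, which on taking suprema over $(\F,q)$ yields the desired inequality of transfinite nuclear dimensions.

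The substantive content is the equality ${\rm s}\text{-}\trnudim~C(X) = {\rm d}\text{-}\trdim~X$, i.e. $\Ord~M(\F,q) \leq \Ord~M_{D(X)}$ taken suitably over $(\F,q)$, together with the reverse. I would prove this by exhibiting, for each of the two directions, a monotone injective map $\NN \to \NN$ converting one membership set into the other and then invoking Lemma \ref{lem:ordinal bijection lemma}. For the direction ${\rm s}\text{-}\trnudim~C(X) \leq {\rm d}\text{-}\trdim~X$, given a decomposition $\U = \U_0 \cup \cdots \cup \U_n$ with each $\U_i$ disjoint and suitably bounded, I would build the triple $(F,\psi,\phi)$ exactly as in the proof of Theorem \ref{introthm:trnudim and trasdim}, but in the commutative setting: take $F^{(i)} = \bigoplus_{U \in \U_i} \mathbb{C}$, use a partition-of-unity square root supported near each $\U_i$ to define the c.p.c.\ map $\psi^{(i)}$ and the order-zero multiplication map $\phi^{(i)}$, where the minimal projections of $F^{(i)}$ correspond to the members of $\U_i$. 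The diameter bound in condition (3) is then governed precisely by the boundedness parameter of $\U_i$, which is what makes the covering-dimension set $M_{D(X)}$ the correct target. For the reverse direction, starting from a triple witnessing $\sigma \notin M(\F,q)$, I would use the supports of $\phi^{(i)}(e)$ over the minimal projections $e \in F^{(i)}$ to manufacture a cover of $X$, using condition (3) to control diameters and using disjointness of supports within a fixed $F^{(i)}$ (which follows from the order-zero property of $\phi^{(i)}$, since orthogonal minimal projections map to orthogonal, hence disjointly supported, functions) to get the required disjoint decomposition.

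The technical heart, and the step I expect to be the main obstacle, is the reverse direction ${\rm d}\text{-}\trdim~X \leq {\rm s}\text{-}\trnudim~C(X)$: one must recover an honest open cover of $X$ with the correct disjointness and diameter properties from merely approximate data (the maps only satisfy $\|\phi \circ \psi(a) - a\| < 1/q$, not exactness). The point is that applying $\phi^{(i)}$ to the minimal projections of $F^{(i)} = \bigoplus \mathbb{C}$ gives genuinely orthogonal positive functions (order zero preserves orthogonality of the commuting minimal projections), so their supports are automatically disjoint within each $i$; condition (3) bounds their diameters; and condition (2), the approximate-identity estimate, forces these supports to actually cover $X$ up to the tolerance $1/q$, which one upgrades to a genuine cover by enlarging the supports slightly or by passing to the value $\F$ containing the constant function $1$. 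Care is needed to ensure that the family obtained is genuinely \emph{open} and that the bookkeeping between the integer $t_i$ (governing the $1/t_i$-bound) and the approximation parameter $q$ is arranged so the same monotone reindexing of $\NN$ works uniformly; choosing $\F$ to contain $1$ and perhaps a separating family of functions is the natural device to pin down the covering and diameter data.
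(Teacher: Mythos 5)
Your proposal is correct and takes essentially the same route as the paper: the first inequality from the containment $N(\F,q)\subseteq M(\F,q)$ plus Lemma \ref{lem:ordinal inclusion lemma}, and the direction ${\rm s}\text{-}\trnudim~C(X)\leq {\rm d}\text{-}\trdim~X$ from a monotone reindexing $n\mapsto n+M$ (where $1/M$ is below the Lebesgue number of a finite cover on which every $a\in\F$ oscillates by less than $1/q$), with $\psi^{(i)}$ given by point evaluations, $\phi^{(i)}$ by a subordinate partition of unity, and Lemma \ref{lem:ordinal bijection lemma} to finish. For the reverse direction the paper does exactly what you sketch, but with $\F=\{1_X\}$, $q=1$ and no reindexing at all: the open supports of the $\phi^{(i)}(e_j^{(i)})$ are pairwise disjoint within each $i$ by order zero, have diameter less than $1/t_i$ by condition (3), and cover $X$ since $\|\phi\circ\psi(1_X)-1_X\|<1$, giving $M_{D(X)}\subseteq M(\{1_X\},1)$ directly (this is your correct device of putting $1_X$ into $\F$; the alternative you float of ``enlarging the supports slightly'' should be dropped, since it would threaten both disjointness and the diameter bound).
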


\begin{proof}
By Remark \ref{diffbetweentrdimands-trdim}, it is clear that $\trnudim~C(X) \leq {\rm s}\text{-}\trnudim~C(X)$. Hence we only focus on the second equality.

To see ${\rm s}\text{-}\trnudim~C(X) \geq {\rm d}\text{-}\trdim~X$, we claim that $M_{D(X)}\subset M(\{1_X\},1)$. If not, there exists $(t_0,t_1,\cdots,t_n)\in M_{D(X)} \setminus M(\{1_X\},1)$. Hence there exists $(F,\psi,\phi)$ satisfying the conditions in Definition \ref{defn:set for s-trnudim} (with the same notation) and $\|\phi\circ \psi(1_X) - 1_X\|<1$, where $1_X$ is the constant function $1$ on $X$. For any $i=0,\cdots,n$ and $j=1,\cdots,n_i$, let
\[
U_j^{(i)} = \left\{x\in X:\phi^{(i)}(e_j^{(i)})(x)>0\right\} \quad \text{and} \quad \mathcal{U}_i=\{U_1^{(i)},\cdots,U_{n_i}^{(i)}\},
\]
where $e_j^{(i)}$ is the $j$-th minimal projection in $F^{(i)}=\bigoplus_{j=1}^{n_i}\CC$.
By assumption, each $\mathcal{U}_i$ is disjoint and $\frac{1}{t_i}$-bounded.
Moreover, $\|\phi\circ \psi(1_X) - 1_X\|<1$ implies that $\bigcup_{i=0}^n\U_i$ is a cover of $X$ and hence $(t_0,t_1,\cdots,t_n)\notin M_{D(X)}$. This leads to a contradiction.

For the converse, fixing a finite subset $\F \subseteq C(X)$ and $q\in \NN$, we need to show $\Ord~M(\F,q) \leq {\rm d}\text{-}\trdim~X$. Since $X$ is compact, there exists a finite open cover $\U$ of $X$ such that $|a(x) - a(y)| < \frac{1}{q}$ for any $a\in \F$ and $x,y\in U$ for $U \in \U$. Take $M \in \NN$ such that $\frac{1}{M}$ is less than the Lebesgue number $L$ of $\U$. Define a map $\Phi: \NN \to \NN$ by $n \mapsto n+M$.

We claim that $\Phi$ maps $M(\F,q)$ to $M_{D(X)}$. If not, there exists $\sigma=\{t_0, t_1, \cdots, t_n\} \in M(\F,q)$ such that $\Phi(\sigma)$ does not belong to $M_{D(X)}$. Hence there exists an open cover $\V$ which can be decomposed as $\V=\V_0 \cup \cdots \cup \V_n$ such that $\V_i$ is disjoint and $\frac{1}{t_i+M}$-bounded for each $i=0,1,\cdots,n$. Since $\frac{1}{t_i+M} < \frac{1}{M} < L$, then $\V$ is an open refinement of $\U$. Eliminating elements in $\V$ if necessary, we can further assume that for each $V \in \V$, there exists $x_V \in V$ while $x_V \notin V'$ for any $V' \neq V$ in $\V$. Taking a partition of unity $\{h_V: V \in \V\}$ subordinate to $\V$, then $h_V(x_V) = 1$ for $V \in \V$. For $i=0,1,\cdots, n$, denote $\V_i:=\{V^{(i)}_1, \cdots,V^{(i)}_{n_i}\}$ and $x^{(i)}_j:=x_{V^{(i)}_j}$, and we define:
\begin{itemize}
 \item $F^{(i)}=\bigoplus_{j=1}^{n_i}\mathbb{C}$;
 \item $\psi^{(i)}:C(X) \to F^{(i)}$ by $\psi^{(i)}(a):=(a(x^{(i)}_1), \cdots, a(x^{(i)}_{n_i}))$;
 \item $\phi^{(i)}: F^{(i)} \to C(X)$ by $\phi^{(i)}(e^{(i)}_j)=h_{V^{(i)}_j}$, where $e^{(i)}_j$ is the $j$-th minimal projection in $F^{(i)}=\bigoplus_{j=1}^{n_i}\mathbb{C}$.
\end{itemize}
Then $\psi=\psi^{(0)} \oplus \cdots \oplus \psi^{(n)}: C(X) \to F=F^{(0)} \oplus \cdots \oplus F^{(n)}$ is a c.p.c. map, the map $\phi=(\phi^{(i)})_{i=0}^n: F \to C(X)$ is c.p. and each $\phi^{(i)}$ is contractive and order zero. Moreover, we have:
\[
\diam \left(\supp ~\phi^{(i)}(e_j^{(i)})\right) = \diam \left(\supp~h_{V^{(i)}_j}\right) \leq \diam \left(V^{(i)}_j\right) \leq \frac{1}{t_i + M} < \frac{1}{t_i}.
\]
Also it is clear that $\|\psi^{(i)}(a)\psi^{(i)}(b)\| \leq \|ab\|$ for each $i$ and any $a,b \in C(X)$ since $\psi^{(i)}$ is a $\ast$-homomorphism. Finally, note that for each $a \in \F$ and $x\in X$ we have:
\[
|\phi\circ \psi(a)(x) - a(x)|  = \big| \sum_{V \in \V} a(x_V) h_V(x) - \sum_{V \in \V} a_V(x) h_V(x) \big| \leq \sum_{V \in \V} \left| a(x_V) - a(x) \right| \cdot h_V(x) < \frac{1}{q}.
\]
Hence we conclude that $\sigma=\{t_0, t_1, \cdots, t_n\} \notin M(\F,q)$, which is a contradiction. Finally applying Lemma \ref{lem:ordinal bijection lemma}, we finish the proof.
\end{proof}

Combining Theorem \ref{thm:trnudim and nudim}, Lemma \ref{s-trdim=dim}, Proposition \ref{relaforanyordinal} and \cite[Proposition 2.4]{WZ10}, we reach the following:

\begin{cor}\label{cor:relaforfinite}
Let $X$ be a compact metric space. Then the following holds
\[
\nudim~C(X) = \trnudim~C(X) = {\rm s}\text{-}\trnudim~C(X) = {\rm d}\text{-}\trdim~X = \dim X
\]
if any of them is finite.
\end{cor}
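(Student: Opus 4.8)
The goal is to establish the chain of equalities in Corollary \ref{cor:relaforfinite} under the hypothesis that any one of the five quantities is finite. The plan is to exploit the fact that each of the five dimension notions has already been related to the others, so it suffices to assemble the relevant ingredients and verify that finiteness propagates consistently around the chain.

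First I would record the two ``finite-value'' coincidences that are already available. By Theorem \ref{thm:trnudim and nudim}, for a natural number $n$ we have $\trnudim~C(X) \leq n$ if and only if $\nudim~C(X) \leq n$; taking the minimal such $n$ gives $\nudim~C(X) = \trnudim~C(X)$ whenever either is finite. Likewise, by Lemma \ref{s-trdim=dim} we have ${\rm d}\text{-}\trdim~X \leq n$ if and only if $\dim X \leq n$, so ${\rm d}\text{-}\trdim~X = \dim X$ whenever either is finite. The middle of the chain is handled by Proposition \ref{relaforanyordinal}, which already gives ${\rm s}\text{-}\trnudim~C(X) = {\rm d}\text{-}\trdim~X$ as an equality of (possibly transfinite) ordinals, together with the inequality $\trnudim~C(X) \leq {\rm s}\text{-}\trnudim~C(X)$. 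Finally, I would invoke \cite[Proposition 2.4]{WZ10}, which supplies $\nudim~C(X) = \dim X$ for compact metrisable $X$, giving a direct bridge between the two ends of the chain.

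The key step is then to argue that finiteness of any one entry forces finiteness of all, so that all five are natural numbers and the individual finite-value coincidences above can be applied simultaneously. I would run the argument as follows. Proposition \ref{relaforanyordinal} gives
\[
\trnudim~C(X) \leq {\rm s}\text{-}\trnudim~C(X) = {\rm d}\text{-}\trdim~X,
\]
and combining Lemma \ref{s-trdim=dim} with \cite[Proposition 2.4]{WZ10} and Theorem \ref{thm:trnudim and nudim} links the two outer quantities back through $\dim X = \nudim~C(X) = \trnudim~C(X)$ in the finite regime. Suppose some entry is finite. If $\dim X$ (equivalently ${\rm d}\text{-}\trdim~X$, by Lemma \ref{s-trdim=dim}) is finite, then ${\rm s}\text{-}\trnudim~C(X) = {\rm d}\text{-}\trdim~X$ is finite and hence so is $\trnudim~C(X)$, and by Theorem \ref{thm:trnudim and nudim} so is $\nudim~C(X)$; if instead $\trnudim~C(X)$ or $\nudim~C(X)$ is finite, then by \cite[Proposition 2.4]{WZ10} and Theorem \ref{thm:trnudim and nudim} we get $\dim X = \nudim~C(X) = \trnudim~C(X)$ finite, whence ${\rm d}\text{-}\trdim~X = \dim X$ and ${\rm s}\text{-}\trnudim~C(X)$ are finite as well. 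In every case all five are finite natural numbers.

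Once universal finiteness is secured, I would close the chain by reading off the equalities at the level of natural numbers: $\nudim~C(X) = \dim X$ by \cite[Proposition 2.4]{WZ10}, $\nudim~C(X) = \trnudim~C(X)$ by Theorem \ref{thm:trnudim and nudim}, $\dim X = {\rm d}\text{-}\trdim~X$ by Lemma \ref{s-trdim=dim}, and ${\rm d}\text{-}\trdim~X = {\rm s}\text{-}\trnudim~C(X)$ by Proposition \ref{relaforanyordinal}; the inequality $\trnudim~C(X) \leq {\rm s}\text{-}\trnudim~C(X)$ from the same proposition then becomes an equality since both equal $\dim X$. I expect the only genuinely delicate point to be the bookkeeping around transfinite versus finite values: Proposition \ref{relaforanyordinal} is an equality of ordinals that may a priori be infinite, so the crux is to verify that the hypothesis ``any one of them is finite'' really does force the shared ordinal value to be a natural number before the finite-value lemmas can be applied. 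This is precisely why the statement is hedged with ``if any of them is finite'', and I would make sure the finiteness-propagation argument does not secretly assume what it is trying to prove.
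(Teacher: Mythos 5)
Your proposal is correct and follows essentially the same route as the paper, which proves the corollary in one line by combining Theorem \ref{thm:trnudim and nudim}, Lemma \ref{s-trdim=dim}, Proposition \ref{relaforanyordinal} and \cite[Proposition 2.4]{WZ10}. Your explicit case analysis showing that finiteness of any one quantity propagates to all five is exactly the bookkeeping the paper leaves implicit, so nothing is missing and nothing is genuinely different.
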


Finally, we pose the following question which is currently unclear to the authors:
\begin{question}
For a compact metric space $X$, do we always have $\trnudim~C(X) = {\rm s}\text{-}\trnudim~C(X)$?
\end{question}

%\begin{prop}\label{relaforanyn}
%For a compact metric space $X$ and any ordinal $n\in\NN$, TAFE.
%\begin{itemize}
%  \item[(i)] trdim$(X)\leq n$;
%  \item[(ii)] s-trdim$(X)\leq n$;
%  \item[(iii)] nudim$(C(X))\leq n$;
%  \item[(iv)] trnudim$(C(X))\leq n$;
%  \item[(v)] $\forall $ finite $\mathcal{F}\subset C(X)$, $q\in\mathbb{N}$, Ord$M(\mathcal{F},q)\leq n$;
%  \item[(vi)] dim$(X)\leq n$.
%\end{itemize}
%\end{prop}
%
%\begin{proof}
%By Theorem 1.7.9 in \cite{}, Proposition 2.4 in \cite{WZ10}, Proposition \ref{s-trdim=dim} and Theorem \ref{thm:trnudim and nudim} it is obvious that $(iv)\Leftrightarrow(i)\Leftrightarrow(ii)\Leftrightarrow(iii)\Leftrightarrow(iv)$. While by definition, $(v)\Rightarrow (iv)$ is obvious, so we only need to prove $(ii)\Rightarrow(v)$, which would be covered by Proposition \ref{s-trdimgeqtrdim} below.
%\end{proof}

\bibliographystyle{plain}
\bibliography{bib_Trnudim}

\end{document}